\numberwithin{equation}{section}
\newtheorem{lemma}[equation]{Lemma}
\newtheorem{theorem}[equation]{Theorem}
\newtheorem{prop}[equation]{Proposition}
\newtheorem{cor}[equation]{Corollary}
\newenvironment{propqed}{\pushQED{\qed}\begin{prop}}{\popQED\end{prop}}
\theoremstyle{definition}
\newtheorem{defn}[equation]{Definition}
\newtheorem{eg}[equation]{Example}
\newtheorem{rk}[equation]{Remark}
\newcommand{\bC}{\mathbb C}
\newcommand{\bQ}{\mathbb Q}
\newcommand{\bR}{\mathbb R}
\newcommand{\bZ}{\mathbb Z}
\newcommand{\fa}{\mathfrak a}
\newcommand{\fb}{\mathfrak b}
\newcommand{\fc}{\mathfrak c}
\newcommand{\fH}{\mathfrak{H}}
\newcommand{\fK}{\mathfrak{K}}
\newcommand{\fX}{\mathfrak{X}}
\newcommand{\fY}{\mathfrak{Y}}
\newcommand{\mra}{\mathrm{a}}
\newcommand{\mrb}{\mathrm{b}}
\newcommand{\mrc}{\mathrm{c}}
\newcommand{\mrd}{\mathrm{d}}
\newcommand{\Aut}{\mathsf{Aut}}
\newcommand{\Diag}{\mathsf{Diag}}
\newcommand{\Ext}{\mathsf{Ext}}
\newcommand{\grph}{\mathsf{graph}}
\newcommand{\mcg}[1]{\Gamma_{#1}}
\newcommand{\Hom}{\mathsf{Hom}}
\newcommand{\Homeo}{\mathsf{Homeo}}
\newcommand{\Orth}{\mathsf{O}}
\newcommand{\SL}{\mathsf{SL}}
\newcommand{\SO}{\mathsf{SO}}
\newcommand{\Sp}{\mathsf{Sp}}
\newcommand{\fsp}{\mathfrak{sp}}
\renewcommand{\leq}{\leqslant}
\renewcommand{\geq}{\geqslant}
\newcommand{\sign}{\mathsf{sign}}
\newcommand{\UU}{\mathsf{U}}
\newcommand{\Tr}{\mathsf{Tr}}
\newcommand{\braid}[6]{\xymatrix@R=15pt@!C=20pt{
  #1 \ar[dr] \ar@/^1.6pc/[rr] && #2 \ar[dr] \ar@/^1.6pc/[rr] && #3 \\
  & #4 \ar[ur] \ar[dr] && #5 \ar[ur] \\ && #6 \ar[ur]}}
\newcommand{\bbZ}[1]{\mathbb Z/{#1}}
 \newcommand{\braidtwo}[6]{\xymatrix@R=15pt@!C=20pt{
  #1 \ar[dr] \ar@/^1.6pc/[rr] && #2 \ar[dr] &\\
  & #3 \ar[ur] \ar[dr] && #4  \\
  #5 \ar[ur] \ar@/_1.6pc/[rr]  && #6 \ar[ur] &}}
\journal{Journal of Pure and Applied Algebra}
\begin{document}

\begin{frontmatter}



\title{Signature cocycles on the mapping class group and symplectic groups}


\author{Dave Benson}
\address{Institute of Mathematics, University of Aberdeen, Aberdeen
  AB24 3UE, Scotland, United Kingdom}
\author{Caterina Campagnolo}
\address{Department of Mathematics, Karlsruhe Institute of Technology,
D-76128 Karlsruhe, Germany}
\author{Andrew Ranicki}
\address{School of Mathematics, University of Edinburgh, Edinburgh EH9
  3FD, Scotland, United Kingdom}
\author{Carmen Rovi}
\address{Excellence Cluster ``Structures", University of Heidelberg, D-69120 Heidelberg, Germany}

\begin{abstract}
Werner Meyer constructed a cocycle in $H^2(\Sp(2g, \bZ);\bZ)$ which computes the signature of a closed oriented surface bundle over a surface. By studying properties of this cocycle, he also showed that the signature of such a surface bundle 
is a multiple of $4$. In this paper, we study signature cocycles both from the geometric and algebraic points of view. We present geometric constructions which are relevant to the signature cocycle and provide an alternative to Meyer's decomposition of a surface bundle. Furthermore, we discuss the precise relation between the Meyer and Wall-Maslov index. The main theorem of the paper, Theorem \ref{th:main}, provides the necessary group cohomology results to analyze the signature of a surface bundle modulo any integer $N$. Using these results, we are able to give a complete answer for $N = 2, 4, \textnormal{ and } 8$, and based on a theorem of Deligne, we show that this is the best we can hope for using this method.

\end{abstract}



\begin{keyword} 
Cocycles, signature, fibre bundle, symplectic group


\MSC 20J06 (primary)~;~ 55R10, 20C33 (secondary)

\end{keyword}

\end{frontmatter}






\section*{Introduction}

Given an oriented $4$-manifold $M$ with boundary, let $\sigma(M)\in \bZ$ be the
signature\index{signature} of $M$. As usual, let $\Sigma_g$ be the standard closed oriented
surface of genus $g$. For the total space $E$ of a 
surface bundle $\Sigma_g \to E \to \Sigma_h$,
it is known from the work of Meyer \cite{Meyer:1973a} that 
$\sigma(E)$ is determined by a cohomology class 
$[\tau_g]\in H^2(\Sp(2g,\bZ);\bZ)$, 
and that both $\sigma(E)$ and $[\tau_g]$ are divisible by four. This raises the question of further divisibility by other multiples of two.

Indeed, the higher divisibility of its signature is strongly related to the monodromy of the surface bundle: it is known since the work of Chern, Hirzebruch and Serre \cite{chern-hirzebruch-serre} that trivial monodromy in $\Sp(2g, \bZ)$ implies signature $0$. Rovi \cite{Rovi:AGT} showed that monodromy in the kernel of $\Sp(2g, \bZ)\rightarrow \Sp(2g, \bZ/4)$ implies signature divisible by $8$, and very recently Benson \cite{Benson:theta} proved that the monodromy lying in the even bigger theta subgroup $\Sp^q(2g, \bZ)$ implies signature divisible by $8$. This settled a special case of a conjecture by Klaus and Teichner (see the introduction of \cite{Hambleton/Korzeniewski/Ranicki:2007a}), namely that if the monodromy lies in the kernel of $\Sp(2g, \bZ)\rightarrow \Sp(2g, \bZ/2)$, the signature is divisible by $8$. This result also follows by work of Galatius and Randal-Williams \cite{galatius-randal-williams}, by completely different methods.

It is interesting to recall that the signature of a $4k$-dimensional compact oriented hyperbolic manifold is equal to $0$, and the signature of a $4$-dimensional smooth spin closed manifolds is divisible by $16$.


In a first paper \cite{BensonCampagnoloRanickiRovi:2017a}, we constructed a cohomology class in the second cohomology group of a finite quotient $\fH$ of $\Sp(2g, \bZ)$ that computes the mod 2 reduction of the signature divided by $4$ for a surface bundle over a surface.

The main results of the present paper are contained in Theorem \ref{th:main}. It consists of an extensive study of the inflations and restrictions of the Meyer class to various quotients of the symplectic group. This provides the necessary group cohomology results to analyze the signature of a surface bundle modulo any integer $N$. Its upshot can be summarized in the diagram below, valid for $g\geq 4$, which illustrates the connection between Meyer's cohomology class, on the bottom
right, and the central extension $\tilde \fH$ of $\fH$ regarded as a cohomology class, on the top left. The group $\fH$ is the smallest quotient of the symplectic group that contains the cohomological information about the signature modulo $8$ of a surface bundle over a surface.

\medskip
\[ \xymatrix@=8mm{
\bZ/2=H^2(\fH;\bZ/2) \ar[r]^\cong \ar[d]^\cong &
H^2(\fH;\bZ/8)=\bZ/2 \ar[d]^\cong \\
\bZ/2=H^2(\Sp(2g,\bZ/4);\bZ/2) \ar[r]^\cong \ar[d]^\cong &
H^2(\Sp(2g,\bZ/4);\bZ/8)=\bZ/2 \ar[d]^4 \\
\bZ/2=H^2(\Sp(2g,\bZ);\bZ/2) \ar[r]^4 &
H^2(\Sp(2g,\bZ);\bZ/8)=\bZ/8 \\
\bZ=H^2(\Sp(2g,\bZ);\bZ) \ar[r]^4 \ar@{->>}[u] &
H^2(\Sp(2g,\bZ);\bZ)=\bZ \ar@{->>}[u]} \]

\medskip

In these new computations, we focus on $g\geq 2$. This is a novelty with respect to \cite{BensonCampagnoloRanickiRovi:2017a}, where we only considered $g\geq 4$. Our reasons for starting with $g=2$ are threefold: when $g=1, 2$, the signature of surface bundles over surfaces is $0$ anyway \cite{Meyer:1973a}, so that we do not need these cases for our purposes. But while $g=2$ compares in a reasonable way to the higher genera, the cohomology of $\Sp(2, \bZ)$ and its quotients behave quite differently: even though part of the properties we investigate are shared by all values of $g$, the common pattern is more often lost when $g=1$. Finally, as it happens that $\Sp(2, \bZ)=\SL(2, \bZ)$, the cohomology of this group has been studied in depth in other contexts \cite{barge-ghys, beyl, kirby-melvin}.

We apply Theorem \ref{th:main} to the study of the signature modulo $N$ for surface bundles. The outcome is the following:

\medskip
\noindent \textbf{Theorem 6.7.}
Let $g\geqslant 4$ and $Q$ be a finite quotient of $\Sp(2g, \bZ)$.
If $c \in H^2(Q;\bZ/N)$ is a cohomology class such that for the monodromy $\bar\chi\colon\pi_1(\Sigma_h)\rightarrow\Sp(2g,\bZ)$ of any surface bundle $\Sigma_g\rightarrow E\rightarrow\Sigma_h$ we have $$\sigma(E) = -\langle\bar\chi^*p^*(c),[\Sigma_h]\rangle \in\bZ/N,$$
then $N=2, 4,$ or $8$. If $N=2$ or $4$, the value $\langle\bar\chi^*p^*(c),[\Sigma_h]\rangle \in\bZ/N$ is always $0$. Here $p\colon\Sp(2g,\bZ)\rightarrow Q$ denotes the quotient map.

\medskip

Furthermore, we generalize this result to arbitrary coefficient rings and show that any cohomology class on a finite quotient of the symplectic group yields at most $2$-valued information on the signature of a surface bundle over a surface since it lies in the $2$-torsion of the arbitrary coefficient ring.
This explains the choice of the study of the signature modulo $8$ by showing that it is the best information a cohomology class on a finite quotient of the symplectic group can provide.


The first sections of this paper are meant as a survey of essential notions: in particular, we review forms, signature, Novikov additivity and Wall non-additivity of the signature, and include a discussion of the relation between the Meyer cocycle and the Maslov cocycle.
Furthermore, we describe the geometric constructions relevant to the signature cocycle. In \cite{Meyer:1973a},  Meyer constructs his cocycle by decomposing the base space of the surface bundle into pairs of pants and then using Novikov additivity of the signature. Here we present an alternative construction in Figure  \ref{fig:wee-monster2}.

In Section \ref{se:lie} we explain the cohomology of Lie groups seen as discrete groups and its relationship to their usual cohomology. Specifically, in Section \ref{se:Cocycles on S1}, we study the case of the unit circle seen as a discrete group and give expressions for the restriction of the Meyer and Maslov cocycles in this setting. In Section \ref{se:Sp2gR}, we examine the homotopy type and cohomology of
$\Sp(2g,\bR)$, and use this to compare the cocycles described in
Sections \ref{se:forms} and \ref{se:Cocycles on S1}. 
Section \ref{se:proofs} contains the main results of the paper.

The paper is complemented by two appendices: \ref{app: co-homology computations} collects computations of the first and second homology and cohomology of the mapping class group, the symplectic group and some of its quotients, as will be often used in the main sections. Finally, \ref{app: biography} is a biography of W. Meyer by W. Scharlau.

\section{The signature}\label{se:forms}

In this section, we start by reviewing fundamental definitions of forms, lagrangians, and the signature. We also give a discussion of the additivity properties (Novikov additivity, Proposition \ref{Novikov}) and the non-additivity properties (Wall non-additivity, Proposition \ref{Wall-non-add}) of the signature.    
In particular we recall the definitions of Wall-Maslov index (Definition \ref{Def:Wall-Maslov}), the Meyer cocycle  (Definition \ref{df:Meyercocycle}), and the Maslov cocycle (Definition \ref{df:Maslovcocycle}).  We illustrate each of these definitions with examples and furthermore recall the connection between these three fundamental notions.

\subsection{Forms}

Let $R$ be a commutative ring. The {\it dual} of an $R$-module $V$ is the $R$-module
$$V^*~=~\Hom_R(V,R)~.$$
The {\it dual} of an $R$-module morphism $f\colon V \to W$ is the $R$-module morphism
$$f^*\colon W^* \longrightarrow V^*~;~g \longmapsto (x \longmapsto g(f(x)))~.$$
As usual, we have an isomorphism of abelian groups
$$V^* \otimes_RV^* \longrightarrow\Hom_R(V,V^*) ~;~f \otimes g \longmapsto (x \longmapsto (y \longmapsto f(x)g(y)))~.$$
If $V$ is f.g. free then so is $V^*$, and the  natural $R$-module morphism 
$$V \longrightarrow V^{**}~;~ x \longmapsto (f \longmapsto f(x))$$
is an isomorphism, in which case it will be used to identify $V=V^{**}$.

A {\it form} $(V,\fb)$ over $R$ is a f.g. free $R$-module
$V$ together with a bilinear pairing $\fb\colon V \times V \rightarrow R$, or equivalently the $R$-module morphism
$$\fb\colon V \longrightarrow V^*~;~ x \longmapsto (y \longmapsto \fb(x,y))~.$$
A {\it morphism} of forms $f\colon (V,\fb) \to (W,\fc)$ over $R$ is an $R$-module
morphism $f\colon V \to W$ such that 
$$f^*\fc f~=~\fb\colon V \longrightarrow V^*$$
or equivalently
$$\fc(f(x),f(y))~=~\fb(x,y) \in R~(x,y \in V)~.$$
\indent
For $\epsilon=\pm 1$ a bilinear form $(V,\fb)$ is {\it $\epsilon$-symmetric} if 
$$\fb(y,x)~=~ \epsilon \fb(x,y) \in R ~(x,y \in V)~,$$ 
or equivalently $\epsilon \fb^*=\fb \in {\rm Hom}_R(V,V^*)$.
For $\epsilon=1$ the form is {\it symmetric}; for $\epsilon=-1$ the form is {\it symplectic}.

Given a form $(V,\fb)$ over $R$ the {\it orthogonal} of a submodule $L \subseteq V$ is the submodule
$$L^{\perp}~=~\{x \in V \,\vert\,\fb(x,y)=0 \in R~\hbox{for all}~y \in L\}~.$$
The {\it radical} of $(V,\fb)$ is the orthogonal of $V$
$$V^{\perp}~=~\{x \in V \,\vert\,\fb(x,y)=0 \in R~\hbox{for all}~y \in V\}~.$$
The form $(V,\fb)$ is {\it nonsingular} if the $R$-module morphism
$$\fb\colon V \longrightarrow V^*~;~ x \longmapsto (y \longmapsto \fb(x,y))$$
is an isomorphism, in which case $V^{\perp}=0$.

A  {\it lagrangian} of a nonsingular form $(V,\fb)$ is a f.g. free direct summand
$L \subset V$ such that $L^{\perp}=L$, or equivalently such that the sequence 
$$\xymatrix{0 \ar[r] & L \ar[r]^-{j} & V \ar[r]^-{j^*\fb} & L^* \ar[r] & 0}$$
is exact, with $j\colon L \to V$ the inclusion. 

\begin{propqed} 
For any nonsingular $\epsilon$-symmetric form $(V,\fb)$ over $R$
there is defined a lagrangian of $(V,\fb) \oplus (V,-\fb)$
$$\Delta~=~\{(x,x) \in V \oplus V\,\vert\, x\in V\}~.$$
For any $\alpha \in {\rm Aut}(V,\fb)$ the image of the diagonal lagrangian under the automorphism 
$$1 \oplus \alpha\colon (V,\fb) \oplus (V,-\fb) \longrightarrow (V,\fb) \oplus (V,-\fb)$$
is a lagrangian of $(V,\fb) \oplus (V,-\fb)$
\begin{equation*}
\grph(\alpha)~=~(1\oplus \alpha)(\Delta)~=~\{(x,\alpha(x))\,\vert\,x \in V\} \subseteq V \oplus V~.
\qedhere
\end{equation*}
\end{propqed}

\begin{eg} (i) The intersection form of a $2n$-dimensional manifold with boundary $(M,\partial M)$ is the $(-1)^n$-symmetric form
$(H^n(M,\partial M),\fb_M)$ over $\bR$
$$\fb_M\colon H^n(M,\partial M;\bR) \times H^n(M,\partial M;\bR) \longrightarrow \bR~;~(x,y) \longmapsto \langle x \cup y,[M] \rangle$$
with radical
$$H^n(M,\partial M;\bR)^{\perp}=\mathsf{ker}\left(H^n(M,\partial M;\bR) \longrightarrow H^n(M;\bR)\right).$$
If $M$ is closed then $(H^n(M;\bR),\fb_M)$ is nonsingular.\\
(ii) If $(N,\partial N)$ is a $(2n+1)$-dimensional manifold with boundary then
$$\mathsf{ker}\left(H^n(\partial N;\bR) \longrightarrow H^n(N;\bR)\right) \subseteq H^n(\partial N;\bR)$$
is a lagrangian of $(H^n(\partial N;\bR),\fb_{\partial N})$.
\end{eg}

%

\subsection{Signature}

\begin{defn}The {\it signature} of a symmetric form $(V,\fb)$ over $\bR$ is defined as usual by
$$\sigma(V,\fb)~=~{\rm dim}_\bR V_+ - {\rm dim}_\bR V_- \in \bZ$$
for any decomposition
$$(V,\fb)~=~(V_+,\fb_+) \oplus (V_-,\fb_-) \oplus (V_0,0)$$
with $(V_+,\fb_+)$ (resp. $(V_-,\fb_-)$) positive (resp. negative) definite.
\end{defn}

\begin{defn} The {\it signature} of a $4k$-dimensional manifold with boundary $(W,\partial W)$ is
$$\sigma(W) ~=~ \sigma(H^2(W,\partial W;\bR),\fb_W)\in\bZ~.$$
\end{defn}

We shall only be concerned with the case $k=1$.

\begin{propqed}\label{Novikov} {\rm (Novikov additivity of the signature)}\label{prop:novikovadditivity}\\
The signature of the union $W_1 \cup W_2$ of $4k$-dimensional manifolds with boundary along the whole boundary components is
\begin{equation*}
\sigma(W_1 \cup W_2)~=~\sigma(W_1)+\sigma(W_2)~.
\qedhere
\end{equation*}
\end{propqed}

The following construction is central to the computation of the signature of singular symmetric forms over $\bR$,
such as arise from 4-dimensional manifolds with boundary.

\begin{prop} {\rm  (Wall \cite{Wall:1969a})}\label{le:wall}
Let $(V,\fb)$ be a nonsingular symplectic form over $R$, and let
$j_1\colon L_1 \to V$, $j_2\colon L_2 \to V$, $j_3\colon L_3 \to V$ be the inclusions of three
lagrangians such that the $R$-module
$$\begin{array}{ll}
\Delta&=~ \Delta(L_1,L_2,L_3)~=~ \{(a,b,c)\in L_1\oplus L_2 \oplus L_3 \mid a+b+c=0\in V\}\\[1ex]
&=~\mathsf{ker}\left((j_1~j_2~j_3)\colon L_1 \oplus L_2 \oplus L_3 \longrightarrow V\right)
\end{array}$$
is f.g. free.\\
{\rm (i)} The form $(\Delta,\fa)$ over $R$ defined by
$$\begin{array}{l}
\fa~=~\begin{pmatrix} 0 & j_1^*\fb j_2 & 0 \\ 0 & 0 & 0 \\ 0 & 0 & 0 \end{pmatrix}\colon \Delta \times \Delta \longrightarrow R~;\\[1ex] 
\hskip100pt \left((a,b,c),(a',b',c')\right) \longmapsto \fb(b,a') = -\fb(c,a') = \fb(c,b')~, 
\end{array}$$
is symmetric, with radical
$$\Delta^{\perp}~=~\dfrac{(L_1 \cap L_2) \oplus (L_2 \cap L_3) \oplus (L_3 \cap L_1)}{L_1 \cap L_2 \cap L_3} \subseteq \Delta~.$$
{\rm (ii)}  The symmetric form
$$(\Delta',\fa')~=~\left(\mathsf{ker}\left((j_3^*\fb j_1~j_3^*\fb j_2)\colon L_1 \oplus L_2 \longrightarrow L_3^*\right),\begin{pmatrix} 0 &j_1^* \fb j_2\\
0 & 0 \end{pmatrix}\right)$$
is such that there is defined an isomorphism $f\colon(\Delta,\fa) \cong (\Delta',\fa')$ with
$$f\colon\Delta \longrightarrow\Delta'~=\{(a,b) \in L_1 \oplus L_2\,\vert\,a+b \in L_3\}~;~(a,b,c) \longmapsto (a,b)~.$$
{\rm (iii)} If $j_3^*\fb j_2\colon L_2 \to L^*_3$ is an isomorphism there is defined
an isomorphism of symmetric forms
$$\begin{pmatrix} 
1 \\ -(j_3^*\fb j_2)^{-1}(j_3^*\fb j_1)\end{pmatrix}\colon
\left(L_1,-(j_1^*\fb j_2)(j_3^*\fb j_2)^{-1}(j_3^*\fb j_1) \right)
\xymatrix{\ar[r]^-{\cong}&} (\Delta',\fa')~.
$$
\end{prop}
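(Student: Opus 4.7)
The proof splits naturally into the three parts. For \textbf{(i)}, I would first check that the three expressions $\fb(b,a')=-\fb(c,a')=\fb(c,b')$ for $\fa$ agree on $\Delta$: from $a+b+c=0$ we get $\fb(b,a')+\fb(c,a')=-\fb(a,a')$, which vanishes since $L_1$ is a lagrangian, and the identity $a'+b'+c'=0$ together with $L_3$ lagrangian gives $-\fb(c,a')=\fb(c,b')$. Symmetry of $\fa$ then reduces, by antisymmetry of $\fb$, to showing $\fb(b,a')+\fb(a,b')=0$; rewriting $a=-b-c$ and invoking $L_2$ lagrangian converts the second summand to $-\fb(c,b')$, which cancels with $\fb(b,a')$ by the identity just proved. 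For the radical, I would observe that the subset of $L_1$ appearing as the first coordinate of a triple in $\Delta$ is $L_1\cap(L_2+L_3)$, so that $(a,b,c)\in\Delta^\perp$ forces $j_2(b)$ to annihilate this submodule; cycling the three equivalent expressions of $\fa$ similarly controls $a$ and $c$. A direct check then shows that the map $(x,y,z)\mapsto(x-z,\,-x+y,\,-y+z)$ sends $(L_1\cap L_2)\oplus(L_2\cap L_3)\oplus(L_3\cap L_1)$ into $\Delta^\perp$, exhausts it, and has kernel exactly the diagonal copy of $L_1\cap L_2\cap L_3$.

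For \textbf{(ii)}, the projection $f\colon(a,b,c)\mapsto(a,b)$ is bijective onto its image because $c=-(a+b)$ is determined; membership of $(a,b,c)\in\Delta$ with $c\in L_3$ is precisely the condition $j_1(a)+j_2(b)\in L_3^\perp=L_3$, which unpacks to $(j_3^*\fb j_1)(a)+(j_3^*\fb j_2)(b)=0$, the defining condition of $\Delta'$. Reading off the matrix formula for $\fa'$ gives $\fa'((a,b),(a',b'))=\fb(b,a')$, which agrees with $\fa$ under $f$.

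For \textbf{(iii)}, when $j_3^*\fb j_2$ is invertible the defining equation of $\Delta'$ is uniquely solved by $b=-(j_3^*\fb j_2)^{-1}(j_3^*\fb j_1)(a)$, so the given column matrix is an isomorphism $L_1\to\Delta'$. Substituting this expression for $b$ into $\fa'((a,b),(a',b'))=\fb(b,a')=(j_1^*\fb j_2)(b)(a')$ pulls the form back to $-(j_1^*\fb j_2)(j_3^*\fb j_2)^{-1}(j_3^*\fb j_1)\colon L_1\to L_1^*$, which is exactly the claimed form.

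The main obstacle is the radical identification in (i): the cyclic bookkeeping across the three coordinates together with the lagrangian conditions, and the verification that the surjection from the direct sum of pairwise intersections onto $\Delta^\perp$ has kernel precisely $L_1\cap L_2\cap L_3$, are the only genuinely nontrivial steps. Parts (ii) and (iii) are then essentially formal consequences of the matrix conventions set up in the statement.
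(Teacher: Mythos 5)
The paper does not actually reprove this proposition: its proof of part (i) is the citation \emph{``See Wall \cite{Wall:1969a}''} and parts (ii) and (iii) are declared immediate from (i). Your proposal therefore goes further than the paper in actually sketching the argument, and it follows the same route that Wall's paper takes. Your verifications that the three expressions for $\fa$ agree on $\Delta$, that $\fa$ is symmetric, that the inverse of the projection $f$ is $(a,b)\mapsto(a,b,-(a+b))$ with $a+b\in L_3=L_3^\perp$ unpacking to the condition defining $\Delta'$, and that substituting $b=-(j_3^*\fb j_2)^{-1}(j_3^*\fb j_1)(a)$ into $\fa'$ yields $-(j_1^*\fb j_2)(j_3^*\fb j_2)^{-1}(j_3^*\fb j_1)$ are all correct.

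The one place where you gesture rather than prove is the surjectivity of $(x,y,z)\mapsto(x-z,-x+y,-y+z)$ onto $\Delta^\perp$, which you call a ``direct check'' but is in fact the only nontrivial content of Wall's lemma (you do flag this correctly at the end). The missing ingredient is the identity $(L_2 \cap (L_1+L_3))^\perp = L_2 + (L_1\cap L_3)$ (valid by nonsingularity and finite generation), from which one gets $a\in L_1\cap\bigl(L_2+(L_1\cap L_3)\bigr)=(L_1\cap L_2)+(L_1\cap L_3)$, and then choosing $x\in L_1\cap L_2$, $z\in L_3\cap L_1$ with $a=x-z$, the element $(0,b+x,c-z)$ still lies in $\Delta^\perp$ with first coordinate zero, forcing $b+x=-(c-z)\in L_2\cap L_3$; taking $y=b+x$ completes the surjection. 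With this filled in, your proof is complete and stands independently of the citation the paper relies on.
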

\begin{proof} (i) See Wall \cite{Wall:1969a}.\\
(ii)+(iii) Immediate from (i).
\end{proof}

\begin{rk} The construction appeared independently later in the work of Leray \cite{Leray:1981a}, clarifying earlier work of Maslov \cite{Maslov:1965a}.
\end{rk}

\begin{defn}\label{Def:Wall-Maslov} {\rm (Maslov \cite{Maslov:1965a}, Wall \cite{Wall:1969a}, Leray \cite{Leray:1981a})} 
 The {\it Wall--Maslov index} for any configuration of three lagrangians $L_1,L_2,L_3$ of a nonsingular symplectic form $(V,\fb)$ over $\bR$ is defined to be the signature
 $$\tau(L_1,L_2,L_3)~=~\sigma(\Delta(L_1,L_2,L_3),\fa) \in \bZ~$$
 with $\fa$ as in Proposition \ref{le:wall}.
%
\end{defn}

\begin{propqed} \label{Wall-non-add} {\rm  (Wall non-additivity of the signature \cite{Wall:1969a})}\label{prop:nonadditivity}
Let $(W,\partial W)$ be a 4-dimensional manifold with boundary which is a union of three codimension 0 submanifolds
with boundary $(W_i,\partial_+ W_i \sqcup \partial_- W_i)$, $i=1,2,3$,
$$(W,\partial W)~=~(W_1 \cup W_2 \cup W_3,\partial_-W_1\sqcup \partial_-W_2 \sqcup \partial_-W_3)$$
such that $W_1 \cap W_2$, $W_2 \cap W_3$ and $W_3 \cap W_1$ are three 3-manifolds with the same boundary surface
$$W_1 \cap W_2 \cap W_3~=~\partial (W_1 \cap W_2)~=~\partial (W_2 \cap W_3)~=~\partial (W_3 \cap W_1)~=~\Sigma~.$$
The signature of $(W,\partial W)$ is  
$$\sigma(W)~=~\sigma(W_1)+\sigma(W_2)+\sigma(W_3)-\sigma(\Delta(L_1,L_2,L_3)) \in \bZ$$
with  the three lagrangians of $(H^1(\Sigma;\bR),\fb_{\Sigma})$
\begin{align*}
L_1~=~\mathsf{ker}\left(H^1(\Sigma;\bR) \longrightarrow H^1(W_2 \cap W_3;\bR)\right),\\
L_2~=~\mathsf{ker}\left(H^1(\Sigma;\bR) \longrightarrow H^1(W_3 \cap W_1;\bR)\right),\\
L_3~=~\mathsf{ker}\left(H^1(\Sigma;\bR) \longrightarrow H^1(W_1 \cap W_2;\bR)\right). & \qedhere
\end{align*}


\end{propqed}      
\begin{figure}
\labellist
\small\hair 2pt
\pinlabel \small {\textcolor{red}{$\partial_{-}W_1$}} at 18 133
\pinlabel \small {\textcolor{red}{$\partial_{-}W_2$}} at 207 133
\pinlabel \small {\textcolor{red}{$\partial_{-}W_3$}} at 115 7
\pinlabel \small {$W_1$} at 67 111
\pinlabel \small {$W_2$} at 153 111
\pinlabel \small {$W_3$} at 117 40
\pinlabel \small {$\Sigma$} at 115 78
\endlabellist
\centering
\includegraphics[scale=0.75]{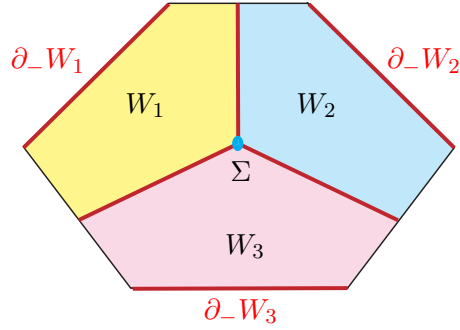}
\caption{The union $W_1 \cup W_2 \cup W_3$}
\label{fig:Maslov}
\end{figure}
\begin{rk}
The Novikov additivity of the signature is the special case of the Wall non-additivity of the signature when the glueing is  done along the whole boundary.
\end{rk}
\begin{defn}\label{twisted double}\label{thickening}
{\rm (i)} Let $N_1,N_2$ be two 3-manifolds with the same boundary
$$\partial N_1~=~\partial N_2~=~\Sigma_g~,$$
i.e. such that there are given embeddings $i_j\colon\Sigma_g \to N_j$ $(j=1,2)$ with $i_j(\Sigma_g)=\partial N_j$. The {\it twisted double} is the closed
3-dimensional manifold
$$D(N_1,N_2,i_1,i_2)~=~(N_1 \sqcup N_2)/
\{i_1(x) \sim i_2(x)\,\vert\, x \in \Sigma_g\}~.$$
{\rm (ii)} Let $N_1,N_2,N_3$ be three 3-manifolds with the same boundary
$$\partial N_1~=~\partial N_2~=~\partial N_2~=~\Sigma~,$$
i.e. such that there are given embeddings $i_j\colon\Sigma \to N_j$ $(j=1,2,3)$ with $i_j(\Sigma)=\partial N_j$. 
The {\it thickening} of the stratified set
$$N_1 \cup_{\Sigma} N_2 \cup_{\Sigma} N_3~=~(N_1 \sqcup N_2 \sqcup N_3)/
\{i_1(x) \sim i_2(x) \sim i_3(x)\,\vert\, x \in \Sigma\}$$
is the 4-dimensional manifold with boundary
$$\begin{array}{l}
\left(W(N_1,N_2,N_3,\Sigma),\partial W(N_1,N_2,N_3,\Sigma)\right)\\[1ex]
=~\left(D(N_1,N_2,i_1,i_2) \times I  \cup D(N_2,N_3,i_2,i_3) \times I \cup D(N_3,N_1,i_3,i_1) \times I,\right.\\[1ex]
\hskip150pt
\left. D(N_1,N_2,i_1,i_2)\sqcup D(N_2,N_3,i_2,i_3)\sqcup D(N_3,N_1,i_3,i_1)\right)~.
\end{array}$$

\begin{figure}[!ht]
\labellist
\small\hair 2pt
\pinlabel \small {$\Sigma$} at 290 173
\pinlabel \small {$N_1$} at 290 453
\pinlabel \small {$N_2$} at 520 59
\pinlabel \small {$N_3$} at 52 59
\pinlabel \small {$N_3 \cup N_1$} at 100 290
\pinlabel \small {\rotatebox{45}{$(N_3 \cup N_1) \times I$}} at 183 216
\pinlabel \small {$N_1 \cup N_2$} at 462 290
\pinlabel \small {\rotatebox{-45}{$(N_1 \cup N_2) \times I$}} at 375 216
\pinlabel \small {$N_2 \cup N_3$} at 290 36
\pinlabel \small {$(N_2 \cup N_3) \times I$} at 290 121
\endlabellist
\centering
\includegraphics[scale=0.35]{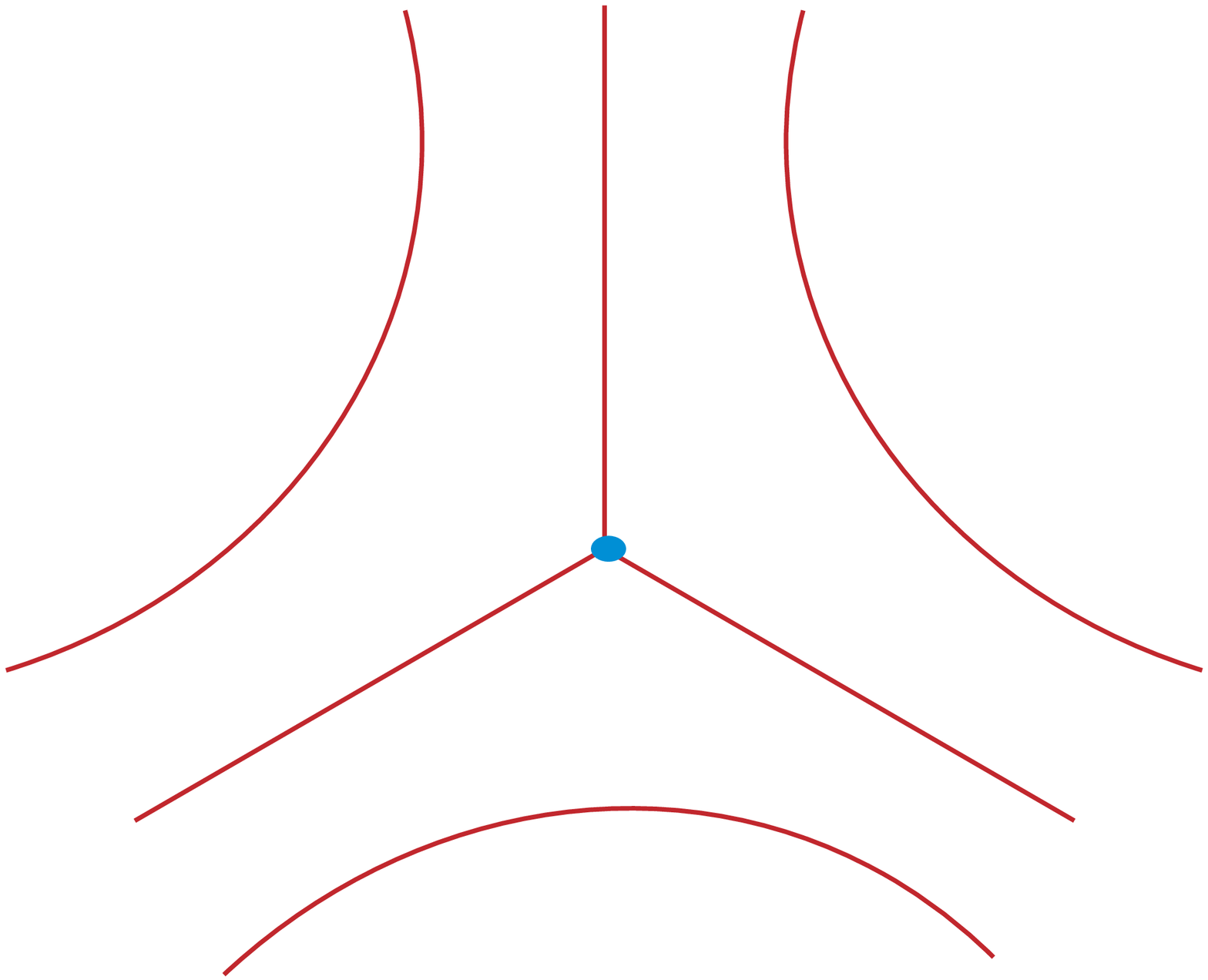}
\caption{ The thickening of $N_1 \cup_{\Sigma} N_2 \cup_{\Sigma} N_3$}
\label{fig:trisection}
\end{figure}
\end{defn}

\begin{prop} As in Definition \ref{thickening} (ii), let $N_1,N_2,N_3$ be  3-dimensional manifolds with the same boundary
$$\partial N_1~=~\partial N_2~=~\partial N_3~=~\Sigma~.$$
The nonsingular symplectic intersection form over $\bR$
$$(V,\fb)~=~\left(H^1(\Sigma;\bR),\fb_\Sigma\right)$$
has three lagrangians
$$L_i~=~\mathsf{Im}\left(H^1(N_i;\bR)\longrightarrow H^1(\Sigma;\bR)\right)~(i=1,2,3),$$ 
such that the signature of the thickening $W(N_1,N_2,N_3,\Sigma)$ is given by
$$\sigma\left(W(N_1,N_2,N_3,\Sigma)\right)~=~-\tau(L_1,L_2,L_3) \in \bZ~.$$
\end{prop}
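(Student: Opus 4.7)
The strategy is to apply Wall non-additivity (Proposition \ref{Wall-non-add}) to the natural decomposition of the thickening into three cylinders, and to match the resulting Wall lagrangians with the $L_i$ via Poincar\'e--Lefschetz duality.

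First I would verify that each $L_i = \mathsf{Im}(H^1(N_i;\bR) \to H^1(\Sigma;\bR))$ is a lagrangian of $(H^1(\Sigma;\bR),\fb_\Sigma)$. Isotropy is immediate: if $\alpha = i^*\tilde\alpha$ and $\beta = i^*\tilde\beta$ for $\tilde\alpha,\tilde\beta \in H^1(N_i;\bR)$ and $i\colon\Sigma \hookrightarrow N_i$ the inclusion, then
$$\fb_\Sigma(\alpha,\beta) = \langle i^*(\tilde\alpha\cup\tilde\beta),[\Sigma]\rangle = \langle \tilde\alpha\cup\tilde\beta, i_*[\Sigma]\rangle = 0,$$
since $i_*[\Sigma] = 0 \in H_2(N_i;\bR)$ as $\Sigma$ bounds $N_i$. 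For the dimension count I would use Poincar\'e--Lefschetz duality $H^k(N_i,\Sigma;\bR) \cong H_{3-k}(N_i;\bR)$ together with the long exact sequence of the pair $(N_i,\Sigma)$ and the Euler characteristic relation $\chi(N_i) = \chi(\Sigma)/2 = 1-g$; chasing ranks through the sequence yields $\dim L_i = g = \frac{1}{2}\dim H^1(\Sigma;\bR)$. The same duality identifies
$$L_i = \mathsf{Im}\bigl(H^1(N_i)\to H^1(\Sigma)\bigr) = \ker\bigl(H^1(\Sigma)\to H^2(N_i,\Sigma)\bigr) = \ker\bigl(H^1(\Sigma)\to H^1(N_i)\bigr),$$
matching the ``kernel'' description of lagrangians used in Proposition \ref{Wall-non-add}.

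Next I would apply Wall non-additivity to the decomposition $W = W_{12}\cup W_{23}\cup W_{31}$ with $W_{ij} := D(N_i,N_j,i_i,i_j) \times I$. The pairwise intersections inside $W$ are the closed 3-manifolds $W_{12}\cap W_{23} = N_2$, $W_{23}\cap W_{31} = N_3$, $W_{31}\cap W_{12} = N_1$, with triple intersection $\Sigma$, fitting the hypotheses of Proposition \ref{Wall-non-add}. Each cylinder $W_{ij}$ admits the orientation-reversing self-diffeomorphism $(x,t)\mapsto(x,1-t)$, which forces $\sigma(W_{ij})=0$. Proposition \ref{Wall-non-add} then yields
$$\sigma(W) = 0+0+0 - \sigma\bigl(\Delta(\tilde L_1,\tilde L_2,\tilde L_3)\bigr) = -\tau(\tilde L_1,\tilde L_2,\tilde L_3),$$
where, by tracing the labelling, $(\tilde L_1,\tilde L_2,\tilde L_3) = (L_3,L_1,L_2)$, a cyclic permutation of $(L_1,L_2,L_3)$. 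Cyclic invariance of the Wall--Maslov index (immediate from the cyclic symmetry of the form $\fa$ in Proposition \ref{le:wall}) gives $\tau(L_3,L_1,L_2) = \tau(L_1,L_2,L_3)$, completing the argument.

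The main delicate point is the first step: establishing that under Poincar\'e--Lefschetz duality the two natural ``halves'' of $H^1(\Sigma;\bR)$ associated with $N_i$ --- the image of $H^1(N_i)$ and the kernel of the map into $H^1(N_i)$ --- coincide, so that the lagrangians appearing in Wall's formula are exactly the $L_i$ up to cyclic permutation. Once this ``half-lives, half-dies'' identification is in place, Wall non-additivity and the vanishing $\sigma(W_{ij})=0$ give the stated formula essentially for free.
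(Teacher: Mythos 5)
Your proof is correct, and it is the same approach as the paper's: the paper's own proof consists of a single line citing Wall non-additivity (Proposition \ref{prop:nonadditivity}), leaving all details implicit. You have correctly reconstructed those details — the decomposition $W=W_{12}\cup W_{23}\cup W_{31}$ with pairwise intersections the $N_i$, the vanishing of $\sigma(W_{ij})$ because each $W_{ij}$ is a product with an interval, the Poincar\'e--Lefschetz identification of $\mathsf{Im}(H^1(N_i)\to H^1(\Sigma))$ with the kernel form of the lagrangian appearing in Wall's statement (the ``half-lives, half-dies'' lemma), and the fact that the cyclic relabelling $(L_3,L_1,L_2)$ does not change the Wall--Maslov index since $\tau$ is alternating. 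The one slightly awkward step is the chain of equalities $\mathsf{Im}(H^1(N_i)\to H^1(\Sigma))=\ker(H^1(\Sigma)\to H^2(N_i,\Sigma))=\ker(H^1(\Sigma)\to H^1(N_i))$: the last expression has no literal meaning since there is no natural cohomology map $H^1(\Sigma)\to H^1(N_i)$, and the cleanest route is to phrase the kernel description in homology as $\ker(H_1(\Sigma)\to H_1(N_i))$ and transport it to $H^1(\Sigma)$ via Poincar\'e duality of $\Sigma$. But the paper's own statement of Proposition \ref{Wall-non-add} suffers from the same abuse of notation, so this is not a gap in your argument relative to the paper's level of rigor.
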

\begin{proof}
This is a special case of Proposition \ref{prop:nonadditivity}.
\end{proof}

We shall use the group inclusion
$$S^1 \xymatrix{\ar[r]^-{\displaystyle{\cong}}&} \SO(2)\subset \Sp(2,\bR)~~;~~
e^{2\pi i a} \longmapsto \begin{pmatrix} \cos 2\pi a & -\sin 2\pi a\\
\sin 2\pi a & \cos 2\pi a \end{pmatrix}$$
as an identification.

\begin{eg}\label{eg:mu1}
Let $g=1$. The space ${\mathcal L}_1$ of the lagrangians of the nonsingular symplectic form over $\bR^2$
$$(V,\fb)~=~\left(\bR\oplus \bR, \begin{pmatrix} 0 & 1 \\ -1 & 0 \end{pmatrix}\right)$$
consists of all the 1-dimensional subspaces $L \subset \bR^2$, which are of the type 
$$L_a~=~\mathsf{Im}\left(\begin{pmatrix} \cos \pi a \\ \sin \pi a \end{pmatrix}\colon\bR \longrightarrow \bR \oplus \bR\right) \subset \bR \oplus \bR~(a \in [0,1))~,$$
with an injection
$${\mathcal L}_1 \longrightarrow \Sp(2,\bR)~~;~~L_a \longmapsto e^{2 \pi i a }=\begin{pmatrix}
\cos 2 \pi a & -\sin 2 \pi a \\
\sin 2 \pi a & \cos 2\pi a
\end{pmatrix}~.$$
For any $a_1,a_2,a_3 \in [0,1)$ the inclusions 
$$j_i~=~\begin{pmatrix} \cos \pi a_i \\ \sin \pi a_i \end{pmatrix}\colon\bR \longrightarrow \bR \oplus \bR~(i=1,2,3)$$
with images the lagrangians $j_i(\bR)=L_{a_i} \subset \bR \oplus \bR$ are such that
$$j_i^*\fb j_k~=~\sin \pi(a_k-a_i)\colon\bR \longrightarrow \bR~.$$
As in Lemma \ref{le:wall} (ii) there is defined an isomorphism of symmetric forms over $\bR$ 
$$\begin{array}{l}
f\colon\left(\Delta(L_{a_1},L_{a_2},L_{a_3}),\fa\right)\\[1ex]
=~\left(\mathsf{ker}\left(\begin{pmatrix} \cos \pi a_1 & \cos \pi a_2 & \cos\pi a_3 \\
\sin  \pi a_1 & \sin \pi a_2 & \sin \pi a_3 \end{pmatrix}\colon\bR\oplus \bR \oplus \bR \longrightarrow \bR\oplus \bR \right),
\begin{pmatrix} 0 & 1 & 0 \\ 0 & 0 & 0 \\ 0 & 0 & 0 \end{pmatrix}\right)\\[1ex]
\xymatrix{\ar[r]^-{\cong}&} (\Delta'(L_{a_1},L_{a_2},L_{a_3}),\fa')\\[1ex]
=~\left(\mathsf{ker}\left(\begin{pmatrix}
\sin \pi(a_1-a_3)& \sin\pi(a_2-a_3) \end{pmatrix}\colon\bR \oplus \bR\longrightarrow \bR\right),
\begin{pmatrix} 0 & \sin \pi(a_2-a_1) \\ 0 & 0 \end{pmatrix}\right)~. 
\end{array}$$
If $a_1,a_2,a_3$ are not all distinct then $\fb=0$, and 
$\tau(L_{a_1},L_{a_2},L_{a_3})=0$.
If $a_2 \neq a_3$ then Lemma \ref{le:wall} (iii) gives an isomorphism
$$\begin{pmatrix}
1 \\ -(j_3^*\fb j_2)^{-1}(j_3^*\fb j_1)\end{pmatrix}\colon
\left(\bR,-(j_1^*\fb j_2)(j_3^*\fb j_2)^{-1}(j_3^*\fb j_1)\right)
\xymatrix{\ar[r]^-{\cong}&} (\Delta',\fa')~.
$$
It follows that for any $a_1,a_2,a_3 \in [0,1)$ the Wall-Maslov index is
$$\begin{array}{ll}
\tau(L_{a_1},L_{a_2},L_{a_3})&=~
\sigma(\Delta (L_{a_1},L_{a_2},L_{a_3}),\fa)\\[1ex]
&=~\sigma(\Delta' (L_{a_1},L_{a_2},L_{a_3}),\fa')\\[1ex]
&=~\sigma(\bR,-\sin \pi(a_1-a_2) \sin\pi(a_2-a_3) \sin \pi(a_3-a_1))\\[1ex]
&=~-\sign(\sin \pi(a_1-a_2) \sin\pi(a_2-a_3) \sin \pi(a_3-a_1))~.
\end{array}$$
\end{eg}

\begin{lemma}\label{le:Maslov-invariant}
If $\alpha\in\Sp(2g,\bR)$ then $\tau(\alpha(L_1),\alpha(L_2),\alpha(L_3))= \tau(L_1,L_2,L_3)$.
\end{lemma}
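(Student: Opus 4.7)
The plan is to exhibit an explicit isomorphism of symmetric forms $(\Delta(L_1,L_2,L_3),\fa)\cong(\Delta(\alpha(L_1),\alpha(L_2),\alpha(L_3)),\fa)$ over $\bR$ induced by $\alpha$; since isomorphic symmetric forms have equal signatures, the Wall--Maslov indices will coincide by the very definition $\tau(L_1,L_2,L_3)=\sigma(\Delta(L_1,L_2,L_3),\fa)$.

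First I would observe that $\alpha\in\Sp(2g,\bR)$, being an automorphism of $(V,\fb)$, carries lagrangians to lagrangians, so the triple $\alpha(L_1),\alpha(L_2),\alpha(L_3)$ is again an admissible input for the construction of Proposition \ref{le:wall}. I then define
$$F\colon\Delta(L_1,L_2,L_3)\longrightarrow\Delta(\alpha(L_1),\alpha(L_2),\alpha(L_3))~;~(a,b,c)\longmapsto(\alpha(a),\alpha(b),\alpha(c))~.$$
The identity $\alpha(a)+\alpha(b)+\alpha(c)=\alpha(a+b+c)=0$ shows $F$ is well-defined, and componentwise application of $\alpha^{-1}$ gives a two-sided inverse, so $F$ is an $\bR$-linear isomorphism.

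Finally, using the formula for $\fa$ from Proposition \ref{le:wall}(i), I would compute
$$\fa(F(a,b,c),F(a',b',c'))=\fb(\alpha(b),\alpha(a'))=\fb(b,a')=\fa((a,b,c),(a',b',c'))~,$$
where the middle equality is precisely the defining property of $\Sp(2g,\bR)$. Hence $F$ is an isomorphism of symmetric forms, and taking signatures yields the desired equality $\tau(\alpha(L_1),\alpha(L_2),\alpha(L_3))=\tau(L_1,L_2,L_3)$.

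There is effectively no obstacle: the entire content is that the pair $(\Delta,\fa)$ is built functorially from $\fb$ restricted to the $L_i$, and symplectic transformations preserve $\fb$ by definition. The lemma is thus a direct functoriality statement about the Wall--Maslov index under the diagonal $\Sp(2g,\bR)$-action on ordered triples of lagrangians.
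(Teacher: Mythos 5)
Your proof is correct and is exactly the argument the paper has in mind when it says the lemma ``is clear from the definition'': the map $(a,b,c)\mapsto(\alpha(a),\alpha(b),\alpha(c))$ is an isomorphism of the symmetric forms $(\Delta,\fa)$ because $\alpha$ preserves $\fb$, so the signatures agree. You have simply spelled out the routine verification that the paper leaves to the reader.
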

\begin{proof}
This is clear from the definition.
\end{proof}

\begin{lemma}\label{le:Maslov-cocycle}
If $L_1$, $L_2$, $L_3$, $L_4$ are lagrangians in $\bR^{2g}$ then 
$$ \tau(L_1,L_2,L_3)-\tau(L_1,L_2,L_4)+\tau(L_1,L_3,L_4)-\tau(L_2,L_3,L_4)=0. $$
\end{lemma}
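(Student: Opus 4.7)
The plan is to prove the cocycle identity by giving each $\tau(L_i,L_j,L_k)$ a topological interpretation as minus the signature of a 4-dimensional thickening, and then comparing two decompositions of a single 4-manifold built from all four lagrangians.

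First I would use the $\Sp$-invariance of the Wall--Maslov index (Lemma \ref{le:Maslov-invariant}) and the classification of nonsingular symplectic forms over $\bR$ to identify $\bR^{2g}$ with $(H^1(\Sigma_g;\bR),\fb_\Sigma)$. Every lagrangian of this form can be realised as $L=\mathsf{Im}(H^1(N;\bR)\to H^1(\Sigma_g;\bR))$ for some 3-manifold $N$ with $\partial N=\Sigma_g$ (taking $N$ to be a handlebody suffices). Fix such $N_1,N_2,N_3,N_4$ for the four given lagrangians. By the proposition following Definition \ref{thickening}, the thickening $W_{ijk}$ of $N_i\cup_\Sigma N_j\cup_\Sigma N_k$ satisfies $\sigma(W_{ijk})=-\tau(L_i,L_j,L_k)$.

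Next I would build a single 4-manifold $\mathcal{W}$ admitting two natural decompositions. Concretely, one thickens the 2-complex $N_1\cup_\Sigma N_2\cup_\Sigma N_3\cup_\Sigma N_4$ by attaching the six pairwise twisted doubles $D(N_i,N_j)\times I$ according to a tetrahedral pattern of triangular faces, each of which is itself a triple thickening $W_{ijk}$. The first decomposition cuts along the diagonal piece $D(N_1,N_3)\times I$ and writes $\mathcal{W}=W_{123}\cup W_{134}$; the second cuts along $D(N_2,N_4)\times I$ and writes $\mathcal{W}=W_{124}\cup W_{234}$ (with suitable orientation reversals on two pieces to account for the alternating signs). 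Applying Novikov additivity (Proposition \ref{Novikov}) to each gives
\begin{align*}
\sigma(\mathcal{W}) &= -\tau(L_1,L_2,L_3)-\tau(L_1,L_3,L_4),\\
\sigma(\mathcal{W}) &= -\tau(L_1,L_2,L_4)-\tau(L_2,L_3,L_4),
\end{align*}
and equating the two expressions yields the cocycle identity.

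The main obstacle will be the orientation bookkeeping in the construction of $\mathcal{W}$: one has to verify that the four triple thickenings assemble with precisely the alternating signs of the statement, rather than some other sign pattern. A purely algebraic alternative is available: by $\Sp$-invariance and a small perturbation one reduces to the case where $L_1,L_2,L_3,L_4$ are pairwise transverse; then Proposition \ref{le:wall}(iii) realises each $\tau(L_i,L_j,L_k)$ as the signature of an explicit form on $L_i$ built from the pairings $j_i^*\fb j_k$, and the identity becomes a routine manipulation of four matrix signatures that can be further pinned down by Sylvester's law of inertia.
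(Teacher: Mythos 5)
The paper does not actually give a proof of this lemma; it simply cites Py \cite[Th\'eor\`eme (2.2.1)]{Py:2005a}, so your approach is necessarily different from the paper's. Unfortunately your geometric argument has a genuine gap. You assert that ``every lagrangian'' of $(H^1(\Sigma_g;\bR),\fb_\Sigma)$ is realised as $\mathsf{Im}\bigl(H^1(N;\bR)\to H^1(\Sigma_g;\bR)\bigr)$ for some $3$-manifold $N$ bounding $\Sigma_g$. This is false: the lagrangians arising this way are (up to the action of the mapping class group, i.e.\ of $\Sp(2g,\bZ)$) rational lagrangians, a countable dense subset of the Lagrangian Grassmannian, whereas the lemma is stated for arbitrary real lagrangians and is used in the paper precisely for graphs of elements of $\Sp(2g,\bR)$. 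One cannot immediately pass from the rational case to the real case by continuity or density, because the Wall--Maslov index $\tau$ is a step function and is not continuous on the space of triples; the orbit of a quadruple under $\Sp(2g,\bR)$ generically does not contain a rational representative, and the jump behaviour of $\tau$ on the singular strata is exactly what makes such an extension nontrivial.

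The same issue undermines the ``purely algebraic alternative'' you sketch at the end: reducing ``by a small perturbation'' to pairwise transverse lagrangians is not justified, because perturbing a lagrangian changes the values of $\tau$ on the triples containing it in a way governed by the discontinuities of the index, and controlling those changes is essentially equivalent to the cocycle identity one is trying to prove. A correct algebraic proof (as in Py, or in Lion--Vergne) works directly with the degenerate symmetric forms $(\Delta(L_i,L_j,L_k),\fa)$ of Proposition \ref{le:wall} without assuming transversality, comparing the four quadratic spaces inside a single auxiliary space and showing the alternating sum is metabolic. Your topological picture (a tetrahedral thickening with two Novikov decompositions) is an appealing heuristic and would, with careful orientation bookkeeping, give a genuinely geometric proof for \emph{integral} lagrangians, which is a weaker statement than the lemma; but as a proof of the lemma as stated it does not go through.
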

\begin{proof}See for example Py \cite[Th\'eor\`eme (2.2.1)]{Py:2005a}.
\end{proof}

Now suppose that the symplectic form over $\bR$
$$(\bR^{2g+2},\fb')~=~(\bR^{2g},\fb)\oplus \left(\bR \oplus \bR,\langle~,\rangle\right)$$ 
is formed from $(\bR^{2g},\fb)$ by adjoining new basis elements $v_{g+1}$ and $w_{g+1}$ with 
$\langle v_{g+1},v_{g+1}\rangle=0$, $\langle w_{g+1},w_{g+1}\rangle=0$ and $\langle v_{g+1},w_{g+1}\rangle=1$. Then given any lagrangian $L$ in $(\bR^{2g},\fb)$, 
we may form the lagrangian $\tilde L = L \oplus \bR v_{g+1}$ in $(\bR^{2g+2},\fb')$. 

\begin{lemma}\label{le:Maslov-restricts}
Setting $\tilde \Delta = \Delta(\tilde L_1,\tilde L_2,\tilde L_3)$, we have an isomorphism
$$(\tilde \Delta,\tilde\fa)~=~(\Delta,\fa)\oplus (\bR v_{g+1},0)~.$$
In particular, we have
$$ \tau(\tilde L_1,\tilde L_2,\tilde L_3) = \tau(L_1,L_2,L_3). $$
\end{lemma}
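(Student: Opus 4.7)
My plan is to exploit the orthogonal decomposition $(\bR^{2g+2},\fb') = (\bR^{2g},\fb) \oplus (\bR v_{g+1} \oplus \bR w_{g+1}, \langle\cdot,\cdot\rangle)$ to split $\tilde\Delta$ into a copy of $\Delta$ plus an extra summand on which $\tilde\fa$ is identically zero. Once the splitting is shown to be orthogonal with respect to $\tilde\fa$, the signature equality is immediate and the conclusion about the Wall--Maslov indices follows from Definition~\ref{Def:Wall-Maslov}.

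First, I would parameterize $\tilde\Delta$ coordinate-wise. Since $\tilde L_i = L_i \oplus \bR v_{g+1}$, every element of $\tilde L_1 \oplus \tilde L_2 \oplus \tilde L_3$ has the unique form $(a + \lambda_1 v_{g+1},\, b + \lambda_2 v_{g+1},\, c + \lambda_3 v_{g+1})$ with $a \in L_1$, $b \in L_2$, $c \in L_3$, and $\lambda_i \in \bR$. The defining condition that this triple sums to $0$ in $\bR^{2g+2}$ decouples, by directness of the summands, into the two independent constraints $a+b+c = 0$ in $\bR^{2g}$ and $\lambda_1+\lambda_2+\lambda_3 = 0$ in $\bR$. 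This yields the identification $\tilde\Delta \cong \Delta \oplus K$ as $\bR$-modules, where $K = \{(\lambda_1,\lambda_2,\lambda_3)\,v_{g+1} : \sum \lambda_i = 0\}$, viewed as a submodule of $\tilde L_1 \oplus \tilde L_2 \oplus \tilde L_3$.

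Second, I would check that $\tilde\fa$ respects this splitting. By Proposition~\ref{le:wall}(i), $\tilde\fa$ is computed as the single pairing $\fb'(\tilde b, \tilde a')$, and substituting the coordinate expressions for $\tilde a$, $\tilde b$ expands into $\fb(b,a')$ plus terms involving at least one $v_{g+1}$ factor. All the latter vanish, since $\fb'(v_{g+1}, v_{g+1}) = 0$ by assumption and $\fb'(v_{g+1}, x) = \fb'(x, v_{g+1}) = 0$ for $x \in \bR^{2g}$ as the two summands of $\fb'$ are orthogonal. Hence $\tilde\fa$ restricts to $\fa$ on $\Delta$, vanishes on $K$, and has zero cross terms between $\Delta$ and $K$. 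Thus $(\tilde\Delta, \tilde\fa) \cong (\Delta, \fa) \oplus (K, 0)$ as symmetric forms, and since $(K,0)$ has signature $0$ we conclude $\tau(\tilde L_1, \tilde L_2, \tilde L_3) = \tau(L_1, L_2, L_3)$.

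I do not expect any real obstacle: the proof is a short coordinate unwinding. The one subtlety worth flagging is a bookkeeping point, namely that the kernel constraint $\sum \lambda_i = 0$ gives a $2$-dimensional extra summand $K$ rather than a $1$-dimensional $\bR v_{g+1}$ as the statement's notation might suggest; since the form on $K$ vanishes in either interpretation, this discrepancy affects only the rank of the trivial summand and not the signature, so the claimed equality of Wall--Maslov indices is unaffected.
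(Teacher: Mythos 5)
Your proof is correct and follows essentially the same coordinate-unwinding approach as the paper's: parameterize $\tilde\Delta$ by writing each coordinate as an $L_i$ part plus a multiple of $v_{g+1}$, observe that the constraint decouples into the $\Delta$-constraint and $\lambda_1+\lambda_2+\lambda_3=0$, and verify that $\tilde\fa$ restricts to $\fa$ while killing all $v_{g+1}$-terms. Your final bookkeeping remark is a valid catch: the paper's own proof parameterizes the extra summand by $\{(\lambda,\mu,\nu) \in \bR^3 : \lambda+\mu+\nu=0\}$, which is $2$-dimensional, so the notation $(\bR v_{g+1},0)$ in the lemma statement is a slight imprecision about the rank of the trivial summand. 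As you correctly observe, this discrepancy does not affect the signature and hence the claimed equality $\tau(\tilde L_1,\tilde L_2,\tilde L_3)=\tau(L_1,L_2,L_3)$ is unaffected.
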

\begin{proof} The space $\tilde \Delta$ consists of vectors
$$(\tilde a,\tilde b,\tilde c)~=~ (a+\lambda v_{g+1},b+\mu v_{g+1},c+ \nu v_{g+1}) $$
with $(a,b,c)\in \Delta$ and $\lambda+\mu+\nu=0$, and the symmetric form $(\tilde \Delta,\tilde\fa)$ is given by
\begin{align*}
\tilde\fb((\tilde a,\tilde b,\tilde c),(\tilde a',\tilde b',\tilde c'))&=~
\tilde\fa(\tilde b,\tilde a')\\
&= ~\fa(b,a') ~=~\fb((a,b,c),(a',b',c'))~.
\qedhere
\end{align*}
\end{proof}

\begin{lemma}\label{le:graph} 
Let $(V,\fb)$ be a nonsingular symplectic form over $\bR$.\\
{\rm (i)} The graph of an automorphism $\alpha\colon(V,\fb) \to (V,\fb)$ is a lagrangian of
$(V \oplus V,\fb \oplus -\fb)$
$$\grph(\alpha)~=~\mathsf{Im}\left(\begin{pmatrix} 1 \\ \alpha \end{pmatrix}\colon
V \longrightarrow V \oplus V\right) \subset V \oplus V~.$$
{\rm (ii)} For any automorphisms $\alpha_i\colon(V,\fb) \to (V,\fb)$ $(i=1,2,3)$ let
$$j_i~=~\begin{pmatrix} 1 \\ \alpha_i \end{pmatrix}\colon (V,0) \longrightarrow (V \oplus V,\fb \oplus -\fb)$$
be the inclusions of the three graph lagrangians. For $i \neq k$
$$j_i^*(\fb \oplus -\fb)j_k~=~\fb-\alpha_i^* \fb \alpha_k~=~
\fb(1-\alpha_i^{-1}\alpha_k)\colon V \longrightarrow V^*~,$$
so that
$$\begin{array}{l}
(\Delta'(\grph(\alpha_1),\grph(\alpha_2),\grph(\alpha_3)),\fa')\\[1ex]
\hskip50pt
=~\left(\mathsf{ker}\left((1-\alpha_3^{-1}\alpha_1~1-\alpha_3^{-1}\alpha_2)\colon V \oplus V \longrightarrow V\right),\begin{pmatrix} 0 &\fb(1-\alpha_1^{-1} \alpha_2)\\
0 & 0 \end{pmatrix}\right)~.
\end{array}$$
If one of $1-\alpha_i^{-1} \alpha_k$ is $0$ then $\fb'=0$ and
$\tau(\grph(\alpha_1),\grph(\alpha_2),\grph(\alpha_3))=0$. If $1-\alpha^{-1}_3 \alpha_2\colon V \to V$ is an isomorphism there is defined
an isomorphism of symmetric forms 
$$\begin{array}{l}
\begin{pmatrix}
1 \\ -(\fb-\alpha_3^*\fb \alpha_2)^{-1}(\fb - \alpha_3^*\fb \alpha_1)\end{pmatrix}\colon
\left(V,-(\fb-\alpha_1^*\fb \alpha_2)(\fb-\alpha_3^*\fb \alpha_2)^{-1}(\fb-\alpha_3^*\fb \alpha_1)\right)\\[1ex]
\hskip100pt\xymatrix{\ar[r]^-{\cong}&} \left(\Delta'(\grph(\alpha_1),\grph(\alpha_2),\grph(\alpha_3)),\fb'\right)~,
\end{array}
$$
so that
$$\begin{array}{l}
\tau(\grph(\alpha_1),\grph(\alpha_2),\grph(\alpha_3))\\[1ex]
\hskip50pt=~
\tau(V,-(\fb-\alpha_1^*\fb \alpha_2)(\fb-\alpha_3^*\fb \alpha_2)^{-1}(\fb-\alpha_3^*\fb \alpha_1))\\[1ex]
\hskip50pt=~\tau(V,-\fb(1-\alpha_1^{-1}\alpha_2)(1-\alpha_3^{-1} \alpha_2)^{-1}(1-\alpha_3^{-1} \alpha_1))~.
\end{array}
$$
\end{lemma}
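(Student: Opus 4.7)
The plan is to treat Lemma \ref{le:graph} as a direct specialization of Proposition \ref{le:wall} to the configuration of three graph lagrangians in $(V \oplus V, \fb \oplus -\fb)$, so that the whole lemma becomes a matter of identifying the resulting kernels and pullback forms explicitly in terms of the automorphisms $\alpha_i$.

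For part (i), I would verify directly that $\grph(\alpha)$ is a lagrangian: it is the image of $\begin{pmatrix} 1 \\ \alpha \end{pmatrix}\colon V \to V \oplus V$, which is a direct summand of rank $\dim V$ since projection to the first factor is a left inverse. To check isotropy, compute
$$(\fb \oplus -\fb)\bigl((x,\alpha(x)),(y,\alpha(y))\bigr) = \fb(x,y) - \fb(\alpha(x),\alpha(y)) = 0,$$
using $\alpha \in \Aut(V,\fb)$. This is in fact already recorded in the propqed just before Example \ref{eg:mu1}, via $\grph(\alpha) = (1 \oplus \alpha)(\Delta)$.

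For part (ii), the key computation is $j_i^*(\fb \oplus -\fb) j_k = \fb - \alpha_i^* \fb \alpha_k$, obtained by unwinding the definitions. Since $\alpha_i \in \Aut(V,\fb)$ we have $\alpha_i^* \fb \alpha_i = \fb$, hence $\alpha_i^* \fb = \fb\alpha_i^{-1}$ and $\alpha_i^* \fb \alpha_k = \fb(1 - \alpha_i^{-1}\alpha_k)$ after rearrangement, giving the stated formula. Proposition \ref{le:wall}(ii) then describes $\Delta'$ as the kernel of $(j_3^*\fb j_1, j_3^*\fb j_2)\colon L_1 \oplus L_2 \to L_3^*$ equipped with the form $\begin{pmatrix} 0 & j_1^*\fb j_2 \\ 0 & 0 \end{pmatrix}$. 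Identifying each $L_i$ with $V$ via $j_i$ and using that $\fb\colon V \to V^*$ is an isomorphism (so that the kernel of $\fb(1-\alpha_3^{-1}\alpha_k)$ coincides with the kernel of $1 - \alpha_3^{-1}\alpha_k$), I obtain the stated description of $(\Delta',\fa')$.

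The remaining assertions follow directly: if any $1 - \alpha_i^{-1}\alpha_k$ vanishes then the corresponding $j_i^*\fb j_k$ vanishes, so $\fa' = 0$ and $\tau(\grph(\alpha_1),\grph(\alpha_2),\grph(\alpha_3)) = 0$. For the nondegenerate case, substituting the computed expressions $j_i^*\fb j_k = \fb - \alpha_i^*\fb\alpha_k$ into Proposition \ref{le:wall}(iii) produces the claimed isomorphism of $(V, -(\fb - \alpha_1^*\fb\alpha_2)(\fb - \alpha_3^*\fb\alpha_2)^{-1}(\fb - \alpha_3^*\fb\alpha_1))$ with $(\Delta', \fa')$; pushing the identity $\alpha_i^*\fb = \fb\alpha_i^{-1}$ through rewrites this as $(V, -\fb(1 - \alpha_1^{-1}\alpha_2)(1 - \alpha_3^{-1}\alpha_2)^{-1}(1 - \alpha_3^{-1}\alpha_1))$. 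Taking signatures yields the Wall--Maslov index formula. The only real obstacle is bookkeeping: keeping track of which factor of $V \oplus V$ carries $+\fb$ versus $-\fb$, and commuting $\alpha_i^*$ past $\fb$ via the symplectic relation. Once Proposition \ref{le:wall} is granted, no genuinely new ingredient is needed.
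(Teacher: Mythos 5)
Your proposal is correct and takes exactly the route the paper does: the paper's own proof of Lemma~\ref{le:graph} consists of the two lines ``(i) By construction'' and ``(ii) Apply Lemma~\ref{le:wall}'', and you are simply filling in the intermediate computations (the pullback $j_i^*(\fb\oplus -\fb)j_k = \fb - \alpha_i^*\fb\alpha_k$ and the rewriting via $\alpha_i^*\fb = \fb\alpha_i^{-1}$) that the authors leave implicit. Nothing in your argument diverges from the intended specialization of Proposition~\ref{le:wall}.
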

\begin{proof} (i) By construction.\\
(ii) Apply Lemma \ref{le:wall}.
\end{proof}

Note that the graph of any element 
$\alpha \in \Sp(2g,\bR)$ 
$$\grph(\alpha)~=~\{(x,\alpha(x))\,\vert\, x \in \bR^{2g}\} \subset \bR^{2g} \oplus \bR^{2g}$$ 
is a lagrangian of the symplectic form 
$$\left(\bR^{2g} \oplus \bR^{2g}, 
\left((x,y),(x',y')\right) \longmapsto  \langle x,x'\rangle - \langle
y,y'\rangle\right),$$
and that the signature of the double mapping torus (Section \ref{se:doublemappingtorus} below) is
$$ \sigma(T(\alpha,\beta))~=~-\tau(\grph(1),\grph(\alpha),\grph(\alpha\beta)) \in \bZ. $$

In what follows, when we write $\Sp(2g,\bR)^\delta$  we mean the same group $\Sp(2g,\bR)$, but with the discrete topology. For more details on the cohomology of Lie groups with discrete topology see Section \ref{se:lie}.

\begin{defn}\label{df:Meyercocycle}
The \emph{Meyer cocycle}\index{Meyer cocycle} $\tau_g\colon \Sp(2g,\bR)^\delta \times \Sp(2g,\bR)^\delta \to \bZ$ is defined by
$$ \tau_g(\alpha,\beta) = \tau(\grph(1),\grph(\alpha),\grph(\alpha\beta)). $$
\end{defn}

\begin{rk}
It follows from Lemma \ref{le:Maslov-cocycle} that $\tau_g$ is a cocycle on $\Sp(2g,\bR)^\delta$.
By Lemma \ref{le:Maslov-invariant} we have 
$$ \tau_g(\alpha,\beta)=\tau(\grph(\alpha^{-1}),\grph(1),\grph(\beta)). $$
So the subspace $\Delta$ consists of triples
$$ \left((x,\alpha^{-1}(x)),(-x-y,-x-y),(y,\beta(y))\right) $$
with $x,y \in \bR^{2g}$ such that 
$\alpha^{-1}(x)+\beta(y)=x+y$. Thus $\Delta$ is isomorphic to the
space 
$$ \{(x,y)\in\bR^{2g} \oplus \bR^{2g} \mid (\alpha^{-1}-1)x +(\beta-1)y =0\}, $$
with symmetric form
$$\fb\left((x,y),(x',y')\right)~=~\left\langle x+y,(1-\beta)y'\right\rangle~.$$
This is the definition found in Meyer \cite{Meyer:1973a}, except that the original cocycle was
on $\Sp(2g,\bZ)$.
\end{rk}

\begin{eg} \label{eg:tau1}
Let $(V,\fb)=\left(\bR \oplus \bR,\begin{pmatrix}  0 & 1 \\ -1 & 0 \end{pmatrix}\right)$, so that identifying $V=\bC$ we have $\fb=-i=e^{-\pi i /2}$. For $a \in [0,1)$ let
$$\begin{array}{l}
\alpha~=~e^{2\pi i a}~=~\begin{pmatrix} \cos 2 \pi a &  -\sin 2 \pi a \\
 \sin 2 \pi a &   \cos 2 \pi a  \end{pmatrix}   \in \Aut(V,\fb)=\Sp(2,\bR)~, \end{array} $$
so that
 $$ 1-\alpha~=~(2 \sin \pi a) e^{\pi i(a -1/2)}~
 =~(2 \sin \pi a) \begin{pmatrix}  \sin \pi a &
 \cos \pi a \\
 -\cos \pi a & \sin \pi a \end{pmatrix}~.$$
 (i) For any $a_1,a_2,a_3 \in [0,1)$ let $\alpha_j=e^{2\pi i a_j}$ $(j=1,2,3)$.
 By Lemma \ref{le:graph} (ii) the Wall-Maslov index of the graph lagrangians
 $\grph(\alpha_j)$ in $(V \oplus V,\fb \oplus -\fb)$ is
  $$\begin{array}{l}
\tau(\grph(\alpha_1),\grph(\alpha_2),\grph(\alpha_3))\\[1ex]
 =~\sigma(V,-\fb(1-\alpha^{-1}_1\alpha_2))(1-\alpha_3^{-1}\alpha_2)^{-1}(1-\alpha_3^{-1}\alpha_1))\\[1ex]
 =~\sigma(V,-e^{-\pi i /2}((2 \sin \pi (a_2-a_1)) e^{\pi i(a_2-a_1 -1/2)}) 
 ((2 \sin \pi (a_2-a_3) ) e^{\pi i(a_2-a_3 -1/2)})^{-1}\\[1ex]
 \hskip150pt ((2 \sin \pi (a_1-a_3)) e^{\pi i(a_1-a_3 -1/2)}))\\[1ex]
 =~ \sigma(V,\dfrac{2\sin \pi (a_2-a_1) \sin \pi(a_1-a_3)}{\sin \pi (a_2-a_3)})\\[1ex]
 =~2 \sign( \sin \pi (a_1-a_2) \sin \pi (a_2-a_3) \sin \pi (a_3-a_1)) \in \{0,2,-2\}~.
 \end{array}$$ 
  (ii)  By (i) the evaluation of the Meyer cocycle on  $\alpha=e^{2\pi i a}$, $\beta=e^{2\pi i b} \in \Sp(2,\bR)$ is
 $$\begin{array}{ll}
\tau_1(\alpha,\beta)& =~\tau(\grph(1),\grph(\alpha),\grph(\alpha\beta))\\[1ex]
&=~2 \sign( \sin \pi a \sin \pi b \sin \pi (a+b)) \in \{0,2,-2\}~.
 \end{array}$$ 
\end{eg}

We now consider another cocycle $\tau'_g$ on $\Sp(2g,\bR)^\delta$, again defined 
in terms of the Maslov index. We shall see in Theorem \ref{th:tautau'} below that it is cohomologous to the Meyer cocycle $\tau_g$, but not equal to it. (For $g=1$ see Example \ref{eg:tauprime1}).

\begin{defn}\label{df:Maslovcocycle}
The \emph{Maslov cocycle}\index{Maslov cocycle}
$\tau'_g\colon \Sp(2g,\bR)^\delta \times \Sp(2g,\bR)^\delta \to \bZ$
is defined as follows. Choose a lagrangian subspace $L \subseteq
\bR^{2g}$, and set
$$ \tau'_g(\alpha,\beta) = \tau(L,\alpha(L),\alpha\beta(L)). $$
It follows from Lemma \ref{le:Maslov-invariant} that this is 
independent of the choice of $L$, and that
$$ \tau'_g(\alpha,\beta) = \tau(\alpha^{-1}(L),L,\beta(L)). $$
\end{defn}

\begin{eg}\label{eg:tauprime1}
Using the inclusion
$$[0,1) \xymatrix{\ar[r]^-{\cong}&} \SO(2)\subset\Sp(2,\bR)~~;~~a \longmapsto e^{2\pi i a}=
\begin{pmatrix} \cos 2\pi a & -\sin 2\pi a \\
\sin 2\pi a & \cos 2\pi a \end{pmatrix}$$
as an identification we have that for $g=1$ Example \ref{eg:tau1} gives the Meyer cocycle on $[0,1)$ to be
$$\tau_1\colon[0,1) \times [0,1) \longrightarrow \bZ~~;~~
(a,b) \longmapsto -2\sign(\sin \pi a \sin \pi b \sin \pi (a+b))~.$$
The Maslov cocycle is given by
$$\tau'_1\colon[0,1] \times [0,1] \longrightarrow \bZ~~;~~
(a,b) \longmapsto \tau(L_0,L_{2\pi a},L_{2\pi (a+b)})=-\sign(\sin 2\pi a \sin 2\pi b \sin (2\pi (a+b)))~.$$
The Dedekind $((~))$-function is defined by
$$((~))\colon\bR \longrightarrow (-1/2,1/2) ~~;~~ x \longmapsto ((x))~=~
\begin{cases} \{x\}-1/2&{\rm if}~x \in \bR\backslash \bZ,\\
0&{\rm if}~x \in \bZ, \end{cases}$$
with $\{x\} \in [0,1)$ the fractional part of $x \in \bR$, and is such that
$$\begin{array}{l}
2((2x))-4((x))~=~\sign(\sin 2\pi x)~,\\[1ex]
2(\,((x))+((y))-((x+y))\,)~=~-\sign(\sin \pi x \sin \pi y \sin \pi(x+y))~.
\end{array}$$
Thus
$$\begin{array}{l}
\tau_1(a,b)~=~4(\,((a))+((b))-((a+b))\,)~,\\[1ex]
\tau'_1(a,b)~=~2(\,((2a))+((2b))-((2a+2b))\,)~,\\[1ex]
\tau_1(a,b)-\tau'_1(a,b)~=~-\sign(\,\sin 2\pi a)-\sign(\sin 2\pi b)+\sign(\sin 2\pi(a+b)) \in \bZ
\end{array}
$$
and 
$$[\tau_1]~=~[\tau'_1] \in H^2(\Sp(2,\bR)^\delta;\bZ)~.$$
(See also Example \ref{eg:MeyerS1}.)
\end{eg}
\begin{rk}\label{gilmer-masbaum}
Gilmer and Masbaum recall a result of Walker in \cite[Theorem 8.10]{Gilmer/Masbaum:2013a} that states in their notations that $\left[\tau'_g\right]=-\left[\tau_g\right]$ for every $g\geq 1$. This is because they have the convention that $\tau_g(\alpha, \beta)$ is the signature of the surface bundle over a pair of pants defined by $\alpha, \beta$ \cite[p. 1087]{Gilmer/Masbaum:2013a}, while Meyer \cite[p. 243]{Meyer:1973a} and this paper (Definition \ref{df:Meyercocycle} and just above) take the opposite convention, that $\tau_g(\alpha, \beta)$ is minus the signature of the surface bundle over a pair of pants defined by $\alpha, \beta$.
\end{rk}

\begin{prop}\label{pr:cocycles-restrict}
Embed $\bR^{2g}$ in $\bR^{2g+2}$ by adjoining new basis elements
$v_{g+1}$ and $w_{g+1}$, and consider the corresponding embedding
$\Sp(2g,\bR)^\delta \to \Sp(2g+2,\bR)^\delta$. 
\begin{enumerate}
\item[\rm (i)] The Maslov cocycle $\tau'_{g+1}$ on $\Sp(2g+2,\bR)^\delta$ restricts to
the Maslov cocycle $\tau'_g$ on $\Sp(2g,\bR)^\delta$.
\item[\rm (ii)] The Meyer cocycle $\tau_{g+1}$ on $\Sp(2g+2,\bR)^\delta$ restricts to
the Meyer cocycle $\tau_g$ on $\Sp(2g,\bR)^\delta$.
\end{enumerate}
\end{prop}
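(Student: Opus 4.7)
The plan is to use Lemma \ref{le:Maslov-restricts} and its underlying mechanism, namely that adding a common lagrangian piece to each of three lagrangians does not alter the Wall–Maslov index because the extra contribution to the form $\fa$ is identically zero.

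For (i), I would pick any lagrangian $L \subset \bR^{2g}$ and take $\tilde L = L \oplus \bR v_{g+1} \subset \bR^{2g+2}$, which is lagrangian for $\fb'$. Under the embedding, any $\alpha \in \Sp(2g,\bR) \subset \Sp(2g+2,\bR)$ fixes both $v_{g+1}$ and $w_{g+1}$, so $\alpha(\tilde L) = \alpha(L) \oplus \bR v_{g+1} = \widetilde{\alpha(L)}$, and similarly for $\alpha\beta$. Lemma \ref{le:Maslov-restricts} applied to the three lagrangians $L, \alpha(L), \alpha\beta(L)$ then yields $\tau(\tilde L,\alpha(\tilde L),\alpha\beta(\tilde L)) = \tau(L,\alpha(L),\alpha\beta(L))$, which is exactly the restriction statement $\tau'_{g+1}(\alpha,\beta) = \tau'_g(\alpha,\beta)$.

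For (ii), the Meyer cocycle is computed in the doubled symplectic space, so I first rearrange the basis to identify
\[
(\bR^{2g+2}\oplus\bR^{2g+2},\,\fb'\oplus -\fb') \;\cong\; (\bR^{2g}\oplus\bR^{2g},\,\fb\oplus -\fb)\;\oplus\;(U\oplus U,\,\langle\cdot,\cdot\rangle\oplus -\langle\cdot,\cdot\rangle),
\]
where $U = \bR v_{g+1}\oplus\bR w_{g+1}$. Under this identification, for any $\alpha$ in the image of $\Sp(2g,\bR)\to\Sp(2g+2,\bR)$ the graph decomposes as $\grph(\alpha) = \grph(\alpha|_{\bR^{2g}}) \oplus D$, where $D = \{(u,u) : u\in U\}$ is the diagonal of $U\oplus U$. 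The key observation is that $D$ is a lagrangian of the second factor, and it is the same piece attached to each of $\grph(1), \grph(\alpha), \grph(\alpha\beta)$.

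The argument is then a direct generalisation of Lemma \ref{le:Maslov-restricts}: the subspace $\Delta$ of Proposition \ref{le:wall} splits as $\Delta(\grph(1),\grph(\alpha),\grph(\alpha\beta))\oplus \Delta(D,D,D)$, and the form $\fa$ restricted to $\Delta(D,D,D)$ is identically zero because it is computed by pairing elements of $D$ against one another via $\langle\cdot,\cdot\rangle\oplus -\langle\cdot,\cdot\rangle$, and $D$ is isotropic. Therefore the extra summand contributes $0$ to the signature, and $\tau_{g+1}(\alpha,\beta)=\tau_g(\alpha,\beta)$. I do not foresee a real obstacle: the only thing to get right is the bookkeeping of the basis permutation that identifies the doubled symplectic space as an orthogonal sum so that Lemma \ref{le:Maslov-restricts} (or its immediate generalisation to a common lagrangian summand) applies.
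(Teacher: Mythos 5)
Your argument is correct and follows the same route as the paper: part (i) is exactly the paper's invocation of Lemma~\ref{le:Maslov-restricts}, and for part (ii) the paper likewise observes that the graph lagrangian in the big space decomposes as the small graph plus the diagonal $\grph(1)\subset\bR^2\oplus\bR^2$ and then says ``apply Lemma~\ref{le:Maslov-restricts} twice.''  Your version for (ii) is marginally cleaner in that you prove the needed statement directly — you show $\Delta$ splits as $\Delta_{\text{small}}\oplus\Delta(D,D,D)$ with $\fa$ vanishing on the second summand because $D$ is isotropic — rather than first performing the (unstated in the paper) symplectic change of basis of $U\oplus U$ that writes $D=\bR e_1\oplus\bR e_2$ so that the one-dimensional Lemma~\ref{le:Maslov-restricts} can literally be applied twice; but the mechanism and conclusion are identical.
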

\begin{proof}
(i) This follows immediately from Lemma \ref{le:Maslov-restricts}.

(ii) For $\alpha\in\Sp(2g,\bR)^\delta$, the graph of $\alpha$ considered as 
an element of $\Sp(2g+2,\bR)^\delta$ is the direct sum of the graph of $\alpha$
considered as an element of $\Sp(2g,\bR)^\delta$ and $\grph(1)\subset
\bR^2\oplus \bR^2$. So apply Lemma \ref{le:Maslov-restricts} twice.
\end{proof}


\section{Surface bundles and the mapping torus constructions} \label{se:geometry}

In Section \ref{se:surfacebundles} we recall the classification of oriented
surface bundles $\Sigma_g \to E \to B$ with the fibre $\Sigma_g$ the standard closed surface of genus $g$ and the
base $B$ an oriented manifold with boundary (which may be empty).  
We shall be particularly concerned with the construction of surface bundles in four specific cases of the base $B$, in each of which $E$ is obtained from the monodromy morphism $\pi_1(B) \to \Gamma_g$ to the mapping class group of $\Sigma_g$ by a geometric mapping torus construction: 
\begin{itemize}
\item[(i)]  for a circle (\ref{se:mappingtorus}), 
\item[(ii)]  for a pair of pants (\ref{se:doublemappingtorus}), 
\item[(iii)]  for a torus with one boundary component (\ref{se:commutatormappingtorus}), 
\item[(iv)] for a surface  (\ref{se:multiplecommutatormappingtorus}). 
\end{itemize}

Let $\Omega_4$ denote the oriented cobordism group in dimension $4$. In Section \ref{se:geometriccocycle} we construct a geometric cocycle $\tau \in H^2(\Gamma_g;\Omega_4)$ for the oriented cobordism class (= signature) of the total space $E$ of a surface bundle
$\Sigma_g \to E\to B=\Sigma_h$ over a surface.

\subsection{Classification of surface bundles}\label{se:surfacebundles}

Let $\Homeo(\Sigma_g)$ be the topological group of self-homeomorphisms $\alpha:\Sigma_g \to \Sigma_g$,
and let $\Homeo^+(\Sigma_g) \subset \Homeo(\Sigma_g)$\index{Homeo@$\Homeo^+(\Sigma_g)$}
be the subgroup of the orientation-preserving self-homeomorphisms. A surface bundle $\Sigma_g \to E \to B$
is oriented if the manifolds $B,F$ are oriented and  the structure group of the bundle is 
$ \Homeo^+(\Sigma_g)$, so that $E$ is also an oriented manifold.
We shall only be considering oriented surface bundles.

Let $E\Homeo^+(\Sigma_g)$ be a contractible space with a free $\Homeo^+(\Sigma_g)$-action, so that the surface bundle over the classifying space $B\Homeo^+(\Sigma_g)=E\Homeo^+(\Sigma_g)/\Homeo^+(\Sigma_g)$ 
$$\Sigma_g \longrightarrow E\Homeo^+(\Sigma_g)\times_{\Homeo^+(\Sigma_g)}\Sigma_g \longrightarrow 
B\Homeo^+(\Sigma_g)$$
is universal. 
We shall only consider $\Homeo^+(\Sigma_g)$ for $g\geqslant 2$, when the connected components are contractible.
The set of connected components is the mapping class group $\Gamma_g=\pi_0\Homeo^+(\Sigma_g)$,
the discrete group of isotopy classes of orientation preserving homeomorphisms $\alpha\colon\Sigma_g \to \Sigma_g$, and
the forgetful map $B\Homeo^+(\Sigma_g)\to B\Gamma_g$ is a homotopy equivalence.

\begin{propqed}
\label{pr:surfacebundles}
$(${\rm Farb and Margalit \cite[pp. 154--155]{Farb/Margalit:2012a}}$)$ For $g \geq 2$, every surface bundle $\Sigma_g \to E \to B$ is isomorphic to the pullback of the universal surface  
bundle along a map $B \to B\Gamma_g$, with the monodromy defining a bijective correspondence
\[
\{\hbox{isomorphism classes of surface bundles $\Sigma_g \to E\to B$}\}/\approx\]
\[
 [B,B\Homeo^+(\Sigma_g)]~=~ [B,B\Gamma_g]= \{\hbox{homotopy classes of maps $\chi\colon B \to B\Gamma_g$}\}~.
\qedhere\]
\end{propqed}

We shall be mainly concerned with surface bundles $\Sigma_g \to E \to B$ when $B$ is a connected $n$-dimensional manifold with boundary, so that $E$ is an $(n+2)$-dimensional manifold with boundary. The forgetful map $$[B,B\Homeo^+(\Sigma_g)] \to \{\textnormal{\textit{conjugacy classes of }} {\chi \in \Hom}(\pi_1(B),\Gamma_g)\}$$
is a bijection, so a surface bundle $\Sigma_g \to E \to B$ is determined by
the monodromy group morphism $\chi\colon\pi_1(B) \to \Gamma_g$.

\begin{eg}
For the circle $B=S^1$ a surface bundle $\Sigma_g \to E \to S^1$ is classified by the monodromy map $\alpha \in [S^1,B\Gamma_g]$, with $E$ isomorphic to the mapping torus $T(\alpha)$ (Section \ref{se:mappingtorus} below).
\end{eg}

For $h,k \geqslant 0$ let $\Sigma_{h,k}$ be the connected surface with $k$ boundary components
$$(\Sigma_{h,k},\partial \Sigma_{h,k})~=~
\left({\rm cl.}(\Sigma_h\backslash \mathop{\sqcup}\limits_k D^2),
\mathop{\sqcup}\limits_kS^1\right)$$
with Euler characteristic $\chi(\Sigma_{h,k})=2-2h-k$. 
A surface bundle 
$$\Sigma_g \longrightarrow (E,\partial E) \longrightarrow (\Sigma_{h,k},\partial \Sigma_{h,k})$$
is classified by the monodromy group morphism
$$\begin{array}{l}
\chi\colon\pi_1(\Sigma_{h,k})~=~\left\langle x_1,y_1,\dots,x_h,y_h,z_1,\dots,z_k\,\vert\,
[x_1,y_1]\dots [x_h,y_h]=z_1\dots z_k\right\rangle \longrightarrow \Gamma_g~;\\[1ex]
\hskip150pt
x_i \longmapsto \alpha_i~,~y_i \longmapsto \beta_i~,~z_j \longmapsto \gamma_j~.
\end{array}$$

\begin{eg} For the pair of pants, $P=\Sigma_{0,3}$ a surface bundle 
$\Sigma_g \to (E,\partial E) \to (P,\partial P)$ 
is classified by the monodromy morphism 
$$\chi\colon\pi_1(P)~=~\left\langle x,y\right\rangle \longrightarrow \Gamma_g~;~x \longmapsto \alpha~,~y \longmapsto \beta$$
with $E$ isomorphic to the double mapping torus $T(\alpha,\beta)$ (Section \ref{se:doublemappingtorus} below) and $\partial E=T(\alpha) \sqcup T(\beta)\sqcup T(\alpha\beta)$.
\end{eg}

\begin{eg} For the torus with one boundary component, $Q=\Sigma_{1,1}$ a surface bundle 
$\Sigma_g \to (E,\partial E) \to (Q,\partial Q)$ is classified by the monodromy morphism
$$\chi\colon\pi_1(Q)~=~\left\langle x,y,z\,\vert\,[x,y]=z\right\rangle \longrightarrow \Gamma_g~;~x \longmapsto
\alpha~,~y \longmapsto \beta~,~z \longmapsto \gamma$$
with $\gamma=[\alpha,\beta]$, and $E$ isomorphic to the commutator mapping torus $S(\alpha,\beta)$ (Section \ref{se:commutatormappingtorus} below)
and $\partial Q=T(\gamma)$.
\end{eg}

\begin{eg} For the surface with one boundary component, $B=\Sigma_{h,1}$ a surface bundle
$\Sigma_g \to (E,\partial E) \to (\Sigma_{h,1},S^1)$
is classified by the monodromy morphism
$$\begin{array}{l}
\chi\colon\pi_1(\Sigma_{h,1})~=~\left\langle x_1,y_1,\dots,x_h,y_h,z\,\vert\,[x_1,y_1] \dots [x_h,y_h]=z\right\rangle \longrightarrow \Gamma_g~;\\[1ex]
\hskip150pt
x_i \longmapsto \alpha_i~,~y_i \longmapsto \beta_i~,~z \longmapsto \gamma
\end{array}$$
with $[\alpha_1,\beta_1]\dots [\alpha_h,\beta_h]=\gamma \in \Gamma_g$, $E$  isomorphic to the multiple commutator mapping torus 
$S(\alpha_1,\beta_1,\dots,\alpha_h,\beta_h)$  (Section \ref{se:multiplecommutatormappingtorus} below), and $\partial E=T(\gamma)$.
\end{eg}

\subsection{Surface bundles over a circle}\label{se:mappingtorus}

We will now explain in more detail the geometric construction of surface bundles over $S^1$.

\begin{defn}
The \textit{mapping torus} of $\alpha \in \Homeo^+(\Sigma_g)$ is the closed 
3-dimensional manifold 
$$T(\alpha)~=~ \Sigma_g \times [0,1]/\{(x,0) \sim (\alpha(x),1)\,\vert\,x \in \Sigma_g\}~,$$
a surface bundle over $S^1$
$$\Sigma_g \longrightarrow T(\alpha) \longrightarrow S^1~=~[0,1]/\{0 \sim 1\}$$
with monodromy $[\alpha] \in\Gamma_g$.
\end{defn}


\noindent By Proposition \ref{pr:surfacebundles} the function
$$\begin{array}{l}
[S^1,B\Gamma_g] \longrightarrow \{\hbox{isomorphism classes of surface bundles $\Sigma_g \to E \to S^1$}\}~;\\[1ex]
\hskip150pt \alpha \longmapsto (\Sigma_g \to T(\alpha) \to S^1)
\end{array}$$
is a bijection.  A surface bundle $\Sigma_g \to T(\alpha) \to S^1$ extends to
a surface bundle
$$\Sigma_g \longrightarrow (\delta T(\alpha),T(\alpha)) \longrightarrow (D^2,S^1)$$
if and only if $\alpha=1 \in \Gamma_g$, in which case
 $(\delta T(\alpha),T(\alpha))=\Sigma_g \times (D^2,S^1)$ up to isomorphism.

\begin{lemma} \label{le:homeomorphism} Given $\alpha,\beta,\gamma \in \Homeo^+(\Sigma_g)$, the following mapping tori are homeomorphic
\begin{itemize}
\item[(i)] $T(\alpha) ~\cong~ T(\alpha^{-1}),$
\item[(ii)]  $T(\alpha)~\cong~T(\gamma \alpha \gamma^{-1}),$
\item[(iii)]$T(\alpha\beta)~ \cong~ T(\beta\alpha).$
\end{itemize}
\end{lemma}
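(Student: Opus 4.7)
All three statements will be established by writing down an explicit homeomorphism of the quotient construction and checking that it descends through the identification $(x,0)\sim(\alpha(x),1)$.

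For (i), the plan is to use the involution of $\Sigma_g\times[0,1]$ given by $(x,t)\mapsto(x,1-t)$. Under the identification defining $T(\alpha)$, the pair $(x,0)\sim(\alpha(x),1)$ is sent to $(x,1)$ and $(\alpha(x),0)$. Setting $y=\alpha(x)$, this is $(\alpha^{-1}(y),1)\sim(y,0)$, which is exactly the gluing relation in $T(\alpha^{-1})$. So the map descends to a well-defined continuous bijection, and its inverse is constructed the same way.

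For (ii), the idea is to use $\gamma$ fiberwise: define $\Sigma_g\times[0,1]\to\Sigma_g\times[0,1]$ by $(x,t)\mapsto(\gamma(x),t)$. A point $(x,0)$ in $T(\alpha)$, glued to $(\alpha(x),1)$, is sent to $(\gamma(x),0)$ and $(\gamma\alpha(x),1)=(\gamma\alpha\gamma^{-1}(\gamma(x)),1)$, which matches the defining relation of $T(\gamma\alpha\gamma^{-1})$. The inverse is induced by $\gamma^{-1}$, so this gives a homeomorphism.

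For (iii), the cleanest approach is to deduce it from (ii): since $\beta\alpha = \beta(\alpha\beta)\beta^{-1}$, part (ii) applied with $\gamma=\beta$ gives $T(\alpha\beta)\cong T(\beta(\alpha\beta)\beta^{-1})=T(\beta\alpha)$.

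None of the three steps presents a real obstacle; the only bookkeeping to watch is that each candidate map respects the gluing relation on the nose, which is a direct computation as above. If one wishes to track orientations (so that the homeomorphisms are orientation-preserving and thus represent equalities in the relevant bordism group), part (i) requires a little more care since $t\mapsto 1-t$ reverses the circle direction; one can instead use $(x,t)\mapsto(\alpha^{-1}(x),1-t)$, which again descends correctly and reverses orientation on both factors, hence preserves the product orientation.
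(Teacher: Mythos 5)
Your three maps $(x,t)\mapsto(x,1-t)$, $(x,t)\mapsto(\gamma(x),t)$, and the deduction of (iii) from (ii) via $\beta\alpha=\beta(\alpha\beta)\beta^{-1}$ are exactly the paper's proof, so on the substance you are on target. One quibble with your orientation aside, which is not part of the lemma but worth getting right: you write that $(x,t)\mapsto(\alpha^{-1}(x),1-t)$ ``reverses orientation on both factors,'' but $\alpha^{-1}\in\Homeo^+(\Sigma_g)$ is orientation-\emph{preserving} on the surface factor, so this map, like $(x,t)\mapsto(x,1-t)$, reverses the product orientation; in fact the standard statement is $T(\alpha^{-1})\cong -T(\alpha)$ as oriented manifolds, and one should not expect an orientation-preserving homeomorphism in (i) in general. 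The lemma only asserts homeomorphism, so this does not affect its validity.
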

\begin{proof}
\begin{itemize}
\item[(i)] The map $T(\alpha) \to T(\alpha^{-1}) ; (x, t) \mapsto (x, 1- t)$ is a homeomorphism.
\item[(ii)] The map $T(\alpha) \to T(\gamma \alpha \gamma^{-1}) ; (x, t) \mapsto (\gamma(x), t)$ is a homeomorphism. 
\item[(iii)] The map $T(\alpha \beta) \to T(\beta \alpha) ; (x, t) \mapsto (\beta(x), t)$ is a homeomorphism. Note that (iii) is an immediate consequence of (ii). \qedhere\
\end{itemize}
\end{proof}

\subsection{Surface bundles over a pair of pants}\label{se:doublemappingtorus}

The pair of pants is the oriented surface defined by the 2-sphere with three boundary components
$$\begin{array}{ll}
(P,\partial P)&=~(\Sigma_{0,3},\partial \Sigma_{0,3})\\[1ex]
&=~({\rm cl.}(S^2 \backslash (D^2 \sqcup D^2 \sqcup D^2)),S^1 \sqcup S^1 \sqcup S^1)
\end{array}$$
with $\chi(P)=-1$. The pair of pants $P$ is homotopy equivalent to the figure $8$, $S^1 \vee S^1$, so that
the three inclusions $S^1 \subset \partial P \to P$ induce morphisms 
$$\pi_1(S^1)~=~\bZ \longrightarrow \pi_1(P)~=~\bZ * \bZ~=~\left\langle x,y \right\rangle ~;~ 1_1 \longmapsto x~,~1_2 \longmapsto y~,1_3 \longmapsto xy~.$$


\begin{defn} For any $\alpha,\beta \in \Homeo^+(\Sigma_g)$ let $N_1,N_2,N_3$ be the
three null-cobordisms of $\Sigma_g \times \{0,1\}$ defined by
$$\begin{array}{l}
i_1\colon\Sigma_g \times \{0,1\} \longrightarrow N_1~=~\Sigma_g \times I~;~(x,0) \longmapsto (x,0)~,~(x,1) \longmapsto (x,1)~,\\[1ex]
i_2\colon\Sigma_g \times \{0,1\} \longrightarrow N_2~=~\Sigma_g \times I~;~(x,0) \longmapsto (x,0)~,~(x,1) \longmapsto (\alpha(x),1)~,\\[1ex]
i_3\colon\Sigma_g \times \{0,1\} \longrightarrow N_3~=~\Sigma_g \times I~;~(x,0) \longmapsto (x,0)~,~(x,1) \longmapsto (\beta(x),1).
\end{array}$$ 
The {\it double mapping torus} of $\alpha,\beta\in\Homeo^+(\Sigma_g)$ is the thickening 
$$\begin{array}{ll}
\left(T(\alpha,\beta),\partial T(\alpha,\beta)\right)&=~
\left(W(N_1,N_2,N_3,\Sigma_g),\partial W(N_1,N_2,N_3,\Sigma_g)\right)\\[1ex]
&=~\left(T(\alpha) \times I \cup T(\beta) \times I\cup
T(\alpha\beta) \times I,T(\alpha) \sqcup T(\beta) \sqcup T(\alpha\beta)\right)
\end{array}
$$
which is a surface bundle over the pair of pants
$$\Sigma_g \longrightarrow \left(T(\alpha,\beta),\partial T(\alpha,\beta)\right) \longrightarrow (P,\partial P).$$

\begin{figure}
\labellist
\small\hair 2pt
\pinlabel \small {$T(\alpha)$} at 30 234
\pinlabel \small {$T(\beta)$} at 30 64
\pinlabel \small {$T(\alpha \beta)$} at 394 139
\pinlabel \small {$T(\alpha)\times I$} at 161 194
\pinlabel \small {$T(\beta) \times I$} at 161 88
\pinlabel \small {\rotatebox{45}{$T(\alpha \beta) \times I$}} at 251 139
\pinlabel \tiny {\rotatebox{70}{$\Sigma_g \times I$}} at 210 209
\pinlabel \tiny {\rotatebox{-70}{$\Sigma_g \times I$}} at 208 87
\pinlabel \tiny {$\Sigma_g \times I$} at 145 150
\pinlabel \tiny {$\alpha \cup I$} at 174 129
\pinlabel \tiny {\rotatebox{70}{$1 \cup1$}} at 192 161
\pinlabel \small {$\Sigma_g \cup \Sigma_g$} at 352 246
\endlabellist
\centering
\includegraphics[scale=0.70]{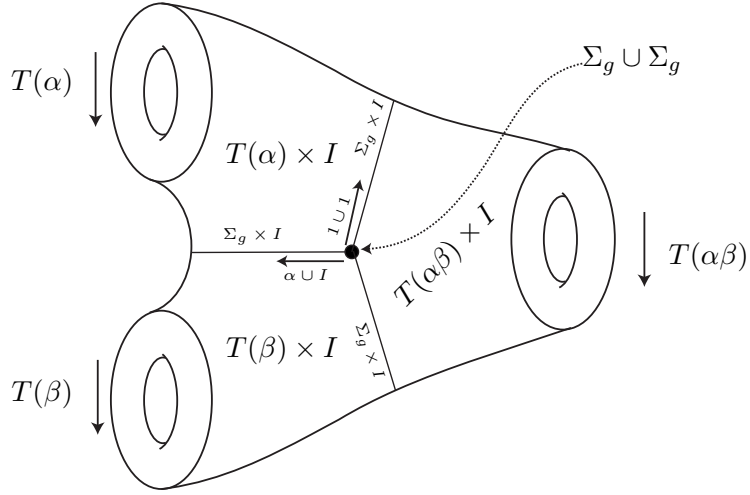}
\caption{The double mapping torus $T(\alpha,\beta)$ as a thickening}
\label{fig:double-mappingtorus-trisection}
\end{figure}

\raggedbottom
\end{defn}

\begin{rk} The double mapping torus can also be expressed as the union of three copies of
$\Sigma_g \times I \times I$, as follows:
$$\begin{array}{l}
T(\alpha, \beta)~=\\[1ex]
(\Sigma_g \times I) \times \{[0, 1/2],1 \}  \cup_{\beta \sqcup Id\colon (\Sigma_g \sqcup \Sigma_g) \times \{[0, 1/2], 1 \} \to(\Sigma_g \sqcup \Sigma_g) \times \{[0, 1/2], 0 \} } (\Sigma_g \times I) \times \{[0, 1/2],0 \} ~\cup \\[1ex]
 (\Sigma_g \times I) \times \{[1/2, 1],1 \}  \cup_{\alpha \beta \sqcup Id\colon (\Sigma_g \sqcup \Sigma_g) \times \{[1/2, 1], 1 \} \to(\Sigma_g \sqcup \Sigma_g) \times \{[1/2, 1], -1 \} } (\Sigma_g \times I) \times \{[1/2, 1],-1 \} ~ \cup \\[1ex]
 (\Sigma_g \times I) \times \{[1/2, 1],0 \}  \cup_{ \alpha \sqcup Id\colon (\Sigma_g \sqcup \Sigma_g) \times \{[1/2, 1], 0 \} \to(\Sigma_g \sqcup \Sigma_g) \times \{[0, 1/2], -1 \} } (\Sigma_g \times I) \times \{[0, 1/2],-1 \}.
\end{array}$$
See Figure \ref{fig:double-mappingtorus-construction} for the construction. The figure also shows how to glue the three pieces together to obtain the desired mapping tori for each of the boundaries.
\begin{figure}
\centering
\includegraphics[scale=0.8]{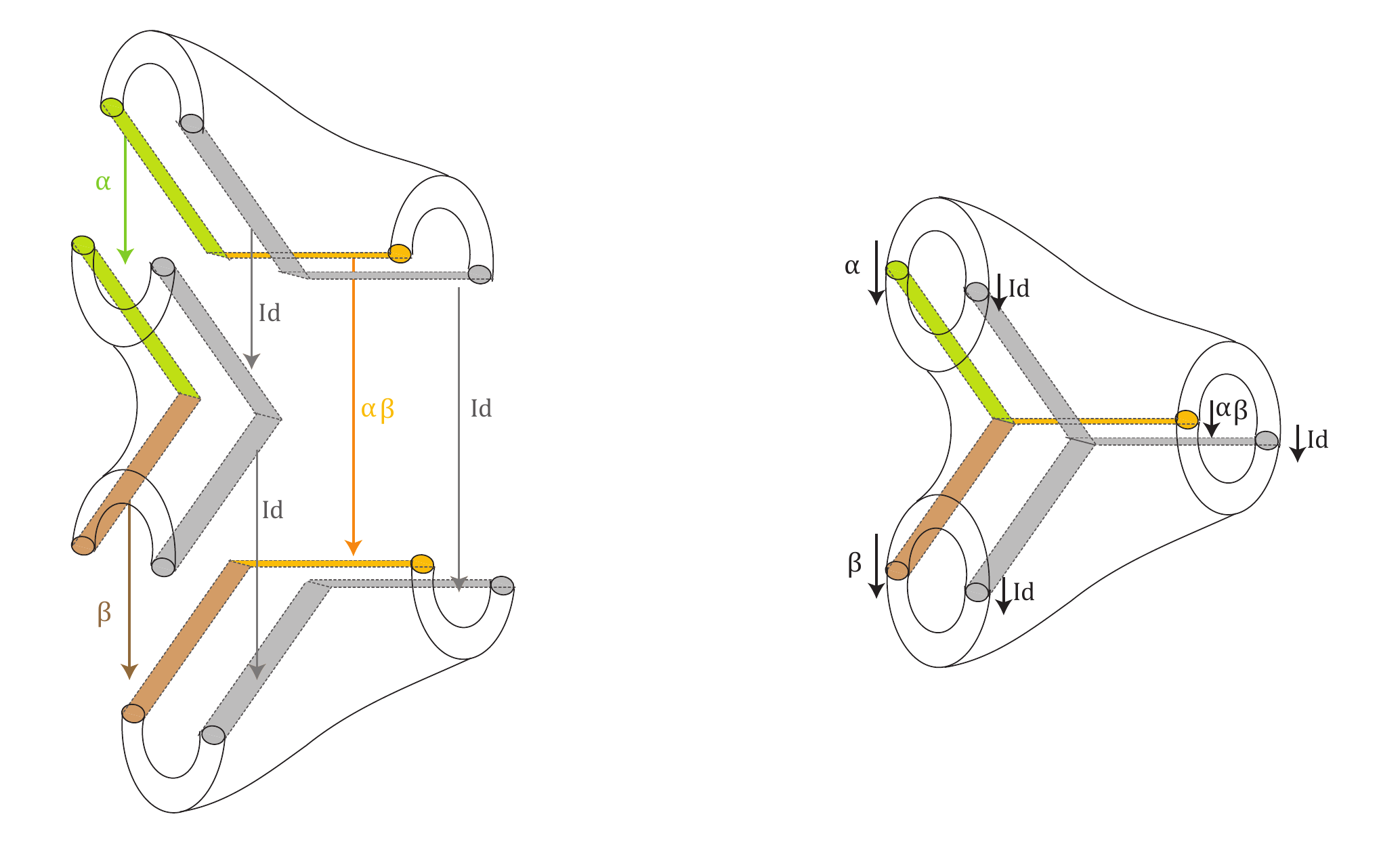}
\caption{The construction of $T(\alpha,\beta)$}
\label{fig:double-mappingtorus-construction}
\end{figure}
\raggedbottom
\end{rk}
\raggedbottom

\noindent By Proposition \ref{pr:surfacebundles} the function
$$\begin{array}{l}
[P,B\Gamma_g]~=~[S^1 \vee S^1,B\Gamma_g] \\[1ex]
 \longrightarrow \{\hbox{isomorphism classes of surface bundles $\Sigma_g \to E \to P$}\}~;\\[1ex]
\hskip150pt (\alpha,\beta) \longmapsto (\Sigma_g \to T(\alpha,\beta) \to P)
\end{array}$$
is a bijection.

\subsection{Surface bundles over a torus with one boundary component}\label{se:commutatormappingtorus}
The torus with one boundary component is the 2-dimensional manifold with boundary
$(Q,\partial Q)~=~(\Sigma_{1,1},S^1)$
%
%
%
with $\chi(Q)=-1$. This manifold $Q$ is homotopy equivalent to the figure eight $S^1 \vee S^1$,
with the inclusion $\partial Q \to Q$ inducing the morphism 
$$\pi_1(\partial Q)~=~\bZ \longrightarrow \pi_1(Q)~=~\bZ*\bZ~=~\left\langle x,y \right\rangle ~;~ 1 \longmapsto [x,y]~.$$

\begin{defn} The {\it commutator double torus} of $\alpha,\beta \in  \Homeo^+(\Sigma_g)$ is the 4-dimensional manifold with boundary  
$$(S(\alpha,\beta),\partial S(\alpha,\beta))~=~(T(\alpha) \times I\cup_{T(\alpha) \sqcup
T(\beta \alpha^{-1} \beta^{-1})}T(\alpha,\beta\alpha^{-1}\beta^{-1}),
T([\alpha,\beta]))~,$$
which is a surface bundle over the torus with one boundary component
$$\Sigma_g \longrightarrow(S(\alpha,\beta),\partial S(\alpha,\beta)) \longrightarrow (Q,\partial Q)~.$$
\end{defn}

\begin{figure}
\labellist
\small\hair 2pt
\pinlabel \tiny {$T(\alpha) \times I$} at 143 121
\pinlabel \tiny {$T(\alpha)$} at 276 189
\pinlabel \tiny {$T(\beta \alpha^{-1} \beta^{-1})$} at 245 56
\pinlabel \tiny {$T([\alpha, \beta])$} at 521 124
\pinlabel \tiny {$T(\alpha, \beta \alpha^{-1} \beta^{-1})$} at 531 221
\endlabellist
\centering
\includegraphics[scale=0.45]{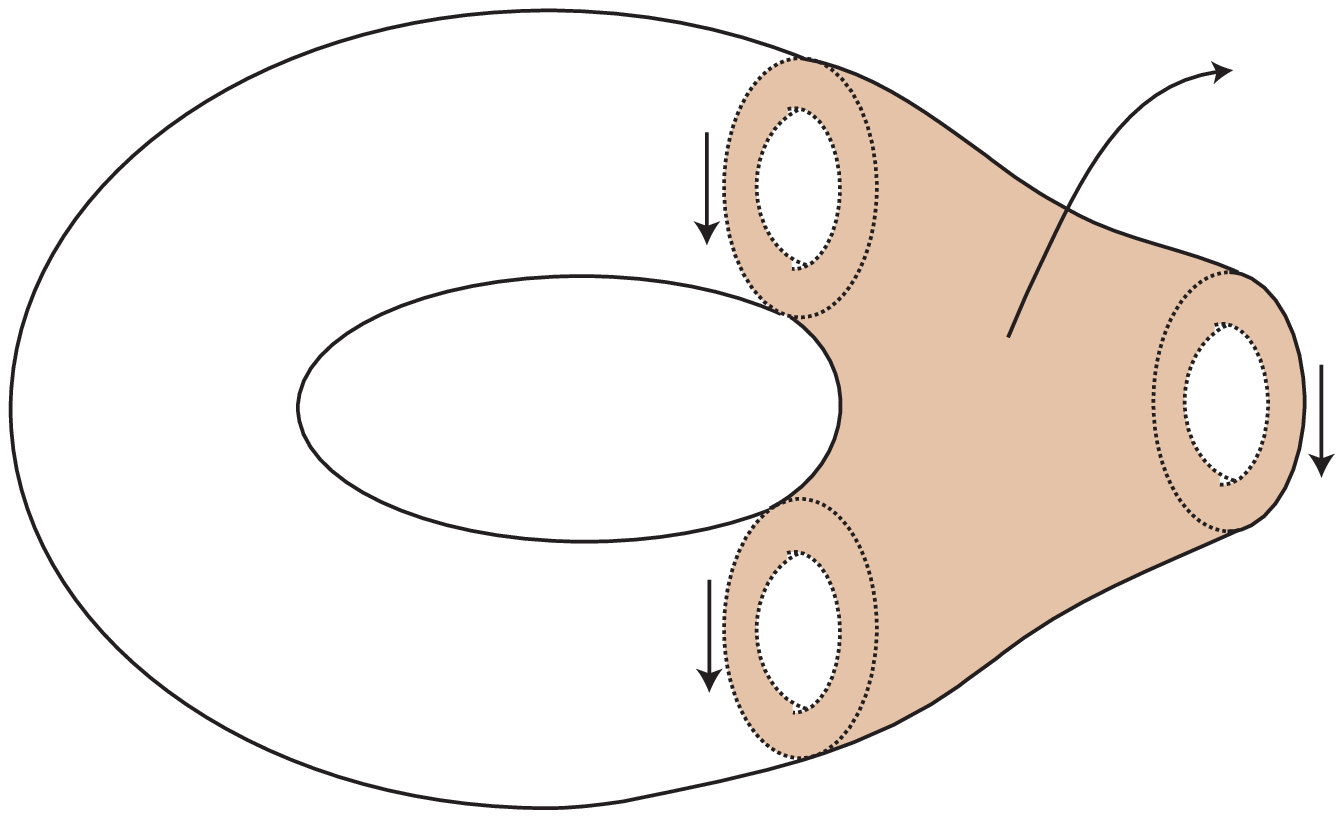}
\caption{The construction of $S(\alpha,\beta)$}
\label{fig:see-monster}
\end{figure}

\noindent By Proposition \ref{pr:surfacebundles} the function
$$\begin{array}{l}
[Q,B\Gamma_g]~=~[S^1 \vee S^1,B\Gamma_g] \\[1ex]
\longrightarrow \{\hbox{isomorphism classes of surface bundles $\Sigma_g \to E \to Q$}\}~;\\[1ex]
\hskip150pt (\alpha,\beta) \longmapsto \left(\Sigma_g \to S(\alpha,\beta) \to Q\right)
\end{array}$$
is a bijection.

\subsection{Surface bundles over a surface}\label{se:multiplecommutatormappingtorus}

Meyer \cite[Satz III. 8.1]{Meyer:1972a} used a decomposition of $\Sigma_{h,k}$ 
along $3h+k-1$ Jordan curves to express the signature $\sigma(E) \in \bZ$ of a surface bundle
$$\Sigma_g \longrightarrow E \longrightarrow \Sigma_{h,k}$$
in terms of the monodromy $\chi\colon\pi_1(\Sigma_{h,k}) \to \Gamma_g$.
In the special case $k=1$ we shall now obtain such an expression for $\sigma(E)$
using $2h-1$ Jordan curves, with $\Sigma_{h,1}$ a union of $h-1$ pairs of pants and $h$ tori with one boundary component, which gives a more direct relationship between the algebra and the topology.

\begin{defn} \label{tau-geo}
The {\it multiple commutator mapping torus} of $\alpha_1,\beta_1,\alpha_2,\beta_2,\dots,\alpha_h,\beta_h \in  \Homeo^+(\Sigma_g)$  with commutators
$\gamma_i=[\alpha_i,\beta_i]$ is the 4-manifold
$$\begin{array}{l}
S(\alpha_1,\beta_1,\alpha_2,\beta_2,\dots,\alpha_h,\beta_h) ~=\\[1ex]
S(\alpha_1,\beta_1) \cup S(\alpha_2,\beta_2) \cup \dots \cup S(\alpha_h,\beta_h)\cup T(\gamma_1,\gamma_2) \cup T(\gamma_1\gamma_2,\gamma_3) \cup \dots \cup T(\gamma_1\gamma_2 \dots \gamma_{h-1},\gamma_h)
 \end{array} $$
 with boundary
 $$\partial S(\alpha_1,\beta_1,\alpha_2,\beta_2,\dots,\alpha_h,\beta_h)~=~
 T(\gamma_1\gamma_2 \dots \gamma_h)$$
 which is a surface bundle
 $$\Sigma_g \longrightarrow \left(S(\alpha_1,\beta_1,\alpha_2,\beta_2,\dots,\alpha_h,\beta_h) ,T(\gamma_1\gamma_2 \dots \gamma_h)\right) \longrightarrow
 (\Sigma_{h,1},S^1)~.$$
See Figure \ref{fig:wee-monster}.

\begin{figure}
\labellist
\small\hair 2pt
\pinlabel \tiny{$S(\alpha_1, \beta_1)$} at 62 154
\pinlabel \tiny {$S(\alpha_2, \beta_2)$} at 145 9
\pinlabel \tiny {$S(\alpha_3, \beta_3)$} at 234 9
\pinlabel \tiny {$S(\alpha_h, \beta_h)$} at 411 9
\pinlabel \tiny {$T(\gamma_1, \gamma_2)$} at 146 170
\pinlabel \tiny {$T(\gamma_1 \gamma_2, \gamma_3)$} at 234 170
\pinlabel \tiny {$T(\gamma_1 \gamma_2 \dots \gamma_{h-1}, \gamma_h)$} at 490 151
\pinlabel \tiny {$T(\gamma_1)$} at 110 242
\pinlabel \tiny {$T(\gamma_1 \gamma_2)$} at 192 242
\pinlabel \tiny {$T(\gamma_1 \gamma_2 \gamma_3)$} at 278 242
\pinlabel \tiny {$T(\gamma_1 \gamma_2 \dots \gamma_h)$} at 455 242
\pinlabel \tiny {$T(\gamma_2)$} at 146 109
\pinlabel \tiny {$T(\gamma_3)$} at 235 112
\pinlabel \tiny {$T(\gamma_h)$} at 407 118
\endlabellist
\centering
\includegraphics[scale=0.67]{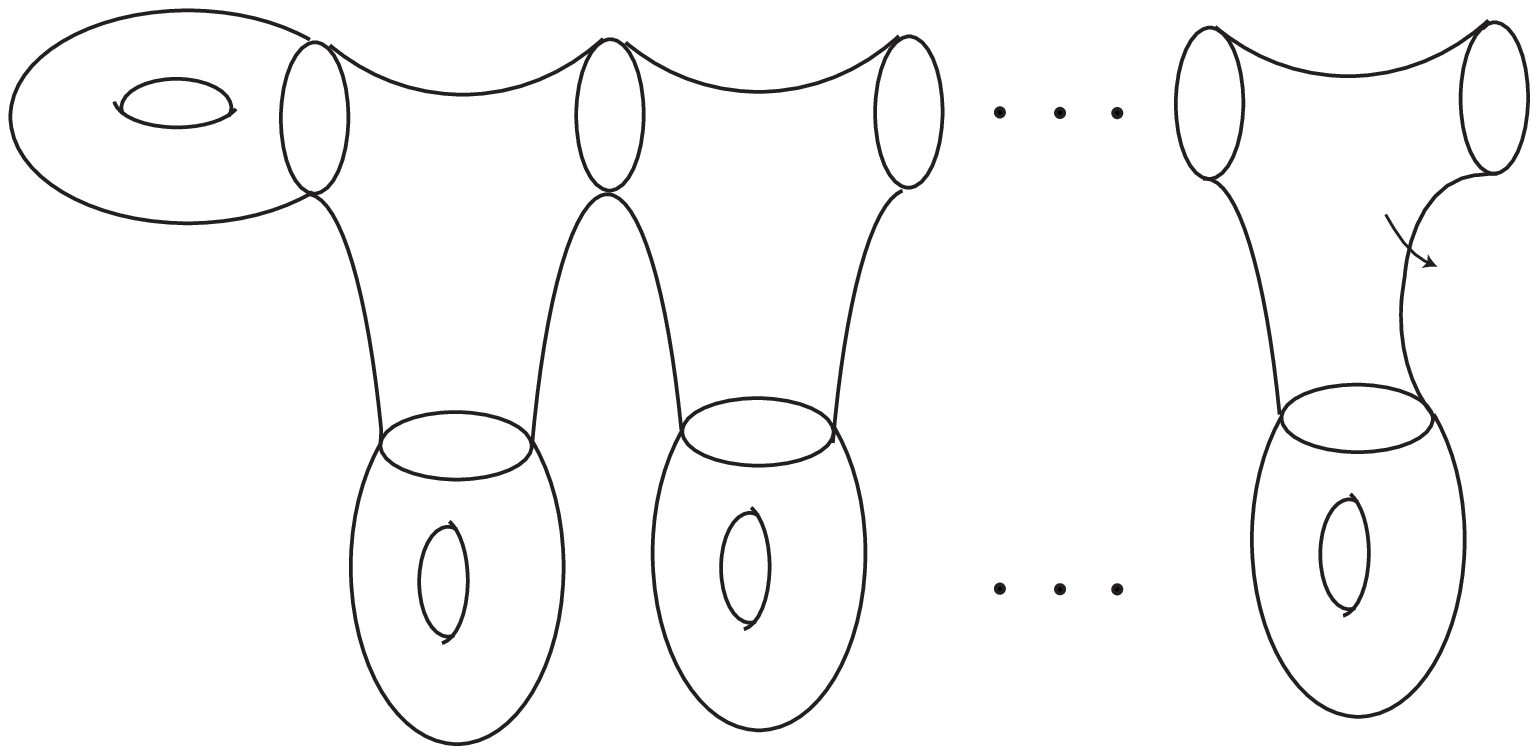}
\caption{The construction of $S(\alpha_1,\beta_1,\alpha_2,\beta_2,\dots,\alpha_h,\beta_h)$}
\label{fig:wee-monster}
\end{figure}

\end{defn}

\begin{prop}
For any expression of $\gamma \in  \Homeo^+(\Sigma_g)$ as a product of commutators 
$$\gamma~=~[\alpha_1,\beta_1][\alpha_2,\beta_2] \dots [\alpha_h,\beta_h] \in  \Homeo^+(\Sigma_g)$$
there is defined a surface bundle over the surface $\Sigma_{h,1}$
$$\Sigma_g \longrightarrow \left(\delta T(\gamma),T(\gamma)\right) \longrightarrow (\Sigma_{h,1},S^1)$$
with
$$\delta T(\gamma)~=~S(\alpha_1,\beta_1,\alpha_2,\beta_2,\dots,\alpha_h,\beta_h)~.$$
The surface bundle over $\Sigma_{h,1}$ extends to a surface bundle over the closed surface $\Sigma_h$
$$\Sigma_g \longrightarrow \delta T(\gamma) \cup_{T(\gamma)} \Sigma_g \times D^2 \longrightarrow \Sigma_{h,1} \cup_{S^1} D^2~=~\Sigma_h$$
if and only if $[\gamma]= 1 \in \Gamma_g$. See Figure \ref{fig:wee-monster2}.
\hfill\qed
\end{prop}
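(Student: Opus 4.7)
The first assertion is a matter of assembling the geometric construction from Definition \ref{tau-geo} and applying Proposition \ref{pr:surfacebundles}. Concretely, the plan is to observe that the presentation
$$\pi_1(\Sigma_{h,1})~=~\left\langle x_1,y_1,\dots,x_h,y_h,z\,\vert\,[x_1,y_1]\dots[x_h,y_h]=z\right\rangle$$
makes the assignment $x_i\mapsto\alpha_i,\ y_i\mapsto\beta_i,\ z\mapsto\gamma$ into a well-defined monodromy morphism $\chi\colon\pi_1(\Sigma_{h,1})\to\Gamma_g$, precisely because the hypothesis $\gamma=[\alpha_1,\beta_1]\dots[\alpha_h,\beta_h]$ respects the one defining relation. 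By Proposition \ref{pr:surfacebundles}, $\chi$ classifies a unique isomorphism class of surface bundle $\Sigma_g\to E\to\Sigma_{h,1}$.

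To identify $E$ geometrically with $\delta T(\gamma)=S(\alpha_1,\beta_1,\dots,\alpha_h,\beta_h)$, the plan is to decompose $\Sigma_{h,1}$ along $2h-1$ Jordan curves as in the discussion opening Section \ref{se:multiplecommutatormappingtorus}: namely, $h$ one-holed tori $Q_i$ together with $h-1$ pairs of pants joining their boundaries to a single outer boundary component. The bundles over these pieces are constructed explicitly in Sections \ref{se:doublemappingtorus} and \ref{se:commutatormappingtorus}: over $Q_i$ one puts the commutator double torus $S(\alpha_i,\beta_i)$, with boundary mapping torus $T(\gamma_i)$; over the $i$th pair of pants one puts the double mapping torus $T(\gamma_1\cdots\gamma_{i-1},\gamma_i)$, whose three boundary mapping tori are $T(\gamma_1\cdots\gamma_{i-1})$, $T(\gamma_i)$ and $T(\gamma_1\cdots\gamma_i)$. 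Gluing these pieces along the matching mapping tori exactly as in Figure \ref{fig:wee-monster} yields a bundle with boundary $T(\gamma_1\cdots\gamma_h)=T(\gamma)$, realising $\chi$.

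For the second assertion, write $\Sigma_h=\Sigma_{h,1}\cup_{S^1}D^2$. Extending the bundle over the added disk is equivalent, via Proposition \ref{pr:surfacebundles}, to extending the classifying map $\Sigma_{h,1}\to B\Gamma_g$ over $D^2$. The restriction to $\partial\Sigma_{h,1}=S^1$ is classified by the monodromy $[\gamma]\in\Gamma_g$ and has total space $T(\gamma)$, so an extension across $D^2$ exists if and only if $[\gamma]=1\in\Gamma_g$. When this holds, the observation recorded in Section \ref{se:mappingtorus} shows that $(\delta T(\gamma),T(\gamma))$ extends, up to isomorphism, by the trivial piece $\Sigma_g\times(D^2,S^1)$, giving the total space $\delta T(\gamma)\cup_{T(\gamma)}\Sigma_g\times D^2$ as stated. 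The main obstacle, which is essentially bookkeeping rather than substance, is checking that the boundary identifications between the $S(\alpha_i,\beta_i)$ and the double mapping tori in the first step are compatible with the orientations and monodromies so that the glued object really is a bundle over the desired decomposition of $\Sigma_{h,1}$; this is transparent from Figure \ref{fig:wee-monster} once one keeps track of the three boundary components of each pair of pants.
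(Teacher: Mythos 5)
Your proposal is correct and takes essentially the same approach as the paper, which marks the proposition with \qed and gives no explicit argument, treating it as immediate from the monodromy classification of surface bundles (Proposition \ref{pr:surfacebundles}), the mapping torus constructions of Sections \ref{se:mappingtorus}--\ref{se:commutatormappingtorus}, the decomposition of $\Sigma_{h,1}$ into $h$ one-holed tori and $h-1$ pairs of pants announced at the start of Section \ref{se:multiplecommutatormappingtorus}, and the extension criterion over $D^2$ recorded in Section \ref{se:mappingtorus}. Your write-up simply makes explicit the bookkeeping the paper leaves to the reader.
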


\begin{figure}
\labellist
\small\hair 2pt
\pinlabel \tiny{$S(\alpha_1, \beta_1)$} at 62 154
\pinlabel \tiny {$S(\alpha_2, \beta_2)$} at 145 9
\pinlabel \tiny {$S(\alpha_3, \beta_3)$} at 234 9
\pinlabel \tiny {$S(\alpha_h, \beta_h)$} at 411 9
\pinlabel \tiny {$T(\gamma_1, \gamma_2)$} at 146 170
\pinlabel \tiny {$T(\gamma_1 \gamma_2, \gamma_3)$} at 234 170
\pinlabel \tiny {$T(\gamma_1 \gamma_2 \dots \gamma_{h-1}, \gamma_h)$} at 490 151
\pinlabel \tiny {$\Sigma_g \times D^2$} at 485 213
\pinlabel \tiny {$T(\gamma_1)$} at 110 242
\pinlabel \tiny {$T(\gamma_1 \gamma_2)$} at 192 242
\pinlabel \tiny {$T(\gamma_1 \gamma_2 \gamma_3)$} at 278 242
\pinlabel \tiny {$T(\gamma_1 \gamma_2 \dots \gamma_h = 1)$} at 420 252
\pinlabel \tiny {$T(\gamma_2)$} at 146 109
\pinlabel \tiny {$T(\gamma_3)$} at 235 112
\pinlabel \tiny {$T(\gamma_h)$} at 407 118
\endlabellist
\centering
\includegraphics[scale=0.67]{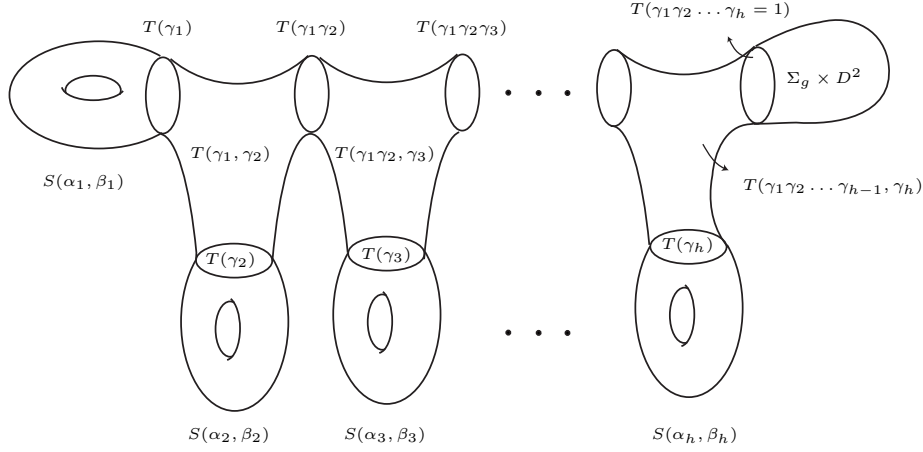}
\caption{The construction of $\delta T(\gamma) \cup_{T(\gamma)} \Sigma_g \times D^2$}
\label{fig:wee-monster2}
\end{figure}

Recall that from Figure \ref{fig:wee-monster} onwards $\gamma_i = [\alpha_i, \beta_i]$. By abuse of notation, in the computation below we will also write $\alpha_i, \gamma_i$ for their image in $\Gamma_g$ under the canonical morphism $\Homeo^+(\Sigma_g)\rightarrow \Gamma_g$.

To compute the signature of the surface bundle $E = \delta T(\gamma) \cup_{T(\gamma)} \Sigma_g \times D^2$ we use the canonical projection $\rho\colon \Gamma_g \rightarrow \Sp(2g, \bZ)$ and Novikov additivity of the signature, together with the fact that $\sigma(\Sigma_g \times D^2) =0$  and the decomposition of  each $S(\alpha_i, \beta_i)$ as the union of $T(\alpha_i) \times I$  and $T(\alpha, \beta \alpha^{-1}\beta^{-1})$.
\begin{align*} 
\sigma(E) & = \sigma \left(\delta T(\gamma) \cup_{T(\gamma)} \Sigma_g \times D^2\right) \\
            & = \sigma\left(\delta T(\gamma)\right) + \sigma\left(\Sigma_g \times D^2\right) \\
           & = \sigma\left(S(\alpha_1,\beta_1) \cup S(\alpha_2,\beta_2) \cup \dots \cup S(\alpha_h,\beta_h)\cup T(\gamma_1,\gamma_2)  \cup \dots \cup T(\gamma_1\gamma_2 \dots \gamma_{h-1},\gamma_h)\right) +0 \\      
           & = \sigma\left(\cup_{i=1}^h S(\alpha_i, \beta_i)\right) +   \sigma\left(T(\gamma_1,\gamma_2) \right)  + \dots + \sigma \left(T(\gamma_1\gamma_2 \dots \gamma_{h-1},\gamma_h)\right) \\
           & = \sigma\left(\cup_{i=1}^h (T(\alpha_i) \times I \cup T(\alpha_i, \beta_i \alpha^{-1}_i \beta^{-1}_i)\right) +   \sigma\left(T(\gamma_1,\gamma_2)\right)  + \dots +\sigma\left(T(\gamma_1\gamma_2 \dots \gamma_{h-1},\gamma_h)\right) \\
           & = -\sum_{i=1}^h \tau_g\left(\rho(\gamma_i), \rho(\alpha_i)\right) -  \sum_{i=1}^{h-1} \tau_g\left(\rho(\widetilde{\gamma}_{i}), \rho(\gamma_{i+1})\right), ~~ \textnormal{ where }  \widetilde{\gamma}_{i} = \gamma_1 \gamma_2 \dots \gamma_i.          
\end{align*}
Note that this expression coincides, up to differences in notation, with the expression in \cite[Satz 1, (14*)]{Meyer:1973a}.

\subsection{Geometric cocycle}\label{se:geometriccocycle}

Let $\Omega_n$ be the $n$-dimensional oriented cobordism group. It is well-known that 
$$\Omega_0~=~\bZ~,~\Omega_1~=~\Omega_2~=~\Omega_3~=~0$$
and that the signature defines an isomorphism
$$\sigma\colon\Omega_4 \longrightarrow \bZ~;~ M \longmapsto \sigma(M).$$

\begin{prop}\label{cobordism cocycle}
{\rm (i)} The mapping class group $\Gamma_g$ fits into a 
geometric central extension
$$1\longrightarrow\Omega_{4} \longrightarrow \widetilde{\Gamma}_g \longrightarrow \Gamma_g\longrightarrow 1$$
with
%
$$\begin{array}{l}
\widetilde{\Gamma}_g~=~\{(M=\textnormal{ $4$-dimensional manifold},\alpha \in \Gamma_g)\,\vert\,\partial M=T(\alpha)\}/\sim~,\\[1ex]
(M,\alpha) \sim (M',\alpha')~\textnormal{if $\alpha=\alpha'\in \Gamma_g$ and
$M\cup_{T(\alpha)}-M'=0 \in\Omega_{4}$}~,\\[1ex]
1~=~(\Sigma_g \times D^2,1:\Sigma_g \to \Sigma_g)~,\\[1ex]
-(M,\alpha)~=~(-M,\alpha^{-1})~{\rm via}~-T(\alpha^{-1}) \cong T(\alpha)~;~[x,t] \mapsto [\alpha(x),1-t]~,\\[1ex] 
(M_1,\alpha)(M_2,\beta)=((M_1 \sqcup M_2)\cup_{T(\alpha) \sqcup T(\beta)}-T(\alpha,\beta),\alpha\beta)~,\\[1ex]
\widetilde{\Gamma}_g \longrightarrow \Gamma_g~;~(M,\alpha) \longmapsto \alpha~,\\[1ex]
\Omega_{4} \longrightarrow\widetilde{\Gamma}_g~;~N \longmapsto
(N \sqcup \Sigma_g \times D^2,1:\Sigma_g \to \Sigma_g)~.
\end{array}$$

{\rm (ii)} A section $\Gamma_g \to \widetilde{\Gamma}_g; \alpha \mapsto \delta T(\alpha)$ determines a cocycle
$$\tau^{geo}\colon\Gamma_g \times \Gamma_g \longrightarrow \Omega_{4}~;~
(\alpha,\beta) \longmapsto T(\alpha,\beta)\cup \delta T(\alpha) \cup \delta T(\beta) \cup \overline{\delta T(\alpha\beta)}$$
for the class $[\tau^{geo}] \in H^2(\Gamma_g;\Omega_{4})$.
\end{prop}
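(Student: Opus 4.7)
The plan is to verify that $\widetilde\Gamma_g$ with the given operation is a group, that the projection to $\Gamma_g$ is a surjective homomorphism with central kernel $\Omega_4$, and then derive the cocycle in (ii) by the standard construction associated to a set-theoretic section of a central extension.

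For part (i), surjectivity of $\widetilde\Gamma_g\to\Gamma_g$ is immediate from $\Omega_3=0$: given $\alpha\in\Gamma_g$, the closed oriented $3$-manifold $T(\alpha)$ bounds a compact oriented $4$-manifold $M$, yielding a lift $(M,\alpha)$. The equivalence relation is reflexive (since $M\cup_{T(\alpha)}(-M)=\partial(M\times I)$ vanishes in $\Omega_4$), symmetric by orientation reversal, and transitive by Novikov additivity. The same reasoning shows that the product
\[ (M_1,\alpha)(M_2,\beta)=((M_1\sqcup M_2)\cup_{T(\alpha)\sqcup T(\beta)}(-T(\alpha,\beta)),\alpha\beta) \]
descends to equivalence classes, with correct boundary $T(\alpha\beta)$ because $\partial T(\alpha,\beta)=T(\alpha)\sqcup T(\beta)\sqcup T(\alpha\beta)$.

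I would then verify the group axioms. The identity: $(\Sigma_g\times D^2,1)\cdot(M,\alpha)$ simplifies, using the homeomorphism $T(1,\alpha)\cong T(\alpha)\times I$ of Lemma \ref{le:homeomorphism}, to a $4$-manifold cobordant to $M$ rel boundary. Inverses: $(M,\alpha)(-M,\alpha^{-1})$ is cobordant to $(\Sigma_g\times D^2,1)$ via the double of $M\times I$ combined with the identification $-T(\alpha^{-1})\cong T(\alpha)$ provided in the statement. Associativity: both iterated products are surface bundles over the $4$-holed sphere decomposed into two pairs of pants in two different ways, and the two resulting $4$-manifolds have the same boundary $T(\alpha\beta\gamma)$ and are cobordant, as they come from isomorphic surface bundles over diffeomorphic surfaces. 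Finally, $\Omega_4$ is identified with the kernel of $\widetilde\Gamma_g\to\Gamma_g$: the map $N\mapsto(N\sqcup\Sigma_g\times D^2,1)$ lands in the kernel, and any two representatives $(M,1),(M',1)$ over $1\in\Gamma_g$ differ exactly by the cobordism class of $M\cup_{T(1)}(-M')\in\Omega_4$. Centrality is automatic since $\Omega_4$ is abelian and lies in the kernel of the projection.

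Part (ii) is then an instance of the standard fact that for a central extension with a set-theoretic section $s\colon\Gamma_g\to\widetilde\Gamma_g$, the map $(\alpha,\beta)\mapsto s(\alpha)s(\beta)s(\alpha\beta)^{-1}\in\Omega_4$ is a $2$-cocycle representing the extension class in $H^2(\Gamma_g;\Omega_4)$. Unwinding the multiplication and inversion in $\widetilde\Gamma_g$ for $s(\alpha)=\delta T(\alpha)$, one computes that $s(\alpha)s(\beta)s(\alpha\beta)^{-1}$ is represented by the closed oriented $4$-manifold
\[ T(\alpha,\beta)\cup\delta T(\alpha)\cup\delta T(\beta)\cup\overline{\delta T(\alpha\beta)}, \]
matching the expression in the statement. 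The main obstacle I expect is the careful orientation bookkeeping in verifying associativity and the identity axiom, both of which reduce to constructing explicit cobordisms rel boundary between $4$-manifolds built from pair-of-pants bundles, while tracking the identification $-T(\alpha^{-1})\cong T(\alpha)$ through the multiple gluings.
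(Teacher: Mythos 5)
Your proposal is correct but takes a genuinely different route from the paper. You prove (i) carefully by verifying the group axioms (identity, inverses, associativity, centrality of the kernel) and then deduce (ii) from the standard fact that a set-theoretic section of a central extension yields a $2$-cocycle representing its class. The paper does almost the opposite: for (i) it only records that the nullbordisms $\delta T(\alpha)$ exist because $\Omega_3=0$ (treating the rest of the group-law verification as routine), and for (ii) it verifies the cocycle identity
\[
\tau^{geo}(\alpha,\beta)+\tau^{geo}(\alpha\beta,\gamma)=\tau^{geo}(\beta,\gamma)+\tau^{geo}(\alpha,\beta\gamma)\in\Omega_4
\]
\emph{directly}, by observing (their Figure \ref{fig:cocycle-condition}) that $T(\alpha,\beta)\cup T(\alpha\beta,\gamma)$ and $T(\beta,\gamma)\cup T(\alpha,\beta\gamma)$ are the same surface bundle over the $4$-holed sphere, and then using Novikov additivity together with the isomorphism $\sigma\colon\Omega_4\to\bZ$ to conclude the two sides have the same signature, hence are equal in $\Omega_4$. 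It is worth noticing that the geometric input is the same in both approaches: your associativity verification rests on exactly the same decomposition of the $4$-holed sphere that the paper uses to check the cocycle identity. What the paper's route buys is that it never needs to unwind the inverse or the identity axioms, and the signature shortcut lets one avoid constructing explicit cobordisms rel boundary; what your route buys is a cleaner conceptual picture (the cocycle is literally the obstruction cocycle of the extension) and it makes the claim in (i) that the sequence is a central extension an actual proved statement rather than an assertion. Your acknowledged ``main obstacle'' --- the orientation bookkeeping showing $s(\alpha)s(\beta)s(\alpha\beta)^{-1}$ is represented by the closed manifold $T(\alpha,\beta)\cup\delta T(\alpha)\cup\delta T(\beta)\cup\overline{\delta T(\alpha\beta)}$ --- is indeed the place where care is needed, but the boundary count checks out and nothing there would fail.
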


\begin{proof}
{\rm (i)} Note that since since  $\Omega_{3}=0$, and the mapping tori $T(\alpha)$, $T(\beta)$ and $T(\alpha \beta)$ are $3$-dimensional closed manifolds, then the nullbordisms $\delta T(\alpha)$, $\delta T(\beta)$ and $\delta T(\alpha \beta)$ always exist.

{\rm (ii)} By definition, for $G$ a group and $A$ an abelian group, a \emph{cocycle}\index{cocycle} is a function $\tau:G \times G \to A$ such that
$$\tau(x,y)+\tau(xy,z)=\tau(y,z)+\tau(x,yz) \in A~(x,y,z \in G)~.$$

In order to see that $\tau^{geo}$ described above is indeed a cocycle, we start by noting that there is a $4$-dimensional manifold with boundary the disjoint union of mapping tori $T(\alpha) \sqcup T(\beta) \sqcup T(\alpha \beta \gamma) \sqcup T(\gamma)$.
This manifold can be described either as the union of the double mapping tori $T(\alpha,\beta) \cup T(\alpha\beta,\gamma)$ or as $T(\beta,\gamma) \cup T(\alpha,\beta\gamma)$, as depicted geometrically in Figure \ref{fig:cocycle-condition}.

\begin{figure}
\labellist
\small\hair 2pt
\pinlabel \tiny {$\alpha$} at 94 133
\pinlabel \tiny {$\beta$} at 94 33
\pinlabel \tiny {$\alpha \beta \gamma$} at 244 133
\pinlabel \tiny {$\gamma$} at 250 33
\pinlabel \tiny {$\alpha$} at 365 133
\pinlabel \tiny {$\beta$} at 365 33
\pinlabel \tiny {$\alpha \beta \gamma$} at 516 133
\pinlabel \tiny {$\gamma$} at 521 33
\pinlabel \tiny {$\alpha \beta$} at 168 83
\pinlabel \tiny {$\beta \gamma$} at 441 83
\pinlabel \tiny {$T(\alpha \beta)$} at 132 99
\pinlabel \tiny {$T(\alpha \beta, \gamma)$} at 208 99
\pinlabel \tiny {$T(\alpha, \beta \gamma)$} at 445 127
\pinlabel \tiny {$T(\beta, \gamma)$} at 445 46
\endlabellist
\centering
\includegraphics[scale=0.60]{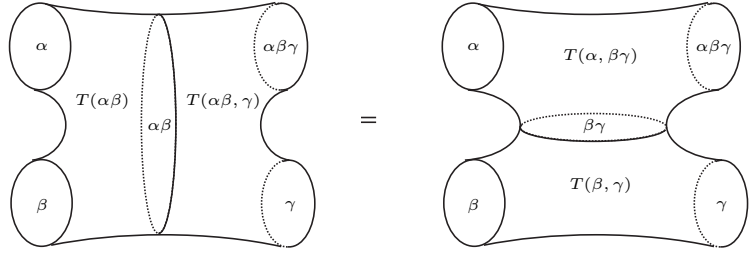}
\caption{$T(\alpha,\beta) \cup T(\alpha\beta,\gamma) = T(\beta,\gamma) \cup T(\alpha,\beta\gamma)$}
\label{fig:cocycle-condition}
\end{figure}

Note also that 
\begin{equation}\label{sigma-geococycle}
\sigma\left(T(\alpha,\beta)\right) + \sigma\left(T(\alpha\beta,\gamma)\right) = \sigma\left(T(\beta,\gamma) \right) + \sigma\left(T(\alpha,\beta\gamma)\right).
\end{equation}

Now in order for $\tau^{geo}$ to be a cocycle, the following identity has to be satisfied,
\begin{equation}\label{taugeo} \tau^{geo}(\alpha, \beta)+\tau^{geo}(\alpha \beta,\gamma)=\tau^{geo}(\beta,\gamma)+\tau^{geo}(\alpha,\beta \gamma).
\end{equation}
In the context of Equation \eqref{taugeo}, addition should be interpreted as disjoint union of manifolds and the equal sign means that the two sides belong to the same bordism class in $\Omega_4$. Since there is an isomorphism $\sigma \colon \Omega_4 \to \bZ$, to check that these manifolds belong to the same bordism class we will only need to check that they have the same signature. That is,
\begin{align*}
\sigma\left(T(\alpha,\beta)\cup \delta T(\alpha) \cup \delta T(\beta) \cup \overline{\delta T(\alpha \beta)}\right)  + \sigma\left(T(\alpha \beta,\gamma)\cup \delta T(\alpha \beta) \cup \delta T(\gamma) \cup \overline{\delta T(\alpha \beta \gamma)}\right) \\
= \sigma\left(T(\beta,\gamma)\cup \delta T(\beta) \cup \delta T(\gamma) \cup \overline{\delta T(\beta \gamma)}\right)  + \sigma\left(T(\alpha, \beta\gamma)\cup \delta T(\alpha ) \cup \delta T(\beta \gamma) \cup \overline{\delta T(\alpha \beta \gamma)}\right).
\end{align*}

Using Novikov additivity of the signature and Equation \eqref{sigma-geococycle}, we see that this identity holds, and hence $\tau^{geo}$ is a cocycle.
\end{proof}

\section{Cohomology of Lie groups made discrete}\label{se:lie}

For any discrete group $G$ the group cohomology $H^2(G;\bZ)$ is isomorphic 
to the degree two cohomology $H^2(BG;\bZ)$ of the classifying space $BG$.
It classifies central group extensions\index{central extension}
$$ 1 \longrightarrow \bZ \longrightarrow \tilde G \longrightarrow G \longrightarrow 1. $$
A cocycle $\tau:G \times G \to \bZ$ determines a central group extension
$$1 \longrightarrow \bZ \longrightarrow \bZ\times_{\tau} G \longrightarrow G \longrightarrow 1$$
with
$$\bZ \times_\tau G=\{(m,a)\,\vert\,m \in \bZ,a \in G\}~,~
(m,a)(n,b)=(m+n+\tau(a,b),ab)~.$$

A (finite dimensional) Lie group $G$ has a classifying space\index{classifying space} 
$BG$ as a topological group. The singular cohomology $H^2(BG;\bZ)$ classifies 
covering groups\index{covering group}
$$ 1 \longrightarrow \bZ \longrightarrow \tilde G \longrightarrow G \longrightarrow 1 $$
where $\tilde G$ is again a Lie group.

We write $G^\delta$ 
for the same Lie group, but with the discrete topology.
Following Milnor \cite{Milnor:1983a} 
(see also Thurston \cite{Thurston:1974a}) we write $\bar G$ for
the homotopy fibre of $G^\delta \to G$. The goal of this section
is to recap some of what is known about the natural map 
$H^2(BG;\bZ)\to H^2(BG^\delta;\bZ)$.

\begin{eg}
Let $G=\bR$, the additive group of the real numbers. Since $G$ is 
contractible we have $\bar G = G^\delta$. \\
\indent Now $\bR$ is a $\bQ$-vector space of dimension equal to the 
cardinality of the continuum.
Since $\bQ$ is a filtered colimit of rank one free groups, 
we have $H_1(B\bQ^\delta)\cong \bQ$ and $H_i(B\bQ^\delta)=0$ 
for $i>1$. The K\"unneth theorem then gives 
$H_i(B\bR^\delta)\cong\Lambda^i_\bQ(\bR)$, the $i$th exterior
power of the reals over the rationals. This is a rational 
vector space of dimension equal to the continuum. 
By the universal coefficient theorem, 
$$ H^i(B\bR^\delta;\bZ) \cong \Ext(\Lambda^{i-1}_\bQ(\bR),\bZ), $$ 
which is again a large rational vector space (note that $\Ext(\bQ,\bZ)$ 
is already an uncountable dimensional rational vector space).
\end{eg}
\begin{eg}
Let $G=\UU(1)\cong S^1$. The universal cover of $G$ is $\bR$, so
we have a pullback square
$$ \xymatrix{\bR^\delta \ar[r] \ar[d] & \bR \ar[d] \\
G^\delta \ar[r] & G.} $$
Since $\bR$ is contractible, this implies that $\bar G=\bR^\delta$.
Thinking of $\UU(1)$ as a $K(\bZ,1)$, we have a fibre sequence
$$ K(\bZ,1)\longrightarrow K(\bR^\delta,1) \longrightarrow K(G^\delta,1) \longrightarrow K(\bZ,2). $$
We have $H_1(BG^\delta) = \bR/\bZ$, $H_1(B\bR^\delta)=\bR^\delta$.
The Gysin sequence of the fibration $S^1 \to K(\bR^\delta,1) \to K(G^\delta,1)$ is
\[ \dots  \longrightarrow H_1(BG^\delta) \longrightarrow H_2(B\bR^\delta) \longrightarrow H_2(BG^\delta)
\longrightarrow H_0(BG^\delta) \longrightarrow H_1(B\bR^\delta) \longrightarrow H_1(BG^\delta) \longrightarrow 0 \]
and therefore takes the form
\[ \dots \longrightarrow \bR/\bZ \longrightarrow  \Lambda^2_\bQ(\bR) \longrightarrow H_2(BG^\delta) \longrightarrow \bZ \longrightarrow \bR^\delta \longrightarrow G^\delta \longrightarrow 0. \]
Since there are no non-trivial homomorphisms from $\bR/\bZ$ to a
rational vector space, it follows that
\[ H_2(BG^\delta) \cong \Lambda^2_\bQ(\bR). \]
In cohomology, we have $H^1(BG^\delta;\bZ)=0$ and
\[ H^2(BG^\delta;\bZ) \cong \Ext(\bR/\bZ,\bZ) \cong \bZ \oplus \Ext(\bR,\bZ), \]
a direct sum of $\bZ$ and a large rational vector space.
\end{eg}

More generally, it is shown in Sah and Wagoner \cite[p. 623]{Sah/Wagoner:1977a} that if $G$ is a simply connected Lie group such that
the simply connected composition factors of $G$ are either $\bR$
or isomorphic to universal covering groups of Chevalley groups\index{Chevalley groups}  
over $\bR$ or $\bC$ then the integral homology $H_2(BG^\delta)$
is a $\bQ$-vector space of dimension equal to the continuum.

\begin{eg}
Let us examine the real Chevalley group $\Sp(2g,\bR)$. In this case, 
the universal cover is given by 
$$ 1 \longrightarrow \bZ \longrightarrow \widetilde{\Sp(2g,\bR)} \longrightarrow \Sp(2g,\bR) \longrightarrow 1. $$
Thus
$$ H^2(B\Sp(2g,\bR)^\delta;\bZ) \cong H^2(B\Sp(2g,\bR);\bZ) 
\oplus H^2(B\widetilde{\Sp(2g,\bR)^\delta};\bZ) $$
is a direct sum of $\bZ$ and a $\bQ$-vector space of dimension equal to
the continuum.
\end{eg}

In light of the size of $H^2(BG^\delta;\bZ)$, we clearly need to
restrict the kind of cocycles we should consider, in order to identify
the central extensions which are also covering groups;~ namely those
that are in the image of $H^2(BG;\bZ)\to H^2(BG^\delta;\bZ)$.
It is clearly no use trying to restrict to continuous cocycles;~ for example
if $G$ is connected then the only continuous cocycles are the constant ones.
So what should we try?
This problem was solved by Mackey \cite{Mackey:1957a}, as follows.

Recall that given a topological space, the Borel sets\index{Borel set} are the smallest
collection of subsets containing the open sets, and closed under
complementation and arbitrary unions. A map is a Borel map if the
inverse image of every Borel set is a Borel set.
Theorem~7.1 of Mackey's paper \cite{Mackey:1957a}
implies that under reasonably general conditions, given just the Borel sets and a group structure on $G$ consisting of Borel maps,
there is a unique structure of locally compact topological group on $G$
for which these are the Borel sets and group structure. The proof
depends on Weil's converse to Haar's theorem on measures, 
described in Appendix 1 of Weil \cite{Weil:1940a}. In particular, the
topology can be obtained by taking for the neighbourhood system of the identity
element in $G$ the family of all sets of the form $A^{-1}A$, where $A$ is a Borel
set of positive measure.

The consequence of Mackey's theorem which we wish to describe is as follows.
Given a cocycle $G\times G \to \bZ$ which is also a Borel map then
consider the covering group
$$ 1 \longrightarrow \bZ \longrightarrow \tilde G \longrightarrow G \longrightarrow 1. $$ 
The group $\tilde G$ inherits a Borel structure, as the underlying set is $G \times \bZ$.
This is compatible with the group structure, and so by Mackey's theorem
$\tilde G$ is a topological group in a unique way. Uniqueness of the
structure of topological group on $G$ implies that the topological group $\tilde G/\bZ$ is
isomorphic to $G$, and therefore the map $\tilde G \to G$ is continuous. 
It follows that $\tilde G$ is a Lie group.

\begin{defn} The {\it Borel cohomology} $H^2_B(G;\bZ)$ is the abelian group of Borel cocycles on $G^\delta$ modulo coboundaries of Borel cochains $G^\delta\to \bZ$.
\end{defn}

Milnor \cite[Theorem 1]{Milnor:1983a} proved that for any Lie group $G$ with finitely many connected components the natural 
map $H^2(BG;\bZ)\to H^2(BG^\delta;\bZ)$ is injective.

\begin{theorem}\label{th:Borel}
For any Lie group $G$ with finitely many connected components, the image of the injection $H^2(BG;\bZ)\to H^2(BG^\delta;\bZ)$ is
equal to the image of $H^2_B(G;\bZ) \to H^2(G^\delta;\bZ)$.
\end{theorem}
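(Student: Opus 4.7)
The strategy is to prove the two inclusions between the images separately, treating $H^2_B(G;\bZ)\to H^2(BG^\delta;\bZ)$ as the target and showing the images of the two maps coincide. Throughout I will identify a class in $H^2(BG;\bZ)$ with the isomorphism class of a central extension of Lie groups $1\to\bZ\to\tilde G\to G\to 1$, and a class in $H^2_B(G;\bZ)$ with the cohomology class of a Borel measurable cocycle.

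\textbf{Inclusion $\subseteq$.} Let $\alpha\in H^2(BG;\bZ)$ correspond to a Lie group covering $p\colon\tilde G\to G$ with kernel $\bZ$. To produce a cocycle representing the image of $\alpha$ in $H^2(BG^\delta;\bZ)$ one chooses a set-theoretic section $s\colon G\to\tilde G$ and defines $\tau(g,h)=s(g)s(h)s(gh)^{-1}\in\bZ$. The point is to choose $s$ to be a Borel map. This can be done because $\tilde G\to G$ is a principal $\bZ$-bundle with discrete fibre, so $G$ is covered by open sets over which the bundle is continuously trivial; since $G$ is second countable, locally compact and Hausdorff, a countable refinement and a Borel partition of $G$ subordinate to it produces a global Borel section. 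The resulting $\tau$ is then Borel, hence defines a class in $H^2_B(G;\bZ)$, and by construction this class maps to the image of $\alpha$ in $H^2(BG^\delta;\bZ)$.

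\textbf{Inclusion $\supseteq$.} Conversely, let $\tau\colon G\times G\to\bZ$ be a Borel cocycle. Form the central extension of abstract groups $\tilde G=\bZ\times_\tau G$ as in the beginning of this section. The underlying set $\bZ\times G$ carries the product Borel structure, the group operation $(m,a)(n,b)=(m+n+\tau(a,b),ab)$ is a Borel map because $\tau$ is Borel, and similarly for inversion. By the theorem of Mackey \cite[Theorem 7.1]{Mackey:1957a} cited above, this Borel group structure comes from a unique locally compact second countable topological group structure on $\tilde G$. The quotient map $\tilde G\to G$ is a Borel group homomorphism whose target already carries a locally compact topology; uniqueness in Mackey's theorem, applied to the quotient Borel group structure on $G$, forces this map to be continuous. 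Since $G$ is a Lie group and $\bZ$ is discrete, the locally compact group $\tilde G$ has no small subgroups, and therefore by the solution of Hilbert's fifth problem it is itself a Lie group, giving a covering of Lie groups $1\to\bZ\to\tilde G\to G\to 1$. The corresponding class in $H^2(BG;\bZ)$ clearly maps to the class of $\tau$ in $H^2(BG^\delta;\bZ)$, since the cocycle attached to a Borel section of $\tilde G\to G$ differs from $\tau$ only by a Borel coboundary.

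\textbf{Main obstacle.} The routine checks are that the cocycle $\tau$ and the extension $\tilde G$ really do match up under the two constructions, so that one direction is inverse to the other modulo coboundaries. The substantive input is the invocation of Mackey's theorem, together with the no-small-subgroups argument promoting the resulting locally compact group to a Lie group; this is where the hypothesis that $G$ has finitely many components enters, ensuring we can apply the standard Lie group criteria to $\tilde G$. The existence of a Borel section in the first half is standard for principal bundles with discrete fibre over a second countable locally compact base, and is the other place where continuity/measurability interact non-trivially.
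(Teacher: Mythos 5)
Your proof is correct in outline, but it takes a genuinely different route from the paper's. The paper's proof is essentially a citation: it invokes (from Wigner, Moore, and Chatterji--Mislin--de Cornulier--Pittet) the isomorphism $H^2_B(G;\bZ)\cong H^2(BG;\bZ)$, after which the claimed equality of images is immediate from the commutative square
\[ \xymatrix{H^2(BG;\bZ) \ar[r] \ar[d]^\cong & H^2(BG^\delta;\bZ) \ar[d]^\cong \\ H^2_B(G;\bZ) \ar[r] & H^2(G^\delta;\bZ).} \]
You instead prove the weaker statement that the two images coincide by a direct two-sided construction: Lie-group extension $\Rightarrow$ Borel cocycle via a Borel section of the covering, and Borel cocycle $\Rightarrow$ Lie-group extension via Mackey's theorem. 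The second construction is exactly the one the paper describes informally in the paragraph \emph{preceding} the theorem, but the paper does not use it in the actual proof; you are, in effect, turning that motivating discussion into the proof itself. Your approach is more self-contained and avoids appealing to the full strength of Wigner's isomorphism, at the cost of relying on several compatibility claims (that a Lie-group extension and its Borel cocycle have the same image in $H^2(BG^\delta;\bZ)$, and that the two constructions invert each other up to coboundary) that you assert rather than verify.

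Two points in the $\supseteq$ direction deserve more care. First, the step ``since $G$ is a Lie group and $\bZ$ is discrete, $\tilde G$ has no small subgroups'' silently assumes that the central $\bZ\leq\tilde G$ is discrete in the subspace topology, which is not automatic from Mackey's theorem. It is nonetheless true: the kernel is a closed (hence locally compact) countable subgroup, and a countable locally compact Hausdorff group is discrete by Baire category; but this should be said. Once the kernel is discrete, you can also skip the Gleason--Yamabe machinery, since $\tilde G\to G$ is then an open continuous homomorphism with discrete kernel, hence a covering map onto a Lie group, and $\tilde G$ inherits a compatible smooth structure directly. Second, the finiteness of $\pi_0(G)$ is used not so much for the Lie-group criterion (NSS is local and insensitive to the component count) as to guarantee that $G$ is second countable, which is what both the existence of a Borel section and Mackey's theorem require.
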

\begin{proof} The proof is immediate by the commutativity of the following diagram:
$$ \xymatrix{H^2(BG;\bZ) \ar[r] \ar[d]^\cong & H^2(BG^\delta;\bZ) \ar[d]^\cong \\
H^2_B(G;\bZ) \ar[r] & H^2(G^\delta;\bZ).} $$
This can be found in \cite[below Theorem 2.1]{ChatterjiMislindeCornulierPittet:2013a}. It follows from work of Wigner \cite{wigner}, and can be best understood by combining \cite[Theorem 10]{moore} (see also \cite[Theorem 38]{ChatterjiMislinPittetSaloff-Coste}), and \cite[p. 1518]{baaj-skandalis-vaes}.
\end{proof}


\section{Cocycles on $S^{1\delta}$}\label{se:Cocycles on S1}

We are concerned in this section with certain Borel cocycles $\tau\colon S^{1\delta}\times S^{1\delta} \to \bZ$
on the (discrete) circle group ${S^{1\delta}}$, and the covering groups they define. We shall think  of $S^{1\delta}$ as the unit interval $[0,1]$ with the
endpoints identified, using the group isomorphism
$$[0,1]/(0 \sim 1)\xymatrix{\ar[r]^-{\cong}&} S^{1\delta}~~;~~a \longmapsto e^{2\pi ia}$$
as an identification. 

The ``standard cocycle''\index{standard cocycle} is given by
$$ \tau \colon[0,1) \times [0,1) \longrightarrow \bZ~~;~~(a,b) \longmapsto  \tau(a,b) = \begin{cases} 0 & \text{if } 0 \leqslant a+b < 1, \\ 1 & \text{if } 1 \leqslant a+b < 2. \end{cases}  $$
This defines the extension
$$ 1 \longrightarrow \bZ \longrightarrow \bR^\delta \longrightarrow S^{1\delta} \longrightarrow 1. $$
We shall say that a $1$-cochain $f$ is \emph{nice} if it is piecewise constant. In other words, 
$S^1$ is divided into a finite number of intervals with open or closed ends, on 
each of which $f$ is constant. A coboundary $\tau$ is \emph{nice}
if it is the coboundary of a nice $1$-cochain, and a $2$-cocycle is
\emph{nice}\index{nice cocycle} if it is the sum of a nice 
coboundary and an integer multiple $m$ of the standard cocycle. Every nice $2$-cocycle defines a topological covering 
group of $S^1$, and by Theorem \ref{th:Borel} and Proposition \ref{pr:UgSp2gR} below, the quotient of the nice cocycles by the
nice coboundaries gives $H^2_B(S^1;\bZ) \cong \bZ$, with generator the class $[\tau]$ of the standard cocycle. 
The integer $m$, which we call the \emph{covering number}\index{covering number} of the cocycle $\tau$, is thus well defined. If $m=0$ then the
covering is trivial:
$$ 1 \longrightarrow \bZ \longrightarrow S^{1\delta} \times \bZ \longrightarrow S^{1\delta} \longrightarrow 1, $$
while if $m\ne 0$, the covering takes the form
$$ 1 \longrightarrow \bZ \longrightarrow \bR^\delta \times_{m\tau} \bZ/m \longrightarrow S^{1\delta}\longrightarrow 1, $$
with the group structure on $\bR^\delta\times_{m\tau}\bZ/m=\bR^\delta\times\bZ/m$ given by
$$(\bR^\delta\times_{m\tau}\bZ/m) \times (\bR^\delta\times_{m\tau}\bZ/m) \longrightarrow\bR^\delta\times_{m\tau}\bZ/m;((a,x),(b,y)) \longmapsto (a+b+m\tau(x,y),xy)~.$$

We can draw a picture of a nice cocycle $\tau$ in an obvious way, by dividing 
$[0,1]\times [0,1]$ into subregions where $\tau$ is constant.

\begin{eg} The standard cocycle gives the following picture:
{\small
\begin{center}
\begin{picture}(80,80)
\put(10,10){\line(1,0){60}}
\put(10,70){\line(1,0){60}}
\put(10,10){\line(0,1){60}}
\put(70,10){\line(0,1){60}}
\put(10,70){\line(1,-1){60}}
\put(25,25){$0$}
\put(50,50){$1$}
\end{picture}
\end{center}
}
\noindent
This is a Borel cocycle on $S^{1\delta}$ because it is piecewise constant on Borel subsets.
\end{eg}

\begin{theorem}\label{th:S1cocycle}
The picture of a nice cocycle has only horizontal, vertical and leading diagonal boundary lines. The covering number
is independent of the values on the horizontal and vertical lines, i.e. can be
computed from the values on diagonal lines only. For each diagonal
line segment, we compute 
\[
d=(a-b)\times r,
\]
where 
\begin{itemize}
\item $a=\mbox{value of cocycle above the line segment}$,
\item $b=\mbox{value of cocycle below the line segment}$,
\item $r=\mbox{ratio of the length of the line segment to the length of the main diagonal}$.
\end{itemize}
Adding these quantities $d$ over all diagonals gives the covering number.
\end{theorem}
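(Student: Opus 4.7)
The plan is to use the canonical decomposition $\tau = df + m\tau_0$ of every nice cocycle, where $f\colon S^1 \to \bZ$ is a piecewise constant $1$-cochain, $\tau_0$ is the standard cocycle and $m \in \bZ$ is the covering number. Writing $D(\tau)$ for the diagonal sum $\sum_{\text{diagonals}}(a-b)\,r$ that the theorem computes, linearity of $D$ in $\tau$ reduces the desired identity $D(\tau)=m$ to the two assertions $D(\tau_0)=1$ and $D(df)=0$. That the coefficient $m$ is well-defined (independent of the particular decomposition) is already encoded in the isomorphism $H_B^2(S^1;\bZ) \cong \bZ$ recorded just before the theorem.

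I would begin by pinning down the jump locus of $\tau$. The picture of $\tau_0$ contains just the single leading-diagonal line $s+t=1$. For a coboundary, the formula $df(s,t)=f(s)+f(t)-f((s+t)\bmod 1)$ can only be discontinuous where one of its three arguments $s$, $t$ or $(s+t)\bmod 1$ lies at a jump point of $f$; these produce respectively a vertical line, a horizontal line, and a leading-diagonal line of the form $s+t=c$ or $s+t=1+c$ (for $c\in(0,1)$ a jump point of $f$), or $s+t=1$ (for a jump of $f$ at $0 \in S^1$). Superimposing the two contributions proves the first assertion about the allowed directions of boundary lines.

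Now I would evaluate $D$ on each summand. For $\tau_0$ the unique diagonal has $r=1$ and jump $1-0 = 1$, so $D(\tau_0)=1$. For $df$, let $\delta_c := f(c^+)-f(c^-)$ denote the jump of $f$ at $c \in S^1$. A direct computation from the coboundary formula shows that the jump across $s+t=c$ is $-\delta_c$ (with ratio $r=c$) and across $s+t=1+c$ is again $-\delta_c$ (with ratio $r = 1-c$), contributing $(-\delta_c)\bigl(c + (1-c)\bigr) = -\delta_c$ in total; a jump at $0 \in S^1$ contributes $-\delta_0$ across $s+t=1$ with $r=1$. Summing over all jump points,
\[
D(df) \;=\; -\sum_{c\in S^1}\delta_c \;=\; 0,
\]
the last equality because $f$ is single-valued on $S^1$ and so the total jump around the circle vanishes. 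By linearity, $D(\tau) = D(df) + m\,D(\tau_0) = m$, which proves both the formula and that horizontal and vertical boundary lines play no role.

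The subtlest point I anticipate is bookkeeping when two distinct jumps of $f$ produce segments on the same leading-diagonal line, or when the contributions of $\tau_0$ and $df$ superimpose along $s+t=1$. This is handled uniformly by the linearity of $D$: one evaluates on each of $df$ and $m\tau_0$ separately before summing, so no ambiguity arises. A small additional care is needed to check that the formula $(a-b)\,r$ really does add correctly across coincident segments, but since $D$ is defined segment by segment and the jump values $a-b$ of the two summands add as integers, the resulting total on any overlapping segment is exactly the sum of the two partial contributions, justifying the linearity argument.
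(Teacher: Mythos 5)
Your proof is correct and follows essentially the same strategy as the paper: appeal to additivity of the diagonal sum $D$, check $D(\tau_0)=1$ on the standard cocycle, and show $D(\delta f)=0$ for a nice coboundary because the jumps of $f$ sum to zero around $S^1$, each jump at $c$ contributing across the diagonal segments $s+t=c$ and $s+t=1+c$ with normalised lengths $c$ and $1-c$ summing to $1$. Your write-up is slightly more explicit about the signs, about why the only boundary lines are horizontal, vertical or leading-diagonal, and about the well-definedness of $m$, but the core argument is the paper's.
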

\begin{proof}
The quantity described in the theorem is additive on nice cocycles, so we
only need to check the theorem on the standard cocycle and on coboundaries.
For the standard cocycle, there is just one diagonal line segment, and the difference
in the values from below to above the line segment is one, so the theorem is true in
this case. For a nice coboundary $\delta f$, the diagonal lines happen where $f(a+b)$ 
changes in value. For each point in the unit interval where $f$ changes value, the
total normalised length of the one or two diagonal line segments representing the change
in value of $f(a+b)$ is equal to one. So we are adding the changes in value of $f$ as
$f$ goes once round $S^1$. The total is therefore zero.
\end{proof}

Notice that in this theorem, the values of $\tau$ on the boundaries of regions
are irrelevant to the computation of the covering number.

\begin{eg}\label{eg:two}
For any $p,q \in \bZ$ define the nice cochain
$$f(p,q)\colon[0,1) \longrightarrow \bZ~~;~~a \longmapsto \begin{cases} p&{\rm if}~0 \leqslant a < 1/2,\\
 q&{\rm if}~1/2 \leqslant a <1\phantom{,}
 \end{cases}$$
 with nice coboundary
 $$ \delta f(p,q)\colon[0,1) \times [0,1) \longrightarrow \bZ~~;~~
 (a,b) \longmapsto  f(a)+f(b)-f(a+b)~.$$
  For any $m \in \bZ$ the picture of the nice cocycle $m(\text{standard})+\delta f(p,q)$ has 8 regions
 $$\begin{tikzpicture}[scale=1.5]
\draw (0,0) -- (4,0) -- (4,4) -- (0,4) -- (0,0);
\draw (0,4) -- (4,0);
\draw (2,0) -- (2,4);
\draw (0,2) -- (4,2);
\draw (2,4) -- (4,2);
\draw (0,2) -- (2,0);
\draw (1,0.5) node{$p$};
\draw (1,1.5) node{$2p-q$};
\draw (1,2.5) node{$p$};
\draw (1,3.5) node{$m+q$};
\draw (3,0.5) node{$p$};
\draw (3,1.5) node{$m+q$};
\draw (2.7,2.5) node{$m-p+2q$};
\draw (3,3.5) node{$m+q$};
\end{tikzpicture}$$
with covering number
$$\textstyle{\frac{1}{2}}((2p-q)-p)+(m+q-p)+\textstyle{\frac{1}{2}}((m+q)-(m-p+2q))~=~ m~.$$ 
\end{eg}

\begin{eg}\label{eg:MeyerS1}
We illustrate the theorem with the two cases that will be of interest to us 
in understanding the Meyer cocycle $\tau_1$ of Definition \ref{df:Meyercocycle} for $g=1$
(Example \ref{eg:tau1})
$$\begin{array}{l}
\tau_1\colon [0,1) \times [0,1) \longrightarrow \bZ~;\\
(a,b) \longmapsto -2\sign\left(\sin(\pi a)\sin(\pi b)\sin(\pi(a+b))\right)~=~\begin{cases} -2&{\rm if}~
0< a+b<1,\\
\phantom{-}0&{\rm if}~a, b~{\rm or }~a+b=0, 1,\\
\phantom{-}2&{\rm if}~1<a+b<2
\end{cases}
\end{array}$$
and the Maslov cocycle $\tau'_1$ of Definition \ref{df:Maslovcocycle} for $g=1$ (Example \ref{eg:tauprime1})
$$\tau'_1\colon[0,1) \times [0,1) \longrightarrow \bZ~~;~~(a,b) \longmapsto
-\sign\left(\sin(2\pi a)\sin(2\pi b)\sin(2\pi(a+b))\right)~.$$
The diagrams for these are as follows:

{\small
\begin{center}
\begin{picture}(80,80)
\put(10,10){\line(1,0){60}}
\put(10,70){\line(1,0){60}}
\put(10,10){\line(0,1){60}}
\put(70,10){\line(0,1){60}}
\put(10,70){\line(1,-1){60}}
\put(22,24){$-2$}
\put(50,50){$2$}
\end{picture}
\qquad\qquad
\begin{picture}(80,80)
\put(10,10){\line(1,0){60}}
\put(10,40){\line(1,0){60}}
\put(10,70){\line(1,0){60}}
\put(10,10){\line(0,1){60}}
\put(40,10){\line(0,1){60}}
\put(70,10){\line(0,1){60}}
\put(10,40){\line(1,-1){30}}
\put(10,70){\line(1,-1){60}}
\put(40,70){\line(1,-1){30}}
\put(13,14){$-1$}
\put(27.5,27.5){$1$}
\put(43,44){$-1$}
\put(57.5,57.5){$1$}
\put(43,14){$-1$}
\put(57.5,27.5){$1$}
\put(13,44){$-1$}
\put(27.5,57.5){$1$}
\end{picture}
\end{center}
}

\noindent In the terminology of Example \ref{eg:two}
$$\begin{array}{l}
\tau_1~=~4({\rm standard})+\delta f(-2,-2)~,\\[1ex]
\tau'_1~=~4({\rm standard})+\delta f(-1,-3)~,
\end{array}$$
so that the covering number is equal to $4$ in both cases.  Now
$$f(1,-1)(a)~=~\sign (\sin 2 \pi a)~,$$
so that the cocycles differ by the coboundary
$$\begin{array}{ll}
\tau_1(a,b)-\tau'_1(a,b)&=~-\delta f(1,-1)(a,b)\\[1ex]
&=~-\sign(\sin 2\pi a)-\sign(\sin 2\pi b)+\sign(\sin 2\pi(a+b))~.
\end{array}$$
and
$$[\tau_1]~=~[\tau'_1]~=~4 \in H^2_b(S^{1};\bZ)~=~\bZ~.$$
(See also Example \ref{eg:tauprime1} and Remark \ref{gilmer-masbaum}.)
\end{eg}


\section{The symplectic group $\Sp(2g,\bR)$}\label{se:Sp2gR}

In this section, we examine the homotopy type and cohomology of
$\Sp(2g,\bR)$, and use this to compare the cocycles described in
Sections \ref{se:forms} and \ref{se:Cocycles on S1}.


\begin{theorem}\label{th:max-compact}
Let $G$ be a Lie group with finitely many connected components, and let $K$
be a maximal compact subgroup.\index{maximal compact subgroup} 
Then as a topological space,
$G$ is homeomorphic to a Cartesian product $K \times \bR^d$, where $d$ is the
codimension of $K$ in $G$. In particular, the inclusion of
$K$ in $G$ is a homotopy equivalence.
\end{theorem}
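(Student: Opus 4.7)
The plan is to invoke the classical Iwasawa--Malcev structure theorem for Lie groups with finitely many components and then use that principal bundles over contractible bases are trivial.

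The key input is the following: for a Lie group $G$ with finitely many components and $K$ a maximal compact subgroup, the homogeneous space $G/K$ is diffeomorphic to $\bR^d$ with $d=\dim G - \dim K$. I would cite this from a standard reference such as Hochschild's \emph{The Structure of Lie Groups}. In the connected semisimple case it is the Cartan decomposition $G = K \cdot \exp(\mathfrak{p})$ associated to a splitting $\mathfrak{g} = \mathfrak{k} \oplus \mathfrak{p}$ at the Lie algebra level, with $\exp$ restricting to a diffeomorphism from $\mathfrak{p}$ onto $G/K$. The general connected case follows by combining the Cartan decomposition of the semisimple quotient with the Levi decomposition and the fact that the solvable radical is diffeomorphic via $\exp$ to its Lie algebra. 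The extension to finitely many components is a standard coset argument: $K$ meets every component of $G$, so $K/K_0 \cong G/G_0$ with $K_0 = K\cap G_0$ maximal compact in the identity component $G_0$.

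Given this input, the proof is completed as follows. The quotient map $p\colon G \to G/K$ is a principal $K$-bundle whose base is contractible and paracompact, so it admits a global section $s\colon G/K \to G$. This yields a homeomorphism $K \times G/K \to G$ sending $(k,x)$ to $k \cdot s(x)$, and composing with the diffeomorphism $G/K \cong \bR^d$ produces the required homeomorphism $G \cong K \times \bR^d$. Since $\bR^d$ is contractible, the inclusion $K \hookrightarrow G$ is automatically a homotopy equivalence.

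The main obstacle is the structure theorem $G/K \cong \bR^d$; I would cite this rather than reprove it, since the remaining bundle argument is entirely standard. A minor subtlety is verifying that a principal $K$-bundle over $\bR^d$ is trivial, which follows from the general fact that a fibre bundle over a paracompact contractible base is trivial.
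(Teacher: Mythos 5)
Your approach is essentially the same as the paper's: the paper simply cites Theorem~3.1 in Chapter~XV of Hochschild's \emph{The Structure of Lie Groups}, which already states the full product decomposition $G \cong K \times \bR^{d}$, so the principal-bundle detour you add (starting from the weaker fact $G/K \cong \bR^{d}$) is valid but not strictly necessary if one reads off the full statement of the cited theorem. One small technical slip in your bundle argument: given a global section $s$ of $p\colon G \to G/K$, the standard trivialisation of this right principal $K$-bundle is $(x,k) \mapsto s(x)\,k$, which is a homeomorphism $G/K \times K \to G$ for \emph{any} section. Your formula $(k,x) \mapsto k\,s(x)$ need not be bijective for an arbitrary section, since the image of $s$ is a transversal for the right cosets $gK$ but not necessarily for the left cosets $Kg$; it does happen to work for the exponential and Iwasawa sections, whose images are subgroups meeting $K$ trivially, but the cleaner route is to use the right-multiplication trivialisation and then observe $G/K \times K \cong K \times \bR^{d}$. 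Also be a little careful with the claim that the radical of a connected Lie group is diffeomorphic to its Lie algebra via $\exp$ --- this is true for a simply connected solvable group but fails already for a torus, so the reduction to the semisimple case needs the more careful argument given by Iwasawa and Malcev; since you are citing Hochschild for the structure theorem anyway, this does not affect the validity of your proof.
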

\begin{proof}
See Theorem 3.1 in Chapter XV of Hochschild \cite{Hochschild:1965a}.
\end{proof}

In the case of $\Sp(2g,\bR)$, a connected Lie group of dimension $g(2g+1)$, 
the maximal compact subgroup is
the unitary group\index{unitary group} $\UU(g)$ of dimension $g^2$. Thus as a topological space,
we have $\Sp(2g,\bR) \cong \UU(g) \times \bR^{g(g+1)}$.

\begin{prop}\label{pr:UgSp2gR}
The inclusion $\UU(g) \to \Sp(2g,\bR)$ of topological groups is a homotopy equivalence.
Thus we have $\pi_1(\Sp(2g,\bR)) \cong \bZ$ and 
$$ H^*(B\Sp(2g,\bR);\bZ)\cong \bZ[c_1,\dots,c_g], $$
where $c_i\in H^{2i}(B\Sp(2g,\bR);\bZ)=H^{2i}(B\UU(g);\bZ)$ denotes the $i$-th Chern class.
\end{prop}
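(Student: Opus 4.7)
The plan is to deduce everything from the preceding Theorem \ref{th:max-compact} together with standard facts about the cohomology of $B\UU(g)$. First I would identify $\UU(g)$ as a maximal compact subgroup of $\Sp(2g,\bR)$. This is the classical observation that a symplectic form on $\bR^{2g}$, together with a compatible positive definite inner product, determines a complex structure, and conversely: the stabilizer of all three compatible structures is $\Sp(2g,\bR)\cap \Orth(2g)\cong \UU(g)$, which is compact and can be shown to be maximal among compact subgroups (every compact subgroup fixes a positive definite inner product compatible with $\fb$, so is conjugate into $\UU(g)$). Once this identification is in hand, Theorem \ref{th:max-compact} immediately gives the homeomorphism $\Sp(2g,\bR)\cong \UU(g)\times \bR^{g(g+1)}$ and in particular that the inclusion $\UU(g)\hookrightarrow \Sp(2g,\bR)$ is a homotopy equivalence.

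Next I would pass to classifying spaces. Since the classifying space functor sends a homotopy equivalence of topological groups to a homotopy equivalence of spaces, one obtains $B\UU(g)\simeq B\Sp(2g,\bR)$, so their homotopy and cohomology agree. For the fundamental group, I would apply the determinant fibration
\[
\SU(g)\longrightarrow \UU(g)\xrightarrow{\det} \UU(1)=S^1,
\]
together with the fact that $\SU(g)$ is simply connected, to conclude from the long exact sequence of homotopy groups that $\pi_1(\Sp(2g,\bR))=\pi_1(\UU(g))\cong\bZ$.

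For the cohomology ring, I would cite the classical computation $H^*(B\UU(g);\bZ)\cong \bZ[c_1,\dots,c_g]$ with $\deg c_i=2i$. The cleanest route is the splitting principle: restricting along the maximal torus $T^g\subset \UU(g)$ gives an injection $H^*(B\UU(g);\bZ)\hookrightarrow H^*(BT^g;\bZ)=\bZ[x_1,\dots,x_g]$ with image the Weyl-group invariants $\bZ[x_1,\dots,x_g]^{\Sym_g}$, and the elementary symmetric polynomials in the $x_i$ are (up to sign) the Chern classes $c_i$. Alternatively one proceeds by induction on $g$ via the Serre spectral sequence of the sphere bundle $S^{2g-1}\to B\UU(g-1)\to B\UU(g)$ associated to the fibration $\UU(g-1)\to \UU(g)\to S^{2g-1}$.

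There is no genuine obstacle here: the statement is essentially a repackaging of known facts, and the only subtlety is the first step, namely verifying carefully that $\UU(g)$ really is a maximal compact subgroup of $\Sp(2g,\bR)$ rather than just a compact subgroup. All subsequent steps are formal consequences combined with standard computations in the cohomology of classifying spaces of compact Lie groups.
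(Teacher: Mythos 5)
Your proposal is correct and follows exactly the same route as the paper: identify $\UU(g)$ as a maximal compact subgroup of $\Sp(2g,\bR)$, invoke Theorem \ref{th:max-compact} to get the homotopy equivalence, and then quote the standard computation of $\pi_1(\UU(g))$ and $H^*(B\UU(g);\bZ)$. The paper's proof is just the one-line version of what you wrote.
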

\begin{proof}
This follows from Theorem \ref{th:max-compact} and 
well known properties of $\UU(g)$.
\end{proof}

As $\pi_1(\Sp(2g,\bR))\cong \bZ$, the universal cover is a group
$\widetilde{\Sp(2g,\bR)}$\index{Sp2gR@$\widetilde{\Sp(2g,\bR)}$}
sitting in a central extension
$$ 1 \longrightarrow \bZ \longrightarrow \widetilde{\Sp(2g,\bR)} \longrightarrow \Sp(2g,\bR) \longrightarrow 1. $$

Provided $g\geqslant 4$, 
pulling back the universal cover of $\Sp(2g,\bR)$ to the perfect group $\Sp(2g,\bZ)$
gives the universal central extension
$$ 1 \longrightarrow \bZ \longrightarrow \widetilde{\Sp(2g,\bZ)} \longrightarrow \Sp(2g,\bZ) \longrightarrow 1. $$
For more about this group, see Section \ref{Signatures of surface bundles mod $N$ for $N>8$}.

\begin{cor}
The coset space $\Sp(2g,\bR)/\Sp(2g-2,\bR)$ is homeomorphic to
a Cartesian product $S^{2g-1} \times \bR^{2g}$.\qed
\end{cor}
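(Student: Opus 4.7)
The plan is to realise the coset space as a concrete orbit of $\Sp(2g,\bR)$ on pairs of vectors in $\bR^{2g}$, and then to trivialise the resulting fibration over $S^{2g-1}$ by exploiting the stable triviality of the tangent bundle of the sphere. Under the embedding $\Sp(2g-2,\bR)\hookrightarrow\Sp(2g,\bR)$ that adjoins the symplectic pair $v_g,w_g$ as in Proposition \ref{pr:cocycles-restrict}, the subgroup $\Sp(2g-2,\bR)$ is precisely the pointwise stabiliser in $\Sp(2g,\bR)$ of the pair $(v_g,w_g)\in\bR^{2g}\times\bR^{2g}$. The Witt/Darboux extension theorem shows that $\Sp(2g,\bR)$ acts transitively on the set
$$X~=~\{(v,w)\in\bR^{2g}\times\bR^{2g}\,\vert\,\omega(v,w)=1\},$$
so the orbit map yields a homeomorphism $\Sp(2g,\bR)/\Sp(2g-2,\bR)\cong X$.

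Next, I would decompose $X$ explicitly as a product. Fix a compatible complex structure $J$ on $\bR^{2g}$ so that $\langle x,y\rangle:=\omega(x,Jy)$ is a positive definite inner product. Given $(v,w)\in X$, set $t=\|v\|>0$ and $u=v/t\in S^{2g-1}$; since $\omega(u,Ju)=1$ and $\omega(u,w)=1/t$, we may uniquely write $w=(1/t)Ju+\xi$ with $\xi\in u^{\perp_\omega}$. The assignment $(v,w)\mapsto(u,\xi,t)$ is then a homeomorphism of $X$ with $E\times\bR_{>0}$, where $E\to S^{2g-1}$ is the rank-$(2g-1)$ vector bundle whose fibre at $u$ is the symplectic hyperplane $u^{\perp_\omega}$.

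The main obstacle is that $E$ is generally \emph{not} a trivial bundle. The $J$-invariance of $\omega$ implies $u^{\perp_\omega}=(Ju)^{\perp}$ (Euclidean orthogonal), so $E$ is the pullback of $TS^{2g-1}$ along the diffeomorphism $u\mapsto Ju$ of $S^{2g-1}$, and $TS^{2g-1}$ is nontrivial unless $2g-1\in\{1,3,7\}$. The key point is that we are not required to trivialise $E$ itself, but only the product $E\times\bR$, which is the total space of the Whitney sum $TS^{2g-1}\oplus\varepsilon^1$. The latter is canonically trivial, since $S^{2g-1}\subset\bR^{2g}$ has trivial normal bundle and hence $TS^{2g-1}\oplus\varepsilon^1\cong\varepsilon^{2g}$. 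Combining the three homeomorphisms identifies $\Sp(2g,\bR)/\Sp(2g-2,\bR)$ with $S^{2g-1}\times\bR^{2g}$, as required.
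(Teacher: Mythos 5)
Your proof is correct, and it is a genuinely different (and more explicit) route than the one the paper gestures at. The paper treats the corollary as an immediate consequence of Theorem~\ref{th:max-compact}: the Cartan-type decomposition $\Sp(2g,\bR)\cong\UU(g)\times\bR^{g(g+1)}$ and $\Sp(2g-2,\bR)\cong\UU(g-1)\times\bR^{(g-1)g}$, combined with $\UU(g)/\UU(g-1)\cong S^{2g-1}$ and the dimension count $g(g+1)-(g-1)g=2g$. This is really a Mostow-type statement --- that the quotient is a vector bundle over $K/(K\cap H)$ --- and the triviality of that bundle is not formally spelled out. Your argument fills exactly this gap: you identify $\Sp(2g,\bR)/\Sp(2g-2,\bR)$ concretely as the orbit $X=\{(v,w):\omega(v,w)=1\}$ (the stabiliser computation and Witt's extension theorem are both correctly applied), then split $X$ as $E\times\bR_{>0}$ with $E$ a pullback of $TS^{2g-1}$, and finally use that the extra $\bR$-factor stably trivialises $TS^{2g-1}$ via $TS^{2g-1}\oplus\varepsilon^1\cong\varepsilon^{2g}$. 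The one place where the phrasing is slightly loose is the sentence identifying $E\times\bR$ with the total space of $TS^{2g-1}\oplus\varepsilon^1$: strictly, $E=J^*TS^{2g-1}$ and one passes through the homeomorphism of total spaces covering $u\mapsto Ju$ (or, since $J\in\UU(g)$ is connected to the identity, $E\cong TS^{2g-1}$ as bundles). That is a cosmetic remark; the argument is sound. The upshot is that your approach buys an explicit, self-contained homeomorphism, whereas the paper's approach is shorter but relies on unstated facts about the compatibility of Cartan decompositions with subgroup inclusions and triviality of the resulting bundle.
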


\begin{cor}\label{co:pi1Sp}
The inclusions 
$$ S^1=\UU(1)\longrightarrow \Sp(2,\bR) \longrightarrow \Sp(4,\bR) \longrightarrow \cdots \longrightarrow \Sp(2g,\bR) $$
induce isomorphisms on $\pi_1$.
\end{cor}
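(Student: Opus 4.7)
The plan is to proceed by induction on $g$, using the preceding corollary which identifies the coset space $\Sp(2g,\bR)/\Sp(2g-2,\bR)$ with $S^{2g-1}\times\bR^{2g}$, together with the long exact sequence of homotopy groups for the associated fiber bundle.

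For the base case $g=1$, the inclusion $S^1=\UU(1)\hookrightarrow \Sp(2,\bR)$ induces an isomorphism on $\pi_1$ directly from Proposition \ref{pr:UgSp2gR}, which states that $\UU(g)\to\Sp(2g,\bR)$ is a homotopy equivalence (applied with $g=1$).

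For the inductive step with $g\geq 2$, I would consider the fiber bundle
\[
\Sp(2g-2,\bR) \longrightarrow \Sp(2g,\bR) \longrightarrow \Sp(2g,\bR)/\Sp(2g-2,\bR),
\]
which is a principal bundle coming from the homogeneous space structure. By the preceding corollary, the base is homeomorphic to $S^{2g-1}\times\bR^{2g}$. Since $g\geq 2$, we have $2g-1\geq 3$, so $\pi_1(S^{2g-1}\times\bR^{2g})=0$ and $\pi_2(S^{2g-1}\times\bR^{2g})=\pi_2(S^{2g-1})=0$. The relevant portion of the long exact sequence in homotopy is
\[
\pi_2(S^{2g-1}\times\bR^{2g}) \longrightarrow \pi_1(\Sp(2g-2,\bR)) \longrightarrow \pi_1(\Sp(2g,\bR)) \longrightarrow \pi_1(S^{2g-1}\times\bR^{2g}),
\]
with the outer terms both zero, so the inclusion $\Sp(2g-2,\bR)\hookrightarrow\Sp(2g,\bR)$ induces an isomorphism on $\pi_1$. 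Composing the isomorphisms yields the claim for the whole chain.

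There is no serious obstacle here: the main input is already packaged in the previous corollary (the homeomorphism of the quotient with $S^{2g-1}\times\bR^{2g}$), and the only thing to verify is the vanishing of $\pi_1$ and $\pi_2$ of the base, which holds as soon as $2g-1\geq 3$, i.e. $g\geq 2$. The case $g=1$ is handled separately by the homotopy equivalence $\UU(1)\simeq\Sp(2,\bR)$ from Proposition \ref{pr:UgSp2gR}.
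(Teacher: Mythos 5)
Your proof is correct and follows essentially the same route the paper intends: the paper's one-line justification (``by the long homotopy exact sequence of a fibration'') is exactly the argument you have spelled out, using the preceding corollary's identification of $\Sp(2g,\bR)/\Sp(2g-2,\bR)$ with $S^{2g-1}\times\bR^{2g}$ to see that $\pi_1$ and $\pi_2$ of the base vanish for $g\geq 2$. Your treatment of the base case $g=1$ via the homotopy equivalence $\UU(1)\simeq\Sp(2,\bR)$ is also exactly right.
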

\begin{proof}
By the long homotopy exact sequence of a fibration.
\end{proof}

\begin{cor}\label{co:H_b^2Sp}
The Borel cohomology $H_B^2(\Sp(2g,\bR);\bZ)$ is isomorphic to $\bZ$,
with generator $c_1$. The restriction map 
$$ H_B^2(\Sp(2g,\bR);\bZ)\longrightarrow H_B^2(\Sp(2g-2,\bR);\bZ) $$ 
is an isomorphism.
\end{cor}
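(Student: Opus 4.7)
The plan is to reduce both assertions to facts about $B\Sp(2g,\bR)$ already established in this section. First, I would invoke the commutative square displayed in the proof of Theorem \ref{th:Borel}, whose vertical maps are isomorphisms, to identify $H^2_B(\Sp(2g,\bR);\bZ)$ with $H^2(B\Sp(2g,\bR);\bZ)$. Combining this with Proposition \ref{pr:UgSp2gR}, which computes $H^*(B\Sp(2g,\bR);\bZ)=\bZ[c_1,\dots,c_g]$, I immediately get $H^2_B(\Sp(2g,\bR);\bZ)\cong\bZ\cdot c_1$, proving the first claim.

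For the restriction statement, the identification $H^2_B(G;\bZ)\cong H^2(BG;\bZ)$ is natural in $G$, because the commutative square in the proof of Theorem \ref{th:Borel} is natural with respect to continuous group homomorphisms. So the restriction map in question corresponds to the map $H^2(B\Sp(2g,\bR);\bZ)\to H^2(B\Sp(2g-2,\bR);\bZ)$ induced by the inclusion $\Sp(2g-2,\bR)\hookrightarrow\Sp(2g,\bR)$.

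To show this last map is an isomorphism I would offer either of two equivalent arguments. The first uses naturality of Chern classes: under the homotopy equivalences $B\Sp(2g,\bR)\simeq B\UU(g)$ provided by Theorem \ref{th:max-compact} and Proposition \ref{pr:UgSp2gR}, the inclusion becomes the standard block-diagonal inclusion $B\UU(g-1)\hookrightarrow B\UU(g)$, under which $c_1$ pulls back to $c_1$, yielding an isomorphism of the two copies of $\bZ$ in degree $2$. The second argument avoids Chern classes and uses only Corollary \ref{co:pi1Sp}: since both groups are connected (hence $BG$ is simply connected), the Hurewicz and universal coefficient theorems give a natural identification $H^2(BG;\bZ)\cong\Hom(\pi_2(BG),\bZ)=\Hom(\pi_1(G),\bZ)$, and Corollary \ref{co:pi1Sp} tells us the inclusion induces an isomorphism on $\pi_1$, so dualising gives the result.

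The proof is essentially bookkeeping; all the substantive inputs (Milnor's injectivity, Theorem \ref{th:Borel}, Proposition \ref{pr:UgSp2gR}, Corollary \ref{co:pi1Sp}) are already in place. The only potential subtlety, which I would spell out explicitly to be safe, is that the isomorphism $H^2_B(G;\bZ)\cong H^2(BG;\bZ)$ is natural with respect to continuous group homomorphisms, so that the restriction on $H^2_B$ genuinely corresponds to the restriction on singular cohomology of classifying spaces.
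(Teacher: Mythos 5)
Your proposal is correct, and for the restriction map it takes a genuinely different route from the paper. The paper's proof establishes the first claim exactly as you do (via Proposition \ref{pr:UgSp2gR} and the identification of Theorem \ref{th:Borel}), but then settles the restriction claim by invoking the Serre spectral sequence of a fibration together with the fact that $\Sp(2g,\bR)/\Sp(2g-2,\bR)$ is $(2g-2)$-connected. Your two alternatives bypass the spectral sequence: the Chern-class argument reduces everything to the standard fact that $B\UU(g-1)\hookrightarrow B\UU(g)$ pulls $c_1$ back to $c_1$, while your Hurewicz/universal-coefficients argument uses only Corollary \ref{co:pi1Sp} (which itself comes from the long exact sequence of the same fibration) to identify $H^2(B\Sp(2g,\bR);\bZ)\cong\Hom(\pi_1(\Sp(2g,\bR)),\bZ)$. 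All three are sound; your second argument is arguably the most elementary, since it trades the spectral sequence for Hurewicz plus the universal coefficient theorem, but it draws on the same underlying connectivity input. The one place where your write-up is more careful than the paper's is the explicit remark that the identification $H^2_B(G;\bZ)\cong H^2(BG;\bZ)$ is natural in $G$; the paper leaves this implicit, and it is needed to know that the two notions of ``restriction map'' agree. For the Chern-class variant, it is worth noting (as you half-acknowledge) that one should check the inclusion $\Sp(2g-2,\bR)\hookrightarrow\Sp(2g,\bR)$ is compatible with the chosen maximal compacts so that it really does model the block inclusion $\UU(g-1)\hookrightarrow\UU(g)$ up to homotopy; this is true but deserves a sentence.
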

\begin{proof}
Proposition \ref{pr:UgSp2gR} contains the information that $H_B^2(\Sp(2g,\bR);\bZ)\cong\bZ$. The claim on the restriction map follows from the Serre spectral sequence of a fibration and the fact that $\Sp(2g,\bR)/\Sp(2g-2,\bR)$ is $(2g-2)$-connected for $g\geq 2$.
\end{proof}

\begin{theorem}\label{th:tautau'}
The cohomology class in $H^2_B(\Sp(2g,\bR);\bZ) \cong H^2(B\Sp(2g,\bR);\bZ)\cong \bZ$ defined by the
Meyer cocycle $\tau_g$ and the Maslov cocycle $\tau'_g$ are both equal to $4c_1$.
\end{theorem}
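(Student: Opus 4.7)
The plan is to exploit the restriction isomorphisms established in Corollaries~\ref{co:H_b^2Sp} and \ref{co:pi1Sp} in order to reduce the computation all the way down to the subgroup $S^1 \cong \UU(1) \hookrightarrow \Sp(2,\bR)$, where the explicit diagrammatic calculations of Section~\ref{se:Cocycles on S1} apply directly. The Meyer and Maslov classes are both defined stably in $g$, and so is the class $c_1$, so it is natural to exploit restriction as much as possible.

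The first reduction uses Corollary~\ref{co:H_b^2Sp}, which says that the restriction $H^2_B(\Sp(2g,\bR);\bZ) \to H^2_B(\Sp(2,\bR);\bZ)$ is an isomorphism sending $c_1$ to $c_1$. Combined with Proposition~\ref{pr:cocycles-restrict}, which asserts that $\tau_g$ and $\tau'_g$ restrict to $\tau_1$ and $\tau'_1$ respectively, it suffices to prove $[\tau_1] = [\tau'_1] = 4c_1$ in $H^2_B(\Sp(2,\bR);\bZ)$. Next, via the homotopy equivalence $\UU(1) \hookrightarrow \Sp(2,\bR)$ from Proposition~\ref{pr:UgSp2gR} together with Theorem~\ref{th:Borel}, the induced restriction $H^2_B(\Sp(2,\bR);\bZ) \to H^2_B(S^1;\bZ)$ is an isomorphism between copies of $\bZ$; it sends $c_1$ to the class of the standard cocycle, under the normalization of $c_1$ as the first Chern class of the tautological line bundle over $B\UU(1)$, whose corresponding principal $\bZ$-bundle is precisely the universal cover $\bR \to S^1$.

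Finally, I would observe that under the identification $S^{1\delta} \cong [0,1)$ used throughout Section~\ref{se:Cocycles on S1}, the restrictions of $\tau_1$ and $\tau'_1$ to the circle are exactly the two explicit nice cocycles pictured in Example~\ref{eg:MeyerS1}. Theorem~\ref{th:S1cocycle}, applied to those diagrams (or the decompositions into standard cocycles plus coboundaries $\delta f(-2,-2)$ and $\delta f(-1,-3)$ given in Example~\ref{eg:two}), yields that both cocycles have covering number $4$. Hence $[\tau_1] = [\tau'_1] = 4 \in H^2_B(S^1;\bZ) \cong \bZ$. Transporting this equality back through the chain of isomorphisms established above gives $[\tau_g] = [\tau'_g] = 4c_1$.

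The main obstacle is bookkeeping rather than mathematics: one must verify carefully that under the isomorphism $H^2_B(S^1;\bZ) \cong H^2(BS^1;\bZ)$ of Theorem~\ref{th:Borel}, the class of the standard cocycle corresponds to $+c_1$ rather than $-c_1$. This is a sign/orientation convention check identifying the extension $1 \to \bZ \to \bR^\delta \to S^{1\delta} \to 1$ with the chosen orientation of $c_1$. Once this normalization is fixed consistently with the choice of sign in the definition of the Wall--Maslov index (which determines the overall sign in Examples~\ref{eg:tau1} and \ref{eg:tauprime1}), the rest of the argument is a transparent application of the preceding structural results.
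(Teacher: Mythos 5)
Your proposal is correct and follows essentially the same route as the paper: reduce via Proposition~\ref{pr:cocycles-restrict} and Corollaries~\ref{co:pi1Sp}, \ref{co:H_b^2Sp} to the subgroup $S^1=\UU(1)\subseteq\Sp(2,\bR)$, then read off the covering number $4$ for both cocycles from Example~\ref{eg:MeyerS1} and Theorem~\ref{th:S1cocycle}. Your extra remark on verifying that the standard cocycle corresponds to $+c_1$ rather than $-c_1$ is a genuine normalization point that the paper's own proof leaves implicit, so it is a welcome clarification rather than a deviation.
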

\begin{proof}
Remember from Theorem \ref{th:Borel} that $H_B^2(\Sp(2g, \bR); \bZ)\cong H^2(B\Sp(2g,\bR);\bZ)$,  which is $\bZ$ by Corollary \ref{co:H_b^2Sp}, and that it injects into $H^2(\Sp(2g, \bR)^\delta; \bZ)$.
It follows from Proposition \ref{pr:cocycles-restrict} and Corollaries \ref{co:pi1Sp}, \ref{co:H_b^2Sp}
that it suffices to prove the second statement for the subgroup 
$S^1=\UU(1)\subseteq \Sp(2,\bR)$.
In this case, the Meyer cocycle is represented by the first diagram
in Example \ref{eg:MeyerS1}, while the Maslov cocycle is represented by the second diagram.
According to Theorem \ref{th:S1cocycle}, these both have covering number $4$.
The standard cocycle on $S^{1\delta}$ is Borel and has covering number $1$ (see beginning of Section \ref{se:Cocycles on S1}). By Corollary \ref{co:H_b^2Sp} it is thus the restriction to $S^{1\delta}$ of a cocycle on $\Sp(2,\bR)$ representing $c_1$.
\end{proof}


\section{Signature of surface bundles modulo an integer}\label{se:proofs}

\subsection{Signatures of surface bundles mod $N$ for $N\leq 8$}\label{N<=8}
In \cite{BensonCampagnoloRanickiRovi:2017a} we showed the existence of a cohomology class in the second cohomology group of a finite quotient $\fH$ of $\Sp(2g, \bZ)$ that computes the mod 2 reduction of signature/4 for a surface bundle over a surface.

We recall this theorem in detail hereafter. It chases the cohomology class from the finite group
$\fH=\Sp(2g,\bZ)/\fK$ to Meyer's class on $\Sp(2g,\bZ)$.
We include details of low genus cases, which makes the theorem a
little hard to follow, so afterwards we summarise the situation
in a diagram valid for
$g\geqslant 4$.

As in \cite{BensonCampagnoloRanickiRovi:2017a} we write $\Gamma(2g,N)\unlhd\Sp(2g,\bZ)$ for the \emph{principal congruence subgroup}
consisting of symplectic matrices which are congruent to the identity
modulo $N$.
\begin{defn} \label{def:fK and fY}
For $g\geqslant 1$ we write $\fK$ (resp. $\fY$) for the subgroup of $\Sp(2g,\bZ)$ (resp. $\Sp(2g,\bZ/4)$) consisting of
matrices
\[ \begin{pmatrix} I+2\mra & 2\mrb \\ 2\mrc & I+2\mrd \end{pmatrix} \in
  \Sp(2g,\bZ)\, {\rm (}\mbox{resp. } \Sp(2g,\bZ/4){\rm )} \]
satisfying:
\begin{enumerate}
\item[\rm (i)] The vectors of diagonal entries $\Diag(\mrb)$ and $\Diag(\mrc)$ are even, and
\item[\rm (ii)] the trace $\Tr(a)$ is even (equivalently, $\Tr(d)$ is even).
\end{enumerate}
\end{defn}


\begin{lemma}\label{i: K, H}
\begin{enumerate}[\rm (i)]
\item Let $g \geq 2$. The group $\fK$  (resp. $\fY$) is a normal subgroup of $\Sp(2g,\bZ)$ (resp. $\Sp(2g,\bZ/4)$). We write
$\fH$ for the quotient $\Sp(2g,\bZ)/\fK\cong\Sp(2g,\bZ/4)/\fY$.
\item  The quotient $\Gamma(2g,2)/\fK \leq \fH$ is an elementary abelian
  $2$-group $(\bZ/2)^{2g+1}$.

\end{enumerate}
\end{lemma}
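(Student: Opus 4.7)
The plan is to start from the observation that $\fK$ is the preimage of $\fY$ under the reduction map $p\colon\Sp(2g,\bZ)\to\Sp(2g,\bZ/4)$. Indeed every element of $\fK$ has the form $I+2M$ and therefore lies in $\Gamma(2g,2)$, and the evenness conditions of Definition~\ref{def:fK and fY} only depend on $\mra,\mrb,\mrc,\mrd$ modulo $2$, equivalently on the class of $I+2M$ modulo $4$. Consequently, normality of $\fK\trianglelefteq\Sp(2g,\bZ)$ is equivalent to normality of $\fY\trianglelefteq\Sp(2g,\bZ/4)$, and both descriptions yield the same quotient, which we call $\fH$.

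To prove that $\fY$ is normal, I would take $\alpha\in\Sp(2g,\bZ/4)$ and $\kappa=I+2M\in\fY$ and compute $\alpha\kappa\alpha^{-1}=I+2\alpha M\alpha^{-1}$. Modulo $2$, this conjugation coincides with the usual adjoint action of $\bar\alpha\in\Sp(2g,\bF_2)$ on the ``mod-$2$ Lie algebra''
$$\Gamma(2g,2)/\Gamma(2g,4)\;\cong\;\fsp(2g,\bF_2)\;=\;\bigl\{M\in\Mat(2g,\bF_2)\ \big|\ \mrb=\mrb^T,\ \mrc=\mrc^T,\ \mrd=\mra^T\bigr\}.$$
It therefore suffices to check that the subspace cut out by $\Diag(\mrb)=\Diag(\mrc)=0$ and $\Tr(\mra)=0$ is stable under this adjoint $\Sp(2g,\bF_2)$-action. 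The plan is to do this by a direct block-matrix computation: writing $\bar\alpha=\begin{pmatrix}A&B\\C&D\end{pmatrix}$ and expanding $\bar\alpha M\bar\alpha^{-1}$, one reads off the diagonal entries of the new $\mrb$- and $\mrc$-blocks and the trace of the new $\mra$-block as $\bF_2$-linear combinations of the original $\Diag(\mrb),\Diag(\mrc),\Tr(\mra)$. The symplectic constraint $\mrd\equiv\mra^T\pmod 2$ (giving $\Tr(\mrd)=\Tr(\mra)$) together with the symmetries $\mrb=\mrb^T$, $\mrc=\mrc^T$ and the standard relations imposed on the blocks of $\bar\alpha$ and $\bar\alpha^{-1}$ by $\bar\alpha\in\Sp(2g,\bF_2)$ are what make this closure under conjugation work. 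This equivariance verification is the step I expect to be the main technical obstacle, as it is the only part that is not purely formal.

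For part (ii), I would first note that $\Gamma(2g,4)\subseteq\fK$: an element $I+4N$ has $\mra,\mrb,\mrc,\mrd$ all divisible by $2$, so all three evenness conditions are automatic. Hence $\Gamma(2g,2)/\fK$ is a quotient of the elementary abelian $2$-group $\Gamma(2g,2)/\Gamma(2g,4)$, and is therefore itself elementary abelian. Under the identification $\Gamma(2g,2)/\Gamma(2g,4)\cong\fsp(2g,\bF_2)$ recalled above, the subgroup $\fK/\Gamma(2g,4)$ is exactly the kernel of the $\bF_2$-linear map
$$\Psi\colon\fsp(2g,\bF_2)\longrightarrow(\bZ/2)^{2g+1},\qquad M\longmapsto\bigl(\Diag(\mrb),\Diag(\mrc),\Tr(\mra)\bigr).$$
The $2g+1$ target coordinates live in disjoint matrix blocks and are all unconstrained in $\fsp(2g,\bF_2)$ (the diagonal entries of a symmetric matrix over $\bF_2$ are free, and $\mra$ has no symmetry constraint), so $\Psi$ is surjective and induces the desired isomorphism $\Gamma(2g,2)/\fK\cong(\bZ/2)^{2g+1}$.
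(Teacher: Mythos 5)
Your architecture is sound and lines up with the approach of the cited source: reduce normality of $\fK$ to that of $\fY$ via the preimage observation $\fK=p^{-1}(\fY)$ (together with surjectivity of $p\colon\Sp(2g,\bZ)\to\Sp(2g,\bZ/4)$), identify $\Gamma(2g,2)/\Gamma(2g,4)\cong\fsp(2g,\bF_2)$, and realise $\fK/\Gamma(2g,4)$ as the kernel of the surjective $\bF_2$-linear map $\Psi(M)=\bigl(\Diag(\mrb),\Diag(\mrc),\Tr(\mra)\bigr)$. Part (ii) is essentially complete as written. The paper itself just cites the companion paper for both parts, so your sketch is a useful self-contained substitute.

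The genuine gap is in part (i): you announce, but do not carry out, the verification that $\Ker\Psi$ is stable under conjugation by $\Sp(2g,\bF_2)$. This is not a side issue --- it is the entire content of normality, on which the very definition of $\fH$ rests --- so the proof is not finished until it is done. Reassuringly, the block computation does close, and the two $\bF_2$-identities that make it painless are worth recording: for $S$ symmetric and $X$ arbitrary, $\Diag(XSX^T)=X\cdot\Diag(S)$ (diagonal terms give $X_{ij}^2=X_{ij}$, off-diagonal terms cancel in pairs), and for $S,T$ both symmetric, $\Tr(ST)=\Diag(S)\cdot\Diag(T)$. Writing $\alpha=\begin{pmatrix}A&B\\C&D\end{pmatrix}$ with $\alpha^{-1}=\begin{pmatrix}D^T&B^T\\C^T&A^T\end{pmatrix}$, and using the mod-$2$ symplectic relations that $A^TC$ and $B^TD$ are symmetric and $D^TA+B^TC=I$, a direct expansion of $\alpha M\alpha^{-1}$ gives $\Diag(\mrb')=A\,\Diag(\mrb)+B\,\Diag(\mrc)$, $\Diag(\mrc')=C\,\Diag(\mrb)+D\,\Diag(\mrc)$, and $\Tr(\mra')=\Tr(\mra)+\Diag(C^TA)\cdot\Diag(\mrb)+\Diag(D^TB)\cdot\Diag(\mrc)$. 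These formulas simultaneously prove the required stability of $\Ker\Psi$ (hence normality) and identify $\Gamma(2g,2)/\fK\cong(\bZ/2)^{2g+1}$ as the natural module for $\Sp(2g,\bF_2)\cong\Orth(2g+1,\bF_2)$, the viewpoint the paper invokes later when analysing $Z(\fH)$. You should supply this computation (or a check on a generating set of $\Sp(2g,\bF_2)$) to complete the argument.
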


\begin{proof}
\begin{enumerate}[\rm (i)]
\item See \cite[Theorem 2.2(i)]{BensonCampagnoloRanickiRovi:2017a}.

\item See \cite[Theorem 2.2(ii)]{BensonCampagnoloRanickiRovi:2017a}.
\end{enumerate} 
\vspace*{-1.3\baselineskip}
\end{proof}

\begin{lemma}\label{braid-lemma}  \begin{enumerate}[\rm(i)] 
\item  \label{iii: commutative braids}  There are commutative braids of extensions
\index{sp2gZ2@$\fsp(2g,\bbZ{2})$}
\medskip
\[  \braid{\Gamma(2g,4)}{\Sp(2g,\bZ)}{\fH}{\fK}{\Sp(2g,\bbZ{4})}{\fY}\qquad
  \braid{\fY}{\Sp(2g,\bbZ{4})}{\Sp(2g,\bbZ{2})}{\fsp(2g,\bbZ{2})}{\fH}{(\bbZ{2})^{2g+1}}\]
  
\item \label{iv: orders} 
The extension
\[ 1 \longrightarrow (\bZ/2)^{2g+1} \longrightarrow\fH \longrightarrow \Sp(2g,\bbZ{2}) \longrightarrow 1 \]
does not split. 

\item The finite groups in the braids have orders
\[ \begin{array}{l}
\vert \Sp(2g,\bbZ{2}) \vert=2^{g^2}\prod\limits^g_{i=1}(2^{2i}-1)~,~\vert \Sp(2g,\bbZ{4}) \vert=2^{g(3g+1)}\prod\limits^g_{i=1}(2^{2i}-1)~,\\
\vert \fH \vert=2^{(g+1)^2}\prod\limits^g_{i=1}(2^{2i}-1)~,~
\vert \fY\vert =2^{(2g+1)(g-1)}~,~\vert \fsp(2g,\bbZ{2}) \vert = 2^{g(2g+1)}~.
\end{array} \]
\end{enumerate}
  \end{lemma}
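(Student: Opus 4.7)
The plan is to verify a handful of elementary containments and surjections, and then invoke the $3\times 3$-lemma twice. For the first braid, the chain $\Gamma(2g,4)\subseteq \fK\subseteq \Sp(2g,\bZ)$ is immediate from Definition \ref{def:fK and fY}, since any $I+4M\in\Gamma(2g,4)$ automatically satisfies the three parity conditions defining $\fK$. The image of $\fK$ under reduction mod $4$ is $\fY$ by construction of the latter, and the kernel of this surjection is $\fK\cap\Gamma(2g,4)=\Gamma(2g,4)$. Combined with the common identification $\fH\cong\Sp(2g,\bZ)/\fK\cong\Sp(2g,\bZ/4)/\fY$ from Lemma \ref{i: K, H}(i), this assembles the first braid. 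For the second braid, the kernel of $\Sp(2g,\bZ/4)\to\Sp(2g,\bZ/2)$ is precisely $\{I+2M : M\in\fsp(2g,\bZ/2)\}$, and $\fY$ is the subgroup cut out inside it by requiring $\mathrm{Diag}(\mrb)$, $\mathrm{Diag}(\mrc)$ and $\mathrm{Tr}(\mra)$ to vanish. The three residual parities identify $\fsp(2g,\bZ/2)/\fY$ with $(\bZ/2)^{2g+1}$, and $\Sp(2g,\bZ/4)/\fY=\fH$ then completes the second braid.

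\textbf{Part (iii)} follows numerically from the braids. I would start from the classical formula $|\Sp(2g,\bZ/2)|=2^{g^2}\prod_{i=1}^g(2^{2i}-1)$. Since $\fsp(2g,\bZ/2)$ has $\bZ/2$-dimension $g(2g+1)$, the kernel of reduction $\Sp(2g,\bZ/4)\to\Sp(2g,\bZ/2)$ has order $2^{g(2g+1)}$, yielding the formula for $|\Sp(2g,\bZ/4)|$. The second braid together with Lemma \ref{i: K, H}(ii) then forces $|\fH|=2^{2g+1}\cdot|\Sp(2g,\bZ/2)|=2^{(g+1)^2}\prod(2^{2i}-1)$, and $|\fY|=|\Sp(2g,\bZ/4)|/|\fH|=2^{(2g+1)(g-1)}$.

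\textbf{The main obstacle is (ii).} The plan is to exhibit an explicit transvection in $\Sp(2g,\bZ/2)$ none of whose lifts to $\fH$ are involutions, thereby precluding any splitting. Take the symplectic transvection $T=T_{e_1}\in\Sp(2g,\bZ/2)$, so $T=I+E$ where $E$ has a single nonzero entry equal to $1$ in position $(1,g+1)$. Its obvious lift $\tilde T=I+E\in\Sp(2g,\bZ/4)$ squares to $\tilde T^2=I+2E$, and I would first check that its class in $\fsp(2g,\bZ/2)/\fY\cong(\bZ/2)^{2g+1}$ has coordinates $(\mathrm{Diag}(\mrb),\mathrm{Diag}(\mrc),\mathrm{Tr}(\mra))=((1,0,\dots,0),0,0)\neq 0$. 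Any other lift of $T$ to $\fH$ has the form $\tilde T\cdot y$ for $y\in(\bZ/2)^{2g+1}$, and one computes $(\tilde T y)^2 = \tilde T^2\cdot(\phi(T)(y)+y)$, where $\phi$ denotes the conjugation action of $\Sp(2g,\bZ/2)$ on $(\bZ/2)^{2g+1}$. Hence $T$ admits an involutive lift if and only if $[\tilde T^2]$ lies in the image of $y\mapsto \phi(T)(y)+y$. A direct computation of $TMT^{-1}-M$ for a general $M=\begin{pmatrix}\mra&\mrb\\\mrc&\mra^T\end{pmatrix}\in\fsp(2g,\bZ/2)$, followed by reduction mod $\fY$, shows that this image is the line $\{((x,0,\dots,0),0,x):x\in\bZ/2\}$, which does not contain the class $((1,0,\dots,0),0,0)$. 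Hence no lift of $T$ is an involution, and the extension cannot split. The bookkeeping in the conjugation calculation is the main technical step, but it reduces to tracking a few products of $2\times 2$-block matrices.
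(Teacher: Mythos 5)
Your verification of the two braids in (i) and the order bookkeeping in (iii) are essentially the same argument as the paper's (modulo the paper sometimes citing \cite{BensonCampagnoloRanickiRovi:2017a} for facts you reprove directly), and they are correct. Where you genuinely diverge is (ii): the paper simply cites \cite[Theorem 2.2(iii)]{BensonCampagnoloRanickiRovi:2017a} for the non-splitting, whereas you give a self-contained obstruction argument by exhibiting an involution in $\Sp(2g,\bZ/2)$ with no involutive lift to $\fH$. I checked the calculation: with $T=I+E$, $E$ the elementary matrix with a $1$ in position $(1,g{+}1)$, the lift $\tilde T=I+E\in\Sp(2g,\bZ/4)$ squares to $I+2E$, whose image in $\fsp(2g,\bZ/2)/\fY\cong(\bZ/2)^{2g+1}$ indeed has coordinates $((1,0,\dots,0),0,0)$. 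The identity $(\tilde Ty)^2=\tilde T^2\cdot(\phi(T)(y)+y)$ holds because $T^{-1}=T$ in $\Sp(2g,\bZ/2)$ and the conjugation action on the abelian kernel factors through the quotient. Computing $\phi(T)(M)+M=ME+EM+EME$ for $M=\begin{pmatrix}\mra&\mrb\\\mrc&\mra^T\end{pmatrix}$ gives $(1,1)$-block $E_{11}\mrc$, $(1,2)$-block $\mra E_{11}+E_{11}\mra^T+E_{11}\mrc E_{11}$, and $(2,1)$-block $0$, whose class is $((\mrc_{11},0,\dots,0),0,\mrc_{11})$, so the image of $y\mapsto\phi(T)(y)+y$ is exactly the line you describe, missing the obstruction class. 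Your approach has the virtue of being explicit and verifiable directly from Definition \ref{def:fK and fY}, at the cost of a block-matrix computation; the paper's approach is shorter at the price of opacity. One cosmetic point: $\tilde T$ lives in $\Sp(2g,\bZ/4)$, so strictly speaking the lifts of $T$ to $\fH$ are of the form $(\text{image of }\tilde T\text{ in }\fH)\cdot y$, but this does not affect the argument.
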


  \begin{proof}
  \begin{enumerate}[\rm(i)]
  \item  The left diagram holds by definition of the groups in presence.

The upper left sequence of the right diagram is already in the left one. The upper right one is to be found in \cite[after Theorem 5.1]{BensonCampagnoloRanickiRovi:2017a}. Note that this shows that $\fsp(2g,\bbZ{2})\cong \Gamma(2g,2)/\Gamma(2g,4)$. This observation proves the exactness of the lower left sequence as well. The lower right one holds by the third isomorphism theorem for groups. The right triangle commutes: it is a composition of quotient maps. The left one commutes as well: it is a composition of inclusions, by definition of $\fY$ and using $\fsp(2g,\bbZ{2})\cong \Gamma(2g,2)/\Gamma(2g,4)$. The square commutes:
\[
\xymatrix
{
\Gamma(2g,2)/\Gamma(2g,4)\ar@{^{(}->}[r]\ar[d]_{/(\fK/\Gamma(2g, 4))}&\Sp(2g,\bZ)/(\Gamma(2g, 4)\ar[d]^{/(\fK/\Gamma(2g, 4))}\\
\Gamma(2g,2)/\fK\ar@{^{(}->}[r]&\Sp(2g,\bZ)/\fK.
}
\]

\item See \cite[Theorem 2.2(iii)]{BensonCampagnoloRanickiRovi:2017a}.

\item For the order of $\Sp(2g,\bZ/2)$, see \cite[2.3]{atlas}. For the order of $\fH$, combine the order of $\Sp(2g,\bZ/2)$ with the short exact sequence in the diagram on the right in Part (\ref{iii: commutative braids}). The order of $\fsp(2g, \bZ/2)$ is given in \cite[Theorem 5.1]{BensonCampagnoloRanickiRovi:2017a}. Using again the diagrams of \eqref{iii: commutative braids}, we obtain successively $|\fY|$ and $|\Sp(2g, \bZ/4)|$.
\end{enumerate}
  \end{proof}

\begin{prop}
\label{v: E, Z, H} The group $\fH$ has a double cover\index{H@$\tilde\fH$, double cover of $\fH$}
\[ 1 \longrightarrow \bbZ{2} \longrightarrow \tilde\fH \longrightarrow \fH \longrightarrow 1 \]
which is a non-split extension of $\Sp(2g,\bbZ{2})$ by an almost
extraspecial group 
$E$\index{E@$E$, almost extraspecial group}\index{almost extraspecial group} 
of order $2^{2g+2}$; namely $E$
is a central product of $\bbZ{4}$ with an
extraspecial group\index{extraspecial group} of order $2^{2g+1}$.
Furthermore, the conjugation action of $\tilde\fH$
on $E$ induces an isomorphism between $\tilde\fH/Z(\tilde\fH) \cong \fH/Z(\fH)$
and the index two subgroup of
$\Aut(E)$ centralising $Z(\tilde\fH)=Z(E)$. The groups fit into a
commutative braid of extensions\medskip
\[ \braid{\bbZ{2}}{\tilde\fH}{\Sp(2g,\bbZ{2})}{E}{\fH}{(\bbZ{2})^{2g+1}} \]
\end{prop}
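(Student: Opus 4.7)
The strategy is to construct $\tilde\fH$ from the nontrivial class in $H^2(\fH;\bZ/2)\cong\bZ/2$ displayed in the introductory diagram (established in the appendix), then read off the structural claims by restricting that extension to the normal subgroup $(\bZ/2)^{2g+1}=\Gamma(2g,2)/\fK$ of $\fH$. First I would fix the generator of $H^2(\fH;\bZ/2)$, which by construction is the mod~$2$ reduction of Meyer's class after division by $4$, and let $1\to\bZ/2\to\tilde\fH\to\fH\to 1$ be the corresponding central extension. The preimage $E$ of $(\bZ/2)^{2g+1}$ under $\tilde\fH\to\fH$ is then a $2$-group of order $2\cdot 2^{2g+1}=2^{2g+2}$, sitting in $1\to\bZ/2\to E\to(\bZ/2)^{2g+1}\to 1$, and its quotient $\tilde\fH/E\cong\fH/(\bZ/2)^{2g+1}\cong\Sp(2g,\bZ/2)$ is precisely the extension appearing in the bottom edge of the braid.

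Next I would identify $E$ as almost extraspecial by computing its center and commutator map. The commutator pairing descends to a bilinear form $(\bZ/2)^{2g+1}\times(\bZ/2)^{2g+1}\to\bZ/2$ whose radical is the center $Z(E)$ (modulo the central $\bZ/2$). Using the explicit description of $\fK$ in Definition \ref{def:fK and fY} and $\Gamma(2g,2)/\fK\cong(\bZ/2)^{2g+1}$ from Lemma \ref{i: K, H}, the commutator pairing can be read from multiplying representative matrices $I+2M$, $I+2M'$ in $\Sp(2g,\bZ)$ and reducing mod~$\fK$ together with the chosen cocycle. The key calculation is that this pairing has a $1$-dimensional radical corresponding to the $\Diag$-condition, and that a generator of the radical squares to the nontrivial central element. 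This simultaneously shows $|Z(E)|=4$ with $Z(E)\cong\bZ/4$ (so $E$ is a central product $\bZ/4\circ E_0$ with $E_0$ extraspecial of order $2^{2g+1}$), and that the induced form on $E/Z(E)\cong(\bZ/2)^{2g}$ is the standard nondegenerate symplectic form. Non-splitting of $1\to\bZ/2\to\tilde\fH\to\fH\to 1$ is automatic from the nontriviality of the cohomology class, and non-splitting of $1\to E\to\tilde\fH\to\Sp(2g,\bZ/2)\to 1$ follows by pulling back to $\Sp(2g,\bZ/2)$ and using that the lifted extension of $\Sp(2g,\bZ/2)$ by $\bZ/2$ inherits from Meyer's class is known to be non-split.

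For the automorphism statement I would invoke the standard fact (going back to Griess) that for an almost extraspecial $2$-group $E$ of order $2^{2g+2}$ with center $\bZ/4$, the automorphisms of $E$ fixing $Z(E)$ pointwise form an index-$2$ subgroup of $\Aut(E)$ and fit into a canonical extension $1\to E/Z(E)\to\Aut(E)_{Z}\to\Sp(2g,\bZ/2)\to 1$. Because $\tilde\fH$ acts on $E$ by conjugation and centralizes $Z(E)$, this gives a homomorphism $\tilde\fH/Z(\tilde\fH)\to\Aut(E)_{Z}$. Since $Z(\tilde\fH)=Z(E)$ (which I would check by showing no further elements of $\fH$ act trivially on $E$), and since both sides have the same order, it is an isomorphism. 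Finally, the braid assembles from the four extensions: $\bZ/2\to\tilde\fH\to\fH$, $E\to\tilde\fH\to\Sp(2g,\bZ/2)$, $(\bZ/2)^{2g+1}\to\fH\to\Sp(2g,\bZ/2)$, and $\bZ/2\to E\to(\bZ/2)^{2g+1}$; all the compatibilities are forced because $E$ was defined as the preimage of $(\bZ/2)^{2g+1}$.

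The main obstacle is the explicit determination of the commutator and squaring on $E$, i.e.\ verifying that the radical of the induced pairing is exactly $\bZ/2$ and that a radical generator has order $4$ rather than $2$ in $\tilde\fH$. This is where the precise choice of the mod~$2$ Meyer class must be used, and it is what distinguishes the almost extraspecial group $\bZ/4\circ E_0$ from the mere extraspecial $\bZ/2\times E_0$. Everything else—orders, non-splitting, the automorphism description, and assembly of the braid—reduces to formal manipulation of the two central extensions once this structural fact is in hand.
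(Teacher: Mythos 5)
Your outline gets the architecture right (define $\tilde\fH$ from a class in $H^2(\fH;\bZ/2)$, take $E$ to be the preimage of $(\bZ/2)^{2g+1}$, analyse commutator and squaring on $E$, then invoke Griess for automorphisms), but the crucial step is left unproved and the paper takes a materially different route.

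The genuine gap is exactly the one you flag yourself as the "main obstacle": showing that $Z(E)\cong\bZ/4$ rather than $(\bZ/2)^2$, i.e.\ that a generator of the radical of the commutator pairing squares to the nontrivial central element. This is what distinguishes the almost extraspecial central product $\bZ/4\circ E_0$ from $\bZ/2\times E_0$ with $E_0$ extraspecial, and it is not a formality: it depends on the precise value of the chosen $2$-cocycle on $\Gamma(2g,2)/\fK$, for which you have no explicit formula (the class on $\fH$ is characterised abstractly as the one inflating to the reduction of Meyer's class, not by a cocycle you can multiply matrices against). You sketch how one \emph{would} do this computation, but you never do it, so the proposal does not establish the statement. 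Two subsidiary gaps: (1) for $g=2,3$ one has $H^2(\fH;\bZ/2)\cong(\bZ/2)^3$ resp.\ $(\bZ/2)^2$, so "fix the generator" underspecifies the extension, and the paper explicitly notes that for $g=2$ there are several isomorphism types fitting the braid; (2) your non-splitting argument for $1\to E\to\tilde\fH\to\Sp(2g,\bZ/2)\to 1$ ("pulling back ... the lifted extension of $\Sp(2g,\bZ/2)$ by $\bZ/2$") is garbled — the clean argument is that a section of $\tilde\fH\to\Sp(2g,\bZ/2)$ would compose with $\tilde\fH\to\fH$ to split $\fH\to\Sp(2g,\bZ/2)$, contradicting Lemma~\ref{braid-lemma}(\ref{iv: orders}).

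For contrast, the paper does not perform the cocycle computation at all: it cites the earlier work \cite{BensonCampagnoloRanickiRovi:2017a} (Theorem 6.12/Corollary 6.14 and Section 3) for the existence and identification of $\tilde\fH$, invokes Griess's Theorem 5(b) for the almost extraspecial structure, the $\Aut(E)$ description, and $Z(\tilde\fH)=Z(E)$ (with a noted caveat that Griess's printed statement of that last point is wrong but his proof gives it), pins down $|Z(\fH)|=2$ by translating the conjugation action through $\Sp(2g,\bZ/2)\cong\Orth(2g+1,\bZ/2)$ and computing the radical of the associated form, and pulls the braid from \cite[Theorem 2.2(ix)]{BensonCampagnoloRanickiRovi:2017a}. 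So a proof along your lines would be a genuine alternative and more self-contained, but as written it does not close the essential computation, whereas the paper's proof offloads it to existing references.
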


\begin{proof}
The existence of the non-trivial double cover of $\fH$ is implied by \cite[Theorem 6.12, Corollary 6.14]{BensonCampagnoloRanickiRovi:2017a} for $g\geq 3$ and by the cohomology group tables in \cite{BensonCampagnoloRanickiRovi:2017a} for $g=2$.

By definition, an almost extraspecial group of order $2^{2g+2}$ is a central product of $\bZ/4$ with an extraspecial group of order $2^{2g+1}$: take the direct product of $\bZ/4$ with an extraspecial group of order $2^{2g+1}$ and quotient out the diagonally embedded central subgroup of order two. The center of the resulting almost extraspecial group is a copy of $\bZ/4$. 

Griess \cite[Theorem 5 (b)]{Griess:1973a} (see also \cite[Section 3]{BensonCampagnoloRanickiRovi:2017a}) shows that the group $\tilde\fH$ is an extension of $\Sp(2g, \bZ/2)$ by an almost extraspecial group of order $2^{2g+2}$, that we denote by $E$, and that $\tilde\fH/Z(\tilde\fH)$ is isomorphic to the subgroup of index $2$ in $\Aut(E)$ that centralizes the center of $E$. Moreover $Z(\tilde\fH)=Z(E)$ (caveat: the statement in Griess's theorem that says $Z(2^{2g+1})=Z(\tilde\fH)$ is wrong, but his proof shows the correct assertion).

The isomorphism $\tilde\fH/Z(\tilde\fH)\stackrel{\cong}{\rightarrow}\fH/Z(\fH)$ follows from the fact that the preimage of $Z(\fH)$ is exactly $Z(\tilde\fH)$: indeed $|Z(\tilde\fH)|=4$, and $Z(\tilde\fH)$ maps to $Z(\fH)$ with kernel $\bZ/2$.

To see that $|Z(\fH)|$ is (at least) $2$, proceed as follows: the action by conjugation of $\Sp(2g, \bZ/2)$ on the kernel of the extension $(\bZ/2)^{2g+1}$ is trivial for exactly all elements of $\Sp(2g, \bZ/2)$ that lift to an element of $Z(\fH)$. Recall that the action is given through the isomorphism $\Sp(2g, \bZ/2)\cong \Orth(2g+1, \bZ/2)$ \cite[before Remark 5.3]{BensonCampagnoloRanickiRovi:2017a}. The elements that act trivially are therefore the ones lying in the radical of the bilinear form. This is one dimensional, isomorphic to $\bZ/2$.

The commutative braid of extensions is implied by \cite[Theorem 2.2 (ix)]{BensonCampagnoloRanickiRovi:2017a}. 

Commutativity of the diagram implies that $\tilde{\fH}$ is non-split as an extension of $\Sp(2g, \bbZ{2})$: the existence of a section of $\tilde\fH\rightarrow \Sp(2g, \bZ/2)$ would imply the existence of a section of $\fH\rightarrow \Sp(2g, \bZ/2)$, and this would contradict Lemma  \ref{braid-lemma}     (\ref{iv: orders}).

If $g=2$, then there is more than one isomorphism class of groups $\fH$, and thus $\tilde{\fH}$, fitting in this diagram. The particular group $\fH$ described here is one candidate. This can be checked using the representation of the double cover $\tilde{\fH}$ given in \cite{Benson:theta}.
\end{proof}

\begin{lemma}\
\label{ix: reduction mod 8} We have
\begin{align*}
H^2(\Sp(2g,\bZ);\bZ)&\cong
\begin{cases}
\bZ \oplus \bZ/2 & \mbox{if }g=2, \\
\bZ & \mbox{if }g\geqslant 3, \end{cases} \\
H^2(\Sp(2g,\bZ);\bZ/8)&\cong\begin{cases}
\bZ/8 \oplus \bZ/2 \oplus \bZ/2 & \mbox{if }g=2, \\
\bZ/8 \oplus \bZ/2 & \mbox{if }g=3, \\
\bZ/8 & \mbox{if }g\geqslant 4. \end{cases}
\end{align*}
The image of reduction modulo eight is
$\begin{cases}
\bZ/8 \oplus \bZ/2 & \mbox{if }g=2, \\
\bZ/8 & \mbox{if }g\geqslant 3.\end{cases}$
\end{lemma}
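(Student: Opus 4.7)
The plan is to deduce both cohomology groups from the Universal Coefficient Theorem. For any group $G$ we have a split short exact sequence
\begin{equation*}
0 \longrightarrow H^2(G;\bZ)\otimes\bZ/8 \longrightarrow H^2(G;\bZ/8) \longrightarrow \mathsf{Tor}(H^3(G;\bZ),\bZ/8)\longrightarrow 0,
\end{equation*}
in which the left map is precisely reduction modulo $8$ on the $H^2(G;\bZ)$ summand. So the mod $8$ computation and the image of reduction modulo $8$ will both follow once we know $H^2(\Sp(2g,\bZ);\bZ)$ and the $2$-torsion in $H^3(\Sp(2g,\bZ);\bZ)$.

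First I would quote the integral $H^2$. The group $\Sp(2g,\bZ)$ is perfect for $g\geq 2$ and its Schur multiplier is known: classical work of Stein, Borel--Serre, and Deligne (which the paper already alludes to in Section \ref{se:lie}, and which is reproduced in Appendix \ref{app: co-homology computations}) gives $H^2(\Sp(2g,\bZ);\bZ)\cong\bZ$ for $g\geq 3$, generated by (a multiple of) the first Chern class $c_1$ pulled back from $H^2(B\Sp(2g,\bR);\bZ)\cong\bZ$ via Corollary \ref{co:H_b^2Sp}, while for $g=2$ there is an additional $\bZ/2$ summand coming from the failure of stability. Feeding these into the universal coefficient sequence already produces the first (respectively last) summand of the asserted description of $H^2(\Sp(2g,\bZ);\bZ/8)$: namely $\bZ/8\oplus\bZ/2$ when $g=2$ and $\bZ/8$ when $g\geq 3$, and this is precisely the image of reduction modulo $8$.

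Second, I would identify the Tor contribution by computing the $2$-torsion of $H^3(\Sp(2g,\bZ);\bZ)$. By the Borel--Serre stability range, for $g\geq 4$ the third cohomology is stable and contains no $2$-torsion, so $\mathsf{Tor}(H^3,\bZ/8)=0$ and we get $\bZ/8$ as claimed. For the unstable values $g=2,3$, I would invoke the detailed calculations of Appendix \ref{app: co-homology computations}, which should yield a single $\bZ/2$ summand in $H^3(\Sp(6,\bZ);\bZ)$ and an additional $\bZ/2$ summand in $H^3(\Sp(4,\bZ);\bZ)$ (one can also approach these via the Hochschild--Serre spectral sequence for $\Gamma(2g,2)\triangleleft \Sp(2g,\bZ)$, combined with the tables in the appendix for $\fH$, $\fY$, and $\Sp(2g,\bZ/2)$). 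Taking $\mathsf{Tor}(\cdot,\bZ/8)$ then contributes the extra $\bZ/2$ when $g=3$ and the extra $\bZ/2\oplus\bZ/2$ (one from each side) when $g=2$.

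The main obstacle is the unstable $H^3$ calculation in genera $2$ and $3$, where stability fails and one must work by hand or via the spectral sequences for the quotients $\fH$, $\Sp(2g,\bZ/4)$, and $\Sp(2g,\bZ/2)$ described in Lemma \ref{braid-lemma}. Once this is in place, the statement about the image of reduction is automatic because the universal coefficient sequence splits and the reduction map lands in the $H^2(G;\bZ)\otimes\bZ/8$ summand.
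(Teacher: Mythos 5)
The paper's own proof is a one-line citation to Lemma~\ref{Lemma:co-homology computations}~\eqref{H^2 Sp Z Z}, \eqref{H^2 Sp Z Z/8}, \eqref{H^2 Sp image r8}, which compute everything from $H_1(\Sp(2g,\bZ);\bZ)$ and $H_2(\Sp(2g,\bZ);\bZ)$ via the homology-to-cohomology universal coefficient sequence
$$0 \longrightarrow \Ext(H_1(G;\bZ),A) \longrightarrow H^2(G;A) \longrightarrow \Hom(H_2(G;\bZ),A) \longrightarrow 0,$$
using perfection ($H_1=0$) for $g\geq 3$ and $H_1\cong\bZ/2$ for $g=2$. You instead propose the Bockstein-type sequence
$$0 \longrightarrow H^2(G;\bZ)\otimes\bZ/8 \longrightarrow H^2(G;\bZ/8) \longrightarrow \Tor(H^3(G;\bZ),\bZ/8) \longrightarrow 0,$$
which is a genuinely different route. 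Both sequences are valid here (the second because $\Sp(2g,\bZ)$ admits a resolution finitely generated in each degree), and your identification of the image of reduction mod $8$ with $H^2(G;\bZ)\otimes\bZ/8$ is correct.

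The gap is the Tor term. You write that Appendix~\ref{app: co-homology computations} ``should yield'' the relevant values of $H^3(\Sp(2g,\bZ);\bZ)$, but the appendix never computes any third (co)homology group: it only computes $H_1$, $H_2$, and $H^2$ with various coefficients for the groups in question. So your cited source does not exist. The appeal to ``Borel--Serre stability'' for $g\geq 4$ is also not a proof: stability concerns the maps induced by $\Sp(2g,\bZ)\hookrightarrow\Sp(2g+2,\bZ)$ in a range of degrees, not torsion-freeness. The actual reason $H^3(\Sp(2g,\bZ);\bZ)$ has no $2$-torsion for $g\geq 4$ is the degree-$3$ homology UCT
$$0 \longrightarrow \Ext(H_2(G;\bZ),\bZ) \longrightarrow H^3(G;\bZ) \longrightarrow \Hom(H_3(G;\bZ),\bZ) \longrightarrow 0,$$
in which the Ext term vanishes because $H_2\cong\bZ$ is torsion-free, and the $\Hom$ term is torsion-free. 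Likewise, for $g=2,3$ this gives $2$-torsion exactly $\Ext(\bZ\oplus\bZ/2,\bZ)\cong\bZ/2$ (a \emph{single} $\bZ/2$ in each case, not $(\bZ/2)^2$ for $g=2$ as one reading of your phrase ``an additional $\bZ/2$'' suggests — with $(\bZ/2)^2$ the total would come out to $\bZ/8\oplus(\bZ/2)^3$, contradicting the statement). But once you invoke the homology UCT to control the torsion of $H^3$, you are in effect using exactly the data the paper uses; the detour through $H^3$ offers no saving and leaves the intermediate computation unproved as written. To close the gap, either supply the homology-UCT computation of $\Tor(H^3,\bZ/8)$ explicitly, or work directly with $H_1,H_2$ and coefficients $\bZ/8$ as in Lemma~\ref{Lemma:co-homology computations}~\eqref{H^2 Sp Z Z/8}.
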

\begin{proof}

See Lemma \ref{Lemma:co-homology computations} \eqref{H^2 Sp Z Z}, \eqref{H^2 Sp Z Z/8}, \eqref{H^2 Sp image r8}.

\end{proof}

\begin{theorem}\label{th:main}
\begin{enumerate} [\rm(i)]
\item \label{vi: inflations} The cohomology class in $H^2(\fH;\bbZ{2})$ determined by
the central extension $\tilde\fH$ inflates to a non-zero class 
$[\overline{\tau_g}] \in H^2(\Sp(2g,\bbZ{4});\bbZ{2})$,\index{tau@$\tau_4 \in H^2(\Sp(2g,\bbZ{4});\bbZ{2})$} 
to a non-zero class in 
$H^2(\Sp(2g,\bZ);\bbZ{2})$, and also to a non-zero class in $H^2(\mcg{g};\bbZ{2})$ for $g\geq 3$.
If $\fX$\index{X@$\fX$, double cover of $\Sp(2g,\bbZ{4})$} 
denotes the double cover of $\Sp(2g,\bbZ{4})$ corresponding to $[\overline{\tau_g}]$, we have:

\[ \braidtwo{\bbZ{2}}{\tilde\fH}{\mathfrak{X}}{\fH}{\fY}{\Sp(2g,\bbZ{4})} \]

\bigskip

\item\label{vii: H, Sp(2g, Z/2^n)} For $n\geq 2$, the inflation map induces an isomorphism
\[ H^2(\fH;\bbZ{2}) \cong H^2(\Sp(2g,\bbZ{2^n});\bbZ{2})
\cong \begin{cases}\bbZ{2} \oplus \bbZ{2}
  \oplus \bbZ{2} & \mbox{if }g=2, \\ \bbZ{2} \oplus \bbZ{2} & \mbox{if }g=3, \\
\bbZ{2} & \mbox{if }g\geqslant 4.\end{cases} \]
\item\label{viii: Z/2 Z/8} For $g \geqslant 3$ the inclusion of $\bZ/2$ in $\bZ/8$ sending $1$ to $4$ induces an isomorphism
\[ H^2(\Sp(2g,\bZ/4); \bZ/2) \longrightarrow H^2(\Sp(2g,\bZ/4);\bZ/8). \]
\item\label{x: inflation Z/4 Z} For $g \geqslant 3$ the inflation map
\[ H^2(\Sp(2g,\bZ/4); A) \longrightarrow H^2(\Sp(2g,\bZ); A) \]
is injective for any abelian group of coefficients $A$ with trivial action.
\item\label{xi: universal covers} Pulling back the universal cover of $\Sp(2g,\bR)$ to $\Sp(2g,\bZ)$
gives a group $\widetilde{\Sp(2g,\bZ)}$:
\[ \xymatrix@R=6mm{1 \ar[r] & \bZ \ar[r] \ar@{=}[d] &
\widetilde{\Sp(2g,\bZ)} \ar[r] \ar[d] & \Sp(2g,\bZ) \ar[r] \ar[d] & 1\\
1 \ar[r] & \bZ \ar[r] & \widetilde{\Sp(2g,\bR)} \ar[r] & \Sp(2g,\bR)
\ar[r] & 1.} \]
This is the universal central extension for $g\geqslant 4$.

For $g=3$, $\Sp(2g,\bbZ{2})$ has an exceptional double cover,
and the pullback of $\widetilde{\Sp(2g,\bZ)}$
and this
double cover over $\Sp(2g, \bZ/2)$ gives the universal central extension of $\Sp(2g,\bZ)$,
with kernel $\bZ \times \bbZ{2}$.

For $g=2$, $\Sp(2g,\bZ)$ is not perfect, so it does not have a
universal central extension. But the cohomology class
defined by the pullback is a generator for a $\bZ$ summand of
$H^2(\Sp(4,\bZ);\bZ)\cong \bZ\oplus \bZ/2$. 
\item\label{xii: generators versus Meyer} The Meyer cocycle
 is four times the generator of $H^2(\Sp(2g,\bZ);\bZ)\cong \bZ$ for
 $g\geqslant 3$, and four times the generator of the $\bZ$ summand for
 $g=2$.
\item\label{xiii: diagram chase} Following the non-zero cohomology class in $H^2(\fH;\bZ/2)$
corresponding to the central extension given in Proposition \ref{v: E, Z, H} 
through the maps described in \eqref{vi: inflations}--\eqref{x: inflation Z/4 Z} in this theorem and in Lemma \ref{ix: reduction mod 8}
\[ H^2(\fH;\bZ/2) \longrightarrow H^2(\Sp(2g,\bZ/4);\bZ/2) \longrightarrow H^2(\Sp(2g,\bZ/4);\bZ/8) \longrightarrow H^2(\Sp(2g,\bZ);\bZ/8) \]
gives the image of the Meyer cohomology class under reduction modulo eight
\[ H^2(\Sp(2g,\bZ);\bZ) \longrightarrow H^2(\Sp(2g,\bZ);\bZ/8). \]
Following it through
\[ H^2(\fH;\bZ/2) \longrightarrow H^2(\Sp(2g,\bZ/4);\bZ/2)\longrightarrow H^2(\Sp(2g,\bZ);\bZ/2) \]
gives the image of one quarter of the Meyer cohomology class under
reduction modulo two.
\end{enumerate}
\end{theorem}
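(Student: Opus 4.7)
The plan is to tackle the seven parts in dependency order: first establish the cohomological results (parts (\ref{vii: H, Sp(2g, Z/2^n)}), (\ref{viii: Z/2 Z/8}), (\ref{x: inflation Z/4 Z})) via Lyndon--Hochschild--Serre spectral sequences combined with the appendix $H^2$ computations, then construct the covers and identify the Meyer class in (\ref{vi: inflations}), (\ref{xi: universal covers}), (\ref{xii: generators versus Meyer}), and finally assemble everything in the diagram chase of (\ref{xiii: diagram chase}).

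For (\ref{vii: H, Sp(2g, Z/2^n)}), I would use the five-term exact sequence of the LHS spectral sequence for the extension $\fY \to \Sp(2g,\bZ/4) \to \fH$ of Lemma~\ref{braid-lemma} and for $\ker(\Sp(2g,\bZ/2^n) \to \fH) \to \Sp(2g,\bZ/2^n) \to \fH$; combined with the appendix computations of $H^2$ of the source groups, injectivity from the five-term sequence together with a rank comparison forces the inflation maps to be isomorphisms in the stated ranges. Part (\ref{x: inflation Z/4 Z}) follows directly from left-exactness of the five-term sequence for $\Gamma(2g,4) \to \Sp(2g,\bZ) \to \Sp(2g,\bZ/4)$. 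Part (\ref{viii: Z/2 Z/8}) drops out of the Bockstein long exact sequence associated to $0\to \bZ/2 \xrightarrow{\cdot 4} \bZ/8 \to \bZ/4 \to 0$ once one imports the appendix vanishing of $H^{1}$ and $H^{2}$ of $\Sp(2g,\bZ/4)$ with coefficients $\bZ/4$ in the relevant range. Part (\ref{vi: inflations}) is then immediate: define $\fX$ as the pullback of $\tilde\fH$ along $\Sp(2g,\bZ/4)\to\fH$, and similarly pull back to $\Sp(2g,\bZ)$ and $\mcg g$; nontriviality of each inflated class is precisely the injectivity statements just established, and the braid diagram is the universal property of pullbacks applied to Lemma~\ref{braid-lemma}(\ref{iii: commutative braids}).

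For (\ref{xi: universal covers}), Corollary~\ref{co:pi1Sp} presents $\widetilde{\Sp(2g,\bZ)}$ as a central $\bZ$-extension of $\Sp(2g,\bZ)$; for $g \geq 4$ the appendix value $H_2(\Sp(2g,\bZ);\bZ) = \bZ$ together with perfectness makes this extension universal, while for $g=3$ the exceptional Schur multiplier of $\Sp(6,\bZ/2)$ contributes an additional $\bZ/2$ and for $g=2$ non-perfectness forces the weaker statement about the $\bZ$ summand. Part (\ref{xii: generators versus Meyer}) then combines Theorem~\ref{th:tautau'} (the Meyer class equals $4c_1$ in the Borel cohomology of $\Sp(2g,\bR)$) with the identification from (\ref{xi: universal covers}) of the restriction of $c_1$ to $\Sp(2g,\bZ)$ as the generator. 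Finally (\ref{xiii: diagram chase}) is a diagram chase through the square in the introduction: $[\tau_g]$ is $4$ times a generator, so its reduction modulo $8$ is $4$ times its reduction modulo $2$, and the latter is the inflation of the class of $\tilde\fH$ by the injectivity isomorphisms of (\ref{vii: H, Sp(2g, Z/2^n)})--(\ref{x: inflation Z/4 Z}). The main obstacle will be bookkeeping the low-genus $(g=2,3)$ anomalies, where the cohomology of $\fH$, $\Sp(2g,\bZ/4)$ and $\Sp(2g,\bZ)$ acquires extra $\bZ/2$ summands from the exceptional Schur multipliers and non-perfectness; one must verify throughout that the distinguished class lies in the stable summand and is not confused with an exceptional class.
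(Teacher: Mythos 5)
Your overall architecture matches the paper's: five-term exact sequences plus universal coefficients for parts (ii) and (iv), a Bockstein argument for (iii), pullbacks and Corollary~\ref{co:pi1Sp} for (v), Theorem~\ref{th:tautau'} for (vi), and a final diagram chase for (vii). But there are two concrete problems in the cohomological core.

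First, in parts (ii) and (iv) you invoke ``injectivity from the five-term sequence,'' but injectivity of inflation is not automatic: the five-term cohomology sequence
\[ 0 \to H^1(Q;A) \to H^1(G;A) \to H^1(N;A)^Q \to H^2(Q;A) \to H^2(G;A) \]
only gives injectivity of $H^2(Q;A)\to H^2(G;A)$ when $H^1(N;A)^Q$ dies into the kernel, i.e.\ when the transgression image is trivial. The paper's proof supplies exactly this missing ingredient, citing the earlier results $H_0(\Sp(2g,\bbZ{2^n});H_1(\Gamma(2g,2^n)))=0$ and $H_0(\fH;H_1(\fY))=0$ (homologically), and then upgrading to cohomology via UCT; for part (iv) the analogous input is \cite[Corollary 6.5]{BensonCampagnoloRanickiRovi:2017a}, which says the relevant coinvariants vanish. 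Without those vanishing statements the ``rank comparison'' argument cannot start, since you do not even know the inflation is injective. The paper also proves the homological surjectivity by factoring through $\Sp(2g,\bZ)$ rather than passing directly to $\ker(\Sp(2g,\bbZ{2^n})\to\fH)$, because the needed coinvariant vanishing is established for $\Gamma(2g,2^n)$ specifically.

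Second, the mechanism you propose for part (iii) is wrong: $H^2(\Sp(2g,\bbZ{4});\bbZ{4})$ does \emph{not} vanish. For $g\geq 4$, $\Sp(2g,\bbZ{4})$ is perfect with $H_2(\Sp(2g,\bbZ{4});\bZ)\cong\bZ/2$, so by UCT $H^2(\Sp(2g,\bbZ{4});\bbZ{4})\cong\Hom(\bZ/2,\bZ/4)\cong\bZ/2$. What actually makes the map $i^*\colon H^2(\Sp(2g,\bbZ{4});\bbZ{2})\to H^2(\Sp(2g,\bbZ{4});\bbZ{8})$ an isomorphism is that $H^1(\Sp(2g,\bbZ{4});\bbZ{4})=0$ by perfectness (giving injectivity) together with the appendix computations showing that $H^2(\Sp(2g,\bbZ{4});\bbZ{2})$ and $H^2(\Sp(2g,\bbZ{4});\bbZ{8})$ are abstractly the same finite group, so an injection between them is forced to be a bijection. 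Your proposed vanishing input would give a shorter argument, but it simply is not true.

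Smaller remarks: the non-vanishing of the class inflated to $H^2(\mcg{g};\bbZ{2})$ needs the isomorphism $H^2(\Sp(2g,\bZ);\bZ/2)\cong H^2(\mcg{g};\bZ/2)$ for $g\geq 3$ (appendix Lemma~\ref{Lemma:co-homology computations}(\ref{H^2 map Gamma_g Sp Z/2})), which you do not cite; and for (v) at $g\geq 4$, knowing that the pullback extension is the universal one requires identifying its class as a generator of $H^2(\Sp(2g,\bZ);\bZ)\cong\bZ$, not merely as a nontrivial element — that identification is a separate fact (the Barge--Ghys input used in the proof of (vi)).
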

\begin{proof}
\begin{enumerate}[\rm (i)]
\item \cite[Corollary 6.14 and Corollary 6.5]{BensonCampagnoloRanickiRovi:2017a} imply the inflation of the class corresponding to $\tilde{\fH}$ to a non-zero class in $H^2(\Sp(2g, \bbZ{4}); \bbZ{2})$ and in $H^2(\Sp(2g, \bZ); \bbZ{2})$ for $g\geq 3$.
For $g=2$, the inflation of the class of $\tilde{\fH}$ to non-zero classes in $H^2(\Sp(4, \bbZ{4}); \bbZ{2})$ and in $H^2(\Sp(4, \bZ); \bbZ{2})$ can be taken from the second statement of \eqref{xiii: diagram chase}.

The further inflation to a non-zero class in $H^2(\Gamma_g; \bZ/2)$ follows from the isomorphism $H^2(\Sp(2g, \bZ); \bZ/2)\cong H^2(\Gamma_g; \bZ/2)$ for $g\geq 3$ (see Lemma \ref{Lemma:co-homology computations} \eqref{H^2 map Gamma_g Sp Z/2}, which also shows that the statement does not hold for $g=2$).

The braid of extensions describes the groups in presence.

\item See \cite[Corollary 6.14]{BensonCampagnoloRanickiRovi:2017a} for the isomorphism when $g\geq 3$ and \cite[Theorem 6.12]{BensonCampagnoloRanickiRovi:2017a} for the value of $H^2(\fH; \bZ/2)$ for $g=3$. 
Let us recall the argument to obtain the case $g=2$.

The five-term homology exact sequence associated with 
$$1\longrightarrow \Gamma(2g, 2^n)\longrightarrow \Sp(2g,\bZ)\longrightarrow \Sp(2g,\bZ/2^n)\longrightarrow 1$$
is
\begin{multline*}
H_2(\Sp(2g,\bZ); \bZ)\longrightarrow H_2(\Sp(2g, \bZ/2^n); \bZ)\longrightarrow H_0(\Sp(2g, \bZ/2^n); H_1(\Gamma(2g, 2^n)))\\
\longrightarrow H_1(\Sp(2g,\bZ); \bZ)\longrightarrow H_1(\Sp(2g, \bZ/2^n); \bZ)\longrightarrow 0.
\end{multline*}
By \cite[Lemma 6.2]{BensonCampagnoloRanickiRovi:2017a}, we have $ H_0(\Sp(2g, \bZ/2^n); H_1(\Gamma(2g, 2^n)))=0$ for $g\geq 2, n\geq 1$. 
So for every $g\geq 2, n\geq 1$, the map $H_2(\Sp(2g,\bZ); \bZ)\rightarrow H_2(\Sp(2g, \bZ/2^n); \bZ)$ is surjective.

The five-term homology exact sequence associated with 
$$1\longrightarrow\fY\longrightarrow\Sp(2g,\bZ/4)\longrightarrow\fH\longrightarrow1$$
is
\begin{multline*}
H_2(\Sp(2g,\bZ/4); \bZ)\longrightarrow H_2(\fH; \bZ)\longrightarrow H_0(\fH; H_1(\fY))\\
\longrightarrow H_1(\Sp(2g,\bZ/4); \bZ)\longrightarrow H_1(\fH; \bZ)\longrightarrow 0.
\end{multline*}
By \cite[Lemma 5.5 (i) and proof of Proposition 6.6]{BensonCampagnoloRanickiRovi:2017a} we have $ H_0(\fH; H_1(\fY))=0$ for $g\geq 1$. The map $H_2(\Sp(2g,\bZ/4); \bZ)\rightarrow H_2(\fH; \bZ)$ is thus surjective for $g\geq 1$. Together with the above this implies that $H_2(\Sp(2g,\bZ/2^n); \bZ)\rightarrow H_2(\fH; \bZ)$ is surjective for all $g, n\geq2$.
Consider the sequence of induced maps
$$\mathsf{Hom}(H_2(\Sp(2g,\bZ); \bZ), \bZ/2)\longleftarrow \mathsf{Hom}(H_2(\Sp(2g, \bZ/2^n); \bZ), \bZ/2)\longleftarrow \mathsf{Hom}(H_2(\fH; \bZ), \bZ/2).$$
By the above both are injective for $g, n\geq 2$. 
Insert the values of $H_2(\Sp(2g,\bZ); \bZ)$ (Lemma \ref{Lemma:co-homology computations} \eqref{H_2 Sp Z Z}) and $H_2(\fH; \bZ)$ \cite[Tables]{BensonCampagnoloRanickiRovi:2017a}. We see that both maps must be isomorphisms for $g, n\geq 2$. By the universal coefficient theorem this finishes the proof for $g\geq 3$, as each of $\Sp(2g,\bZ), \Sp(2g, \bZ/2^n)$, and $\fH$ are perfect.

For $g=2$, we need to also consider the $\mathsf{Ext}$-terms coming from $H_1(\Sp(2g,\bZ);\bZ)$, $H_1(\Sp(2g, \bZ/2^n);\bZ)$, and $H_1(\fH; \bZ)$. By the five-term exact sequences above, the maps $H_1(\Sp(2g,\bZ); \bZ)\rightarrow H_1(\Sp(2g, \bZ/2^n)$  for every $n\geq 1$ and $H_1(\Sp(2g,\bZ/4); \bZ)\rightarrow H_1(\fH; \bZ)$ are surjective. This implies that $$H_1(\Sp(2g, \bZ/2^n)\longrightarrow H_1(\fH; \bZ)$$ is surjective for all $n\geq 2$ as well. Insert the values of $H_1(\Sp(2g,\bZ); \bZ)$ (Lemma \ref{Lemma:co-homology computations} \eqref{H_1 Sp}) and $H_1(\fH; \bZ)$ \cite[Tables]{BensonCampagnoloRanickiRovi:2017a}. The composition 
$$\bZ/2\cong H_1(\Sp(2g,\bZ);\bZ)\longrightarrow H_1(\Sp(2g, \bZ/2^n);\bZ)\longrightarrow H_1(\fH; \bZ)\cong\bZ/2$$
is surjective, thus also injective. Then each of the two maps is an isomorphism, and thus by functoriality we obtain isomorphisms also for 
$$\mathsf{Ext}^1_{\bZ}(H_1(\fH; \bZ), \bZ/2)\longrightarrow \mathsf{Ext}^1_{\bZ}(H_1(\Sp(2g, \bZ/2^n);\bZ), \bZ/2)\longrightarrow \mathsf{Ext}^1_{\bZ}(H_1(\Sp(2g,\bZ);\bZ), \bZ/2)$$
for every $n\geq 2$. This together with the preceding results on the $\mathsf{Hom}$-term finishes the proof for $g=2$.
\item (See also \cite[Chapter 6]{CampagnoloThesis}.) We consider the following short exact sequence:
$$
\xymatrix
{
0\ar[r]&\mathbb{Z}/2\ar[r]^{i} & \mathbb{Z}/8\ar[r]^{p} &\mathbb{Z}/4\ar[r]&0,
}
$$ 
where $i$ is the map sending $1$ to $4$.
The induced long exact sequence in cohomology for $\Sp(2g, \mathbb{Z}/4)$ is:
\vspace{-3pt}
$$\begin{tikzpicture}[descr/.style={fill=white,inner sep=1pt}]
        \matrix (m) [
            matrix of math nodes,
            row sep=1.5em,
            column sep=2.5em,
            text height=1.5ex, text depth=0.25ex
        ]
       {  & ... & H^1(\Sp(2g, \mathbb{Z}/4); \mathbb{Z}/4) \\
             H^2(\Sp(2g, \mathbb{Z}/4); \mathbb{Z}/2) & H^2(\Sp(2g, \mathbb{Z}/4);\mathbb{Z}/8) & H^2(\Sp(2g, \mathbb{Z}/4);\mathbb{Z}/4) \\
            H^3(\Sp(2g, \mathbb{Z}/4);\mathbb{Z}/2) &... & \\
            &     \mbox{}    &                 &      \mbox{}    \\
        };
 \path[overlay,->, font=\scriptsize,>=latex]
        (m-1-2) edge node[descr,yshift=0.3ex] {$p^*$}(m-1-3)
     
            (m-1-3) edge[out=355,in=175] node[descr,yshift=0.3ex] {$\partial^1$} (m-2-1)
        (m-2-1) edge node[descr,yshift=0.3ex] {$i^*$}(m-2-2)
        (m-2-2) edge node[descr,yshift=0.3ex] {$p^*$} (m-2-3)
        (m-2-3) edge[out=355,in=175] node[descr,yshift=0.3ex] {$\partial^2$} (m-3-1)
        (m-3-1) edge node[descr,yshift=0.3ex] {$i^*$} (m-3-2);
\end{tikzpicture}$$
Let $g\geq 3$. Then $H^1(\Sp(2g, \mathbb{Z}/4); \mathbb{Z}/4)=0$ by perfection of the group. Using the cohomology computations of Part (\ref{vii: H, Sp(2g, Z/2^n)}) and Lemma \ref{Lemma:co-homology computations} \eqref{H^2 Sp Z/4 Z/8}, we thus have an isomorphism 
\[i^* \colon H^2(\Sp(2g, \mathbb{Z}/4); \mathbb{Z}/2)\stackrel{\cong}{\longrightarrow} H^2(\Sp(2g, \mathbb{Z}/4);\mathbb{Z}/8).\]
\medskip

\item By \cite[Corollary 6.5]{BensonCampagnoloRanickiRovi:2017a}.
\medskip

\item See \cite[Section 1] {BensonCampagnoloRanickiRovi:2017a} for $g\geq 3$.
For $g=2$, consider the following commutative diagram:
$$
\xymatrix
{
 & \bZ\ar[r]& \widetilde{\Sp(8, \bZ)}\ar[r]& \Sp(8, \bZ)\ar[dr] & \\
1\ar[ur]\ar[r] & \bZ\ar[r]\ar[u]_=& \widetilde{\Sp(4, \bZ)}\ar[r]\ar@{^{(}->}[u]& \Sp(4, \bZ)\ar@{^{(}->}[u]\ar[r] &1
}
$$
The inclusion of the symplectic group $\Sp(4, \bZ)\subset\Sp(8, \bZ)$ induces a morphism in cohomology $$
\xymatrix
{
H^2(\Sp(8, \bZ); \bZ)\cong\bZ\ar[r]& H^2(\Sp(4, \bZ); \bZ)\cong\bZ\oplus\bZ/2
}
$$ that maps $[\tau_4]=4\mapsto [\tau_2]=(4, 0)$ by Proposition \ref{pr:cocycles-restrict} (ii). For the generators we obtain $1\mapsto (1, 0)$ (see the proof of \eqref{xii: generators versus Meyer} below). This proves the statement for $g=2$.

\medskip

\item 

See \cite[Satz 2]{Meyer:1973a} for $g\geq 3$, as the map $p^*\colon H^2(\Sp(2g, \bZ); \bZ)\rightarrow H^2(\Gamma_g; \bZ)$ is an isomorphism (Lemma \ref{Lemma:co-homology computations} \eqref{H^2 map Gamma_g Sp}).

For $g=2$, consider the commutative diagram
$$
\xymatrix
{
\bZ\cong H^2_B(\Sp(6, \bR); \bZ)\ar[r]^{\cong}\ar[d]^{\cong}_r&H^2(\Sp(6, \bZ); \bZ)\cong\bZ\ar[d]^{r_\bZ}\\
\bZ\cong H^2_B(\Sp(4, \bR); \bZ)\ar@{^{(}->}[r]&H^2(\Sp(4, \bZ); \bZ)\cong\bZ\oplus\bZ/2.
}
$$
The map $r$ is an isomorphism by Corollary \ref{co:H_b^2Sp}, while the horizontal isomorphism is given by \cite[Proposition 3.1, p. 258]{barge-ghys}, as the groups in presence are cyclic (see also the proof of Lemma \ref{Lemma:co-homology computations} \eqref{H_2 map Gamma_g Sp}). The injection at the bottom comes from the same argument: the first Chern class $c_1$ is sent to $\frac{1}{4}[\tau_3]$ in the top line, and to either $(\frac{1}{4}[\tau_2], 0)$ or $(\frac{1}{4}[\tau_2], 1)\in\bZ\oplus\bZ/2$ in the bottom line. But Proposition \ref{pr:cocycles-restrict} (ii) says that the map $r$ sends $\tau_3$ to $\tau_2$ even as cocycles; a fortiori this is true for $r_\bZ$ on the restrictions of $\tau_3$ and $\tau_2$. So composing the arrrows shows that $c_1$ goes to $(\frac{1}{4}[\tau_2], 0)$, just as claimed.
\item For $g\geq 4$, the first statement is obtained as follows: denote by $[\eta]\in H^2(\fH; \bbZ{2})$ the generator corresponding to the extension $\tilde{\fH}$. The first map is an isomorphism by \eqref{vii: H, Sp(2g, Z/2^n)}, the second  by \eqref{viii: Z/2 Z/8}, the third is injective by \eqref{x: inflation Z/4 Z}, so that, using the computations in \eqref{vii: H, Sp(2g, Z/2^n)} and Lemma \ref{ix: reduction mod 8} the image of $[\eta]$ in $H^2(\Sp(2g, \bZ); \bbZ{8})$ is $4\in \bbZ{8}$. By Lemma \ref{ix: reduction mod 8} again, this is exactly the image of $[\tau_g]=4\in H^2(\Sp(2g, \bZ); \bZ)$ in $H^2(\Sp(2g, \bZ); \bbZ{8})$.

For $g=3$, consider the following commutative diagram:

{\footnotesize
$$
\xymatrix
{ 
\bZ/2\oplus\bZ/2\ar[d]^{\cong}& \bZ/2\oplus\bZ/2\ar[d]^{\cong}& \bZ/2\oplus\bZ/2\ar[d]^{\cong}& \bZ/8\oplus\bZ/2\ar[d]^{\cong}& \bZ\ar[d]^{\cong}\\
H^2(\fH; \bZ/2)\ar[r] & H^2(\Sp(6, \bZ/4); \bZ/2)\ar[r]& H^2(\Sp(6, \bZ/4); \bZ/8)\ar[r]& H^2(\Sp(6, \bZ); \bZ/8)&H^2(\Sp(6, \bZ); \bZ)\ar[l]\\
H^2(\fH; \bZ/2)\ar[r]_-{\cong}\ar[u]_f & H^2(\Sp(8, \bZ/4); \bZ/2)\ar[r]_{\cong}\ar[u]_h& H^2(\Sp(8, \bZ/4); \bZ/8)\ar@{^{(}->}[r]\ar[u]_k& H^2(\Sp(8, \bZ); \bZ/8)\ar[u]_\ell&H^2(\Sp(8, \bZ); \bZ)\ar[l]\ar[u]_{\cong}\\
\bZ/2\ar[u]_{\cong}& \bZ/2\ar[u]_{\cong}& \bZ/2\ar[u]_{\cong}& \bZ/8\ar[u]_{\cong}& \bZ\ar[u]_{\cong}
}
$$ }
The class $[\tau_4]\in H^2(\Sp(8, \bZ); \bZ)$ maps to $[\tau_3]\in H^2(\Sp(6, \bZ); \bZ)$ by Proposition \ref{pr:cocycles-restrict} (ii) and to $4\in H^2(\Sp(8, \bZ); \bZ/8)\cong\bZ/8$ by Lemma \ref{ix: reduction mod 8}. By  Lemma \ref{ix: reduction mod 8} again, $[\tau_3]$ maps to $(4, 0)\in\bZ/8\oplus\bZ/2$. The map $\ell$ is thus injective. Then $k$ has to be injective too, and so does $h$, and then $f$. Consequently, the non-zero class $f([\eta])$ is mapped under composition of the maps in the first line of the diagram to $$\ell(4)=(4, 0)\in H^2(\Sp(6, \bZ); \bZ/8).$$ As we remarked, this is the image of $[\tau_3]$ in $H^2(\Sp(6, \bZ); \bZ/8)$. That proves the first statement for $g=3$.

For $g=2$, we reason in the same way. We have the following commutative diagram:
{ \footnotesize
$$
\xymatrix
{
& \bZ/2\oplus\bZ/2\oplus\bZ/2\ar[d]^{\cong}& \bZ/2\oplus\bZ/2\oplus\bZ/2\ar[d]^{\cong}& \bZ/8\oplus\bZ/2\oplus\bZ/2\ar[d]^{\cong}& \bZ\oplus\bZ/2\ar[d]^{\cong}\\
H^2(\fH; \bZ/2)\ar[r] & H^2(\Sp(4, \bZ/4); \bZ/2)\ar[r]& H^2(\Sp(4, \bZ/4); \bZ/8)\ar[r]& H^2(\Sp(4, \bZ); \bZ/8)&H^2(\Sp(4, \bZ); \bZ)\ar[l]\\
H^2(\fH; \bZ/2)\ar[r]_-{\cong}\ar[u]_f & H^2(\Sp(8, \bZ/4); \bZ/2)\ar[r]_{\cong}\ar[u]_h& H^2(\Sp(8, \bZ/4); \bZ/8)\ar@{^{(}->}[r]\ar[u]_k& H^2(\Sp(8, \bZ); \bZ/8)\ar[u]_\ell&H^2(\Sp(8, \bZ); \bZ)\ar[l]\ar@{^{(}->}[u]\\
\bZ/2\ar[u]_{\cong}& \bZ/2\ar[u]_{\cong}& \bZ/2\ar[u]_{\cong}& \bZ/8\ar[u]_{\cong}& \bZ\ar[u]_{\cong}
}
$$ }
The class $[\tau_4]\in H^2(\Sp(8, \bZ); \bZ)$ maps to $[\tau_2]\in H^2(\Sp(4, \bZ); \bZ)$ by Proposition \ref{pr:cocycles-restrict} (ii) and to $4\in H^2(\Sp(8, \bZ); \bZ/8)\cong\bZ/8$ by Lemma \ref{ix: reduction mod 8}. By  Lemma \ref{ix: reduction mod 8} again, $[\tau_2]$ maps to $(4, 0, 0)\in\bZ/8\oplus\bZ/2\oplus\bZ/2$. The map $\ell$ is thus injective. Then $k$ has to be injective too, and so does $h$, and then $f$. Consequently, the non-zero class $f([\eta])$ is mapped under composition of the maps in the first line of the diagram to $\ell(4)=(4, 0, 0)\in H^2(\Sp(4, \bZ); \bZ/8)$. As we remarked, this is the image of $[\tau_2]$ in $H^2(\Sp(4, \bZ); \bZ/8)$. That proves the first statement for $g=2$.

The second statement follows from \cite[Theorem 2.2 (viii)]{BensonCampagnoloRanickiRovi:2017a} for $g\geq 4$.

For $g=3$, consider the following commutative diagram:
$$
\xymatrix
{ 
\bZ/2\oplus\bZ/2\ar[d]^{\cong}& \bZ/2\oplus\bZ/2\ar[d]^{\cong}& \bZ/2\oplus\bZ/2\ar[d]^{\cong}& \bZ\ar[d]^{\cong}\\
H^2(\fH; \bZ/2)\ar[r] & H^2(\Sp(6, \bZ/4); \bZ/2)\ar[r]& H^2(\Sp(6, \bZ); \bZ/2)&H^2(\Sp(6, \bZ); \bZ)\ar[l]\\
H^2(\fH; \bZ/2)\ar[r]_-{\cong}\ar[u]_f & H^2(\Sp(8, \bZ/4); \bZ/2)\ar[r]_{\cong}\ar[u]_h& H^2(\Sp(8, \bZ); \bZ/2)\ar[u]_\ell&H^2(\Sp(8, \bZ); \bZ)\ar[l]\ar[u]_{\cong}\\
\bZ/2\ar[u]_{\cong}& \bZ/2\ar[u]_{\cong}&  \bZ/2\ar[u]_{\cong}& \bZ\ar[u]_{\cong}
}
$$
The class $1\in H^2(\Sp(8, \bZ); \bZ)$ maps to $1\in H^2(\Sp(6, \bZ); \bZ)$ by Proposition \ref{pr:cocycles-restrict} (ii) and to $1\in H^2(\Sp(8, \bZ); \bZ/2)\cong\bZ/2$ by Lemma \ref{Lemma:co-homology computations} \eqref{H^2 Sp image r2}. By Lemma \ref{Lemma:co-homology computations} \eqref{H^2 Sp image r2} again, $1$ maps to $(1, 0)\in H^2(\Sp(6, \bZ); \bZ/2)\cong\bZ/2\oplus\bZ/2$. The map $\ell$ is thus injective. Then $h$ has to be injective too, and so does $f$. Consequently, the non-zero class $f([\eta])$ is mapped under composition of the maps in the first line of the diagram to $\ell(1)=(1, 0)\in H^2(\Sp(6, \bZ); \bZ/2)$. As we remarked, this is the image of $1\in H^2(\Sp(6, \bZ); \bZ)$ in $H^2(\Sp(6, \bZ); \bZ/2)$. That proves the second statement for $g=3$.

For $g=2$, we reason in the same way. We have the following commutative diagram:
$$
\xymatrix
{ 
& \bZ/2\oplus\bZ/2\oplus\bZ/2\ar[d]^{\cong}& \bZ/2\oplus\bZ/2\oplus\bZ/2\ar[d]^{\cong}& \bZ\oplus\bZ/2\ar[d]^{\cong}\\
H^2(\fH; \bZ/2)\ar[r] & H^2(\Sp(4, \bZ/4); \bZ/2)\ar[r]& H^2(\Sp(4, \bZ); \bZ/2)&H^2(\Sp(4, \bZ); \bZ)\ar[l]\\
H^2(\fH; \bZ/2)\ar[r]_-{\cong}\ar[u]_f & H^2(\Sp(8, \bZ/4); \bZ/2)\ar[r]_{\cong}\ar[u]_h& H^2(\Sp(8, \bZ); \bZ/2)\ar[u]_\ell&H^2(\Sp(8, \bZ); \bZ)\ar[l]\ar@{^{(}->}[u]\\
\bZ/2\ar[u]_{\cong}& \bZ/2\ar[u]_{\cong}&  \bZ/2\ar[u]_{\cong}& \bZ\ar[u]_{\cong}
}
$$
The class $1\in H^2(\Sp(8, \bZ); \bZ)$ maps to $(1, 0)\in H^2(\Sp(4, \bZ); \bZ)$ by Proposition \ref{pr:cocycles-restrict} (ii) (see also the proof of \eqref{xii: generators versus Meyer} above) and to $1\in H^2(\Sp(8, \bZ); \bZ/2)\cong\bZ/2$ by Lemma \ref{Lemma:co-homology computations} \eqref{H^2 Sp image r2}. By Lemma \ref{Lemma:co-homology computations} \eqref{H^2 Sp image r2} again, $(1, 0)$ maps to $(1, 0, 0)\in H^2(\Sp(4, \bZ); \bZ/2)\cong\bZ/2\oplus\bZ/2\oplus\bZ/2$. The map $\ell$ is thus injective. Then $h$ has to be injective too, and so does $f$. Consequently, the non-zero class $f([\eta])$ is mapped under composition of the maps in the first line of the diagram to $\ell(1)=(1, 0, 0)\in H^2(\Sp(4, \bZ); \bZ/2)$. As we remarked, this is the image of $(1, 0)\in H^2(\Sp(4, \bZ); \bZ)$ in $H^2(\Sp(4, \bZ); \bZ/2)$. That proves the second statement for $g=2$. \qedhere
\end{enumerate}
\end{proof}


The following diagram, valid for $g\geqslant 4$, may help clarify the theorem.
\[ \xymatrix@=8mm{
\bZ/2=H^2(\fH;\bZ/2) \ar[r]^\cong \ar[d]^\cong &
H^2(\fH;\bZ/8)=\bZ/2 \ar[d]^\cong \\
\bZ/2=H^2(\Sp(2g,\bZ/4);\bZ/2) \ar[r]^\cong \ar[d]^\cong &
H^2(\Sp(2g,\bZ/4);\bZ/8)=\bZ/2 \ar[d]^4 \\
\bZ/2=H^2(\Sp(2g,\bZ);\bZ/2) \ar[r]^4 &
H^2(\Sp(2g,\bZ);\bZ/8)=\bZ/8 \\
\bZ=H^2(\Sp(2g,\bZ);\bZ) \ar[r]^4 \ar@{->>}[u] &
H^2(\Sp(2g,\bZ);\bZ)=\bZ \ar@{->>}[u]} \]
This illustrates the connection between Meyer's cohomology class on the bottom
right and the central extension $\tilde \fH$ of $\fH$ regarded as a cohomology class on the top left.

\subsection{Signatures of surface bundles mod $N$ for $N>8$}\label{Signatures of surface bundles mod $N$ for $N>8$}

We just explained the existence of a cohomology class in the second cohomology group of the finite quotient $\fH$ of $\Sp(2g, \bZ)$ that computes the mod 2 reduction of signature/4 for a surface bundle over a surface. Here we explain in detail why this information on the signature is the best we can hope for if we restrict ourselves to cohomology classes on finite quotients of the symplectic group.

Precisely, we want to show the following:
\begin{theorem}\label{cas Z/m}
Let $g\geqslant 4$ and $K\leq \Sp(2g, \bZ)$ be a normal subgroup of finite index. 
If $c \in H^2(\Sp(2g,\bZ)/K;\bZ/N)$ is a cohomology class such that for the monodromy $\bar\chi\colon\pi_1(\Sigma_h)\rightarrow\Sp(2g,\bZ)$ of any surface bundle $\Sigma_g\rightarrow E\rightarrow\Sigma_h$ we have $$\sigma(E) = -\langle\bar\chi^*p^*(c),[\Sigma_h]\rangle \in\bZ/N,$$
then $N=2, 4,$ or $8$. If $N=2$ or $4$, the value $\langle\bar\chi^*p^*(c),[\Sigma_h]\rangle \in\bZ/N$ is always $0$. Here $p\colon\Sp(2g,\bZ)\rightarrow\Sp(2g,\bZ)/K$ denotes the quotient map.
\end{theorem}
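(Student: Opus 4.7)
The plan is to combine Meyer's formula, the cohomological computations of Theorem~\ref{th:main}, and Deligne's theorem on the non-residual finiteness of the universal central extension of $\Sp(2g,\bZ)$.

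First, I would invoke Meyer's formula \cite{Meyer:1973a}, which gives $\sigma(E) = -\langle\bar\chi^*[\tau_g], [\Sigma_h]\rangle$ in $\bZ$; combined with the hypothesis this yields
\[ \langle \bar\chi^*\bigl(p^*(c) - [\tau_g]_N\bigr), [\Sigma_h]\rangle = 0 \in \bZ/N \]
for every surface bundle $\Sigma_g \to E \to \Sigma_h$, where $[\tau_g]_N$ denotes the mod-$N$ reduction of $[\tau_g]$. Since $g \geqslant 4$, the group $\Sp(2g,\bZ)$ is perfect and $H_2(\Sp(2g,\bZ); \bZ) \cong \bZ$, so the universal coefficient theorem gives $H^2(\Sp(2g,\bZ); \bZ/N) \cong \bZ/N$, with classes detected by evaluation on a generator of $H_2$. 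Because $H_2(\Gamma_g; \bZ) \to H_2(\Sp(2g,\bZ); \bZ)$ is surjective for $g \geqslant 4$ and every class in $H_2$ of a group is represented by a map from a closed oriented surface, a generator of $H_2(\Sp(2g,\bZ); \bZ)$ arises from some surface-bundle monodromy. Therefore the displayed equation upgrades to the cohomological equality $p^*(c) = [\tau_g]_N$ in $H^2(\Sp(2g,\bZ); \bZ/N)$.

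Next, I would reinterpret this equality via central extensions. The class $c$ corresponds to a central extension $1 \to \bZ/N \to \tilde Q \to Q \to 1$ with $\tilde Q$ finite; its pullback along $p$ is an extension $1 \to \bZ/N \to E \to \Sp(2g,\bZ) \to 1$ with $E = \Sp(2g,\bZ) \times_Q \tilde Q$ and a surjection $E \twoheadrightarrow \tilde Q$ under which $\bZ/N \hookrightarrow \tilde Q$. By part~(\ref{xi: universal covers}) of Theorem~\ref{th:main}, for $g \geqslant 4$ the universal central extension $\tilde\Gamma$ of $\Sp(2g,\bZ)$ has kernel $\bZ$ and represents a generator of $H^2(\Sp(2g,\bZ); \bZ) \cong \bZ$; by part~(\ref{xii: generators versus Meyer}), the Meyer class equals $4$ under this identification. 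Hence $E$ is obtained from $\tilde\Gamma$ by pushout along the coefficient map $\bZ \xrightarrow{\cdot 4} \bZ/N$, and there is a canonical homomorphism $\tilde\Gamma \to E$ whose restriction to the central subgroups is multiplication by $4$ modulo $N$.

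Composing $\tilde\Gamma \to E \twoheadrightarrow \tilde Q$ then gives a homomorphism from $\tilde\Gamma$ to the finite group $\tilde Q$ whose restriction to $Z(\tilde\Gamma) = \bZ$ has image the cyclic subgroup $\langle 4 \bmod N\rangle \subseteq \bZ/N$, of order $N/\gcd(4, N)$. Deligne's theorem asserts that the image of $Z(\tilde\Gamma) = \bZ$ in any finite quotient of $\tilde\Gamma$ has order at most $2$. Thus $N/\gcd(4, N) \leqslant 2$, which forces $N \in \{1, 2, 4, 8\}$; the non-trivial cases are $N = 2, 4, 8$. For $N = 2$ or $4$, we have $4 \equiv 0 \pmod N$, so $[\tau_g]_N = 0$, giving $p^*(c) = 0$ and thus $\langle \bar\chi^* p^*(c), [\Sigma_h]\rangle = 0$ for every surface bundle, as claimed. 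The main obstacle I anticipate is the extension-theoretic bookkeeping of the second paragraph: carefully identifying the pullback extension $E$ with the pushout of $\tilde\Gamma$ along $\bZ \xrightarrow{\cdot 4} \bZ/N$ with the correct normalization on central subgroups, so that Deligne's bound applies cleanly and rules out $N > 8$.
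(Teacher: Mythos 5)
Your proof is correct, and it takes the same key ingredients as the paper's argument — the universal central extension $\widetilde{\Sp(2g,\bZ)}$ with kernel $\bZ$, the identification of the Meyer class with $4c_1$, and Deligne's non-residual-finiteness theorem — but packages them differently. The paper deduces Theorem \ref{cas Z/m} from a more general Theorem \ref{cas general} valid for an arbitrary commutative ring $A$; its core is a homological claim (Claim~1) that $p_*\colon H_2(\Sp(2g,\bZ);\bZ)\to H_2(Q;\bZ)$ kills $2\bZ$, proved by pulling back the universal central extension of the perfect finite group $Q$ (whose kernel $H_2(Q;\bZ)$ the paper must first verify is finite), invoking the universal property to produce a map out of $\widetilde{\Sp(2g,\bZ)}$, and then applying Deligne; the universal coefficient theorem then forces $p^*(c)=4_A\in T_2(A)$, and the integer-modular specialization is handled in a separate remark. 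You instead work directly with the finite central extension $\tilde Q$ of $Q$ by $\bZ/N$ classified by $c$, identify its pullback to $\Sp(2g,\bZ)$ with the pushout of $\widetilde{\Sp(2g,\bZ)}$ along $\bZ\xrightarrow{\cdot 4}\bZ/N$, and read off the constraint $N/\gcd(4,N)\leqslant 2$ from Deligne applied to the composite $\widetilde{\Sp(2g,\bZ)}\to E\to\tilde Q$. This avoids any discussion of $H_2(Q;\bZ)$ and its universal central extension, since $\tilde Q$ is obviously finite; it is more concrete, at the cost of being tied to cyclic coefficients (though, as you could observe, the same pushout argument adapts verbatim to general $A$). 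You also explicitly supply the passage from the evaluation hypothesis $\sigma(E)=-\langle\bar\chi^*p^*(c),[\Sigma_h]\rangle$ to the cohomological identity $p^*(c)=[\tau_g]_N$ — via Steenrod representability of $H_2$ by surfaces and the surjectivity of $H_2(\Gamma_g;\bZ)\to H_2(\Sp(2g,\bZ);\bZ)$ — a step the paper's proof takes for granted when it writes ``suppose $p^*(c)=[\tau_g]_A$''; this is a genuine gap-filling on your part. One small wrinkle shared by both arguments: the constraint as derived actually yields $N\in\{1,2,4,8\}$, with $N=1$ the degenerate case of trivial coefficients, which the theorem's statement tacitly omits.
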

This will be a direct consequence of a more general result (see also Remark \ref{Z/m}):
\begin{theorem}\label{cas general}
Let $g\geqslant 4$ and $K\leq \Sp(2g, \bZ)$ be a normal subgroup of finite index, and $A$ be a commutative ring. If $c \in H^2(\Sp(2g,\bZ)/K; A)$ is a cohomology class such that for the monodromy $\bar\chi\colon\pi_1(\Sigma_h)\rightarrow\Sp(2g,\bZ)$ of any surface bundle $\Sigma_g\rightarrow E\rightarrow\Sigma_h$ we have $$\sigma(E) = -\langle\bar\chi^*p^*(c),[\Sigma_h]\rangle \in A,$$
then $\langle\bar\chi^*p^*(c),[\Sigma_h]\rangle$ is the image of $4\in\bZ$ in $A$ and lies in the $2$-torsion of $A$. Here $p\colon\Sp(2g,\bZ)\rightarrow\Sp(2g,\bZ)/K$ denotes the quotient map.
\end{theorem}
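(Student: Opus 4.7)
The plan is to combine Meyer's formula for the signature via $[\tau_g]$ with Deligne's theorem on the non-residual-finiteness of the universal central extension of $\Sp(2g,\bZ)$.

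First, Meyer's theorem gives $\sigma(E)=-\langle\bar\chi^*[\tau_g],[\Sigma_h]\rangle\in\bZ$; applying the change of coefficients $\bZ\to A$ and subtracting the hypothesis yields
\[
\langle\bar\chi^*(p^*(c)-[\tau_g]_A),[\Sigma_h]\rangle=0\in A
\]
for every surface bundle monodromy $\bar\chi$, where $[\tau_g]_A$ denotes the image of $[\tau_g]$ under the coefficient map $H^2(\Sp(2g,\bZ);\bZ)\to H^2(\Sp(2g,\bZ);A)$. Since $g\geq 4$, $\Sp(2g,\bZ)$ is perfect with $H_2(\Sp(2g,\bZ);\bZ)\cong\bZ$, so universal coefficients identifies $H^2(\Sp(2g,\bZ);A)\cong A$, under which $[\tau_g]_A$ corresponds to $4_A:=4\cdot 1_A\in A$ by Theorem~\ref{th:main}\eqref{xii: generators versus Meyer}. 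The induced map $\bar\chi^*\colon A\to A$ is multiplication by a degree $n_{\bar\chi}\in\bZ$; since Meyer constructs surface bundles realizing $\sigma(E)=\pm 4$ for $g\geq 3$, the values $n_{\bar\chi}=\pm 1$ are attained, and picking such a $\bar\chi$ forces $p^*(c)$ to correspond to $4_A$ in $H^2(\Sp(2g,\bZ);A)\cong A$.

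The heart of the argument is to show that $4_A$ is $2$-torsion in $A$, i.e.\ $8_A=0$. Represent $c$ by a cocycle $Q\times Q\to A$; since $Q$ is finite, the image of this cocycle together with $4_A$ generates a finitely generated subgroup $A_0\subseteq A$, and $c$ is the image under $A_0\hookrightarrow A$ of a class $c_0\in H^2(Q;A_0)$ corresponding to a central extension $A_0\to\tilde Q_0\to Q$. Because $A_0$ is a finitely generated abelian group (hence residually finite) of finite index in $\tilde Q_0$, the group $\tilde Q_0$ is itself finitely generated and residually finite. By the universal property of the central extension $\widetilde{\Sp(2g,\bZ)}\to\Sp(2g,\bZ)$ (which exists for $g\geq 4$ with central kernel $\bZ$), there is a canonical homomorphism $\widetilde{\Sp(2g,\bZ)}\to\tilde Q_0$ whose restriction to the center sends the generator of $\bZ$ to $4_A\in A_0$.

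Deligne's theorem asserts that the subgroup $2\bZ$ of the center of $\widetilde{\Sp(2g,\bZ)}$ lies in the kernel of every homomorphism from $\widetilde{\Sp(2g,\bZ)}$ to a finite group. If $8_A\neq 0$ in $A_0$, residual finiteness of $\tilde Q_0$ produces a finite quotient $\tilde Q_0\to F$ in which $8_A\in A_0$ has nonzero image; the composition $\widetilde{\Sp(2g,\bZ)}\to\tilde Q_0\to F$ is then a homomorphism to a finite group sending $2\in\bZ$ to a nonzero element, contradicting Deligne. Hence $8_A=0$ in $A_0$, and therefore in $A$. It follows that for any surface bundle $\Sigma_g\to E\to\Sigma_h$ the value $\langle\bar\chi^*p^*(c),[\Sigma_h]\rangle=n_{\bar\chi}\cdot 4_A$ lies in the image of $4\bZ\subseteq\bZ$ in $A$ and is $2$-torsion. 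The main obstacle I anticipate is ensuring residual finiteness of $\tilde Q_0$, which motivates the reduction to the finitely generated subgroup $A_0$ generated by the cocycle values of $c$.
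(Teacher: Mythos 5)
Your proof is correct, but it takes a genuinely different route from the paper's. The paper works entirely at the level of second homology: it shows (Claim~1) that the induced map $p_*\colon H_2(\Sp(2g,\bZ);\bZ)\to H_2(Q;\bZ)$ annihilates $2\bZ$, by mapping the universal central extension $\widetilde{\Sp(2g,\bZ)}$ into the (finite) universal central extension $\widetilde Q$ of the finite perfect group $Q$ and applying Deligne; then (Claim~2) it pushes this through the naturality of the universal coefficient theorem to deduce that $\mathsf{Im}(p^*)\subseteq T_2(H^2(\Sp(2g,\bZ);A))$ for any coefficient group $A$. You instead work directly with a central extension $A_0\to\tilde Q_0\to Q$ realizing (a refinement of) $c$, where $A_0$ is the finitely generated subgroup of $A$ generated by the values of a representing cocycle. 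Since $\tilde Q_0$ is finitely generated and virtually abelian, it is residually finite, so if $8_{A_0}\neq 0$ you can separate it from the identity in a finite quotient $F$, and the composite $\widetilde{\Sp(2g,\bZ)}\to\tilde Q_0\to F$ then violates Deligne's theorem. The passage to the finitely generated $A_0$ is the essential device that makes your residual-finiteness argument go through and replaces the paper's observation that $H_2(Q;\bZ)$ is finite. Both proofs hinge on Deligne; the paper's is more homological, yours more extension-theoretic, and each is valid. You also make explicit the step $p^*(c)=[\tau_g]_A$ (via realizability of signature $\pm 4$ and the observation that $\langle\bar\chi^*(-),[\Sigma_h]\rangle$ becomes multiplication by $n_{\bar\chi}$), which the paper's proof simply takes as given from the hypothesis; this is a worthwhile clarification.

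One small imprecision: you invoke ``the universal property of $\widetilde{\Sp(2g,\bZ)}\to\Sp(2g,\bZ)$'' to produce a map $\widetilde{\Sp(2g,\bZ)}\to\tilde Q_0$, but $\tilde Q_0$ is a central extension of $Q$, not of $\Sp(2g,\bZ)$, so the universal property does not apply directly. You should first pull back $\tilde Q_0\to Q$ along $p$ to obtain a central extension $E_0\to\Sp(2g,\bZ)$ (exactly the group called $E$ in the paper), apply the universal property to get $\widetilde{\Sp(2g,\bZ)}\to E_0$, and then compose with the projection $E_0\to\tilde Q_0$. With this fix the restriction to centers is precisely the map $\bZ\to A_0$ classified by $p^*(c_0)$ under the universal coefficient isomorphism, which sends $1\mapsto 4_{A_0}$, and the rest of your argument stands.
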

Morally, this theorem says that any cohomology class on a finite quotient of the symplectic group yields at most a $2$-valued information on the signature of a surface bundle over a surface (as it lies in the $2$-torsion of the arbitrary coefficient ring $A$).

The above results rely on the non-residual finiteness of $\widetilde{\Sp(2g, \bZ)}$  proved by Deligne \cite{Deligne:1978a}. We recall the setup.

Let $\widetilde{\Sp(2g, \bZ)}$ denote the central extension obtained by pulling back the universal cover of $\Sp(2g, \mathbb{R})$ to $\Sp(2g, \bZ)$:
$$\xymatrix{
1\ar[r]&\bZ\ar[r]\ar@{=}[d]&\widetilde{\Sp(2g, \bZ)}\ar[r]_u\ar[d]&\Sp(2g, \bZ)\ar[r]\ar[d]&1\phantom{.}\\
1\ar[r]&\bZ\ar[r]&\widetilde{\Sp(2g, \mathbb{R})}\ar[r]&\Sp(2g, \mathbb{R})\ar[r]&1.
}$$
For $g\geqslant 4$, the group $\widetilde{\Sp(2g, \bZ)}$ is the universal central extension of the perfect group $\Sp(2g, \bZ)$.

Let $[\tau_g]\in H^2(\Sp(2g, \bZ); \bZ)$ denote the class of the Meyer signature cocycle. Recall that $[\tau_g]=4\in\bZ\cong H^2(\Sp(2g, \bZ); \bZ)$ for $g\geqslant 3$.
\begin{theorem}[Deligne \cite{Deligne:1978a}]\label{deligne}
For $g\geqslant 2$, the group $\widetilde{\Sp(2g, \bZ)}$ is not residually finite: every subgroup of finite index contains the subgroup $2\bZ$.
\end{theorem}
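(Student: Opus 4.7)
The plan is to combine two deep inputs: the congruence subgroup property (CSP) for $\Sp(2g,\bZ)$ with $g\geq 2$, and Matsumoto's description of the universal central extension via Hilbert symbols.

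First, I would use CSP (Bass--Milnor--Serre, Mennicke) to reduce to congruence subgroups. Given a finite-index $\Gamma \leq \widetilde{\Sp(2g,\bZ)}$, its image $u(\Gamma)\leq\Sp(2g,\bZ)$ is finite-index and hence contains some principal congruence subgroup $\Gamma(2g,N)$. It suffices to show that every finite-index subgroup of the pullback $\Gamma_N := u^{-1}(\Gamma(2g,N))$ contains $2\bZ$; equivalently, that the central extension class $[\Gamma_N]\in H^2(\Gamma(2g,N);\bZ)$ --- the restriction of the generator of $H^2_B(\Sp(2g,\bR);\bZ)\cong\bZ$ from Corollary \ref{co:H_b^2Sp} --- has non-trivial mod-$2$ reduction on every finite-index subgroup $\Lambda\leq\Gamma(2g,N)$.

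Second, I would identify this mod-$2$ reduction with the class of the metaplectic double cover $\Mp(2g,\bR)\to\Sp(2g,\bR)$ restricted to $\Gamma(2g,N)$. By Matsumoto's theorem, the universal central extension of a simply connected Chevalley group over a field is described by Steinberg symbols, and for the localisation at $2$ the mod-$2$ reduction is governed by the $2$-adic Hilbert symbol $(\cdot,\cdot)_2$ applied to suitable matrix entries. The task then reduces to showing that this Hilbert-symbol cocycle remains non-trivial after restriction to every finite-index subgroup $\Lambda$ of every $\Gamma(2g,N)$, not merely to further principal congruence subgroups (which is the easier statement).

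The hard part --- and the real content of Deligne's theorem --- is this last claim, which he establishes via an adelic argument. Working with the metaplectic extension of $\Sp(2g,\mathbb{A}_\bQ)$, Deligne shows that the $2$-adic local factor survives arbitrary finite-index restriction: $K_2(\bQ_2)$ contains a distinguished $\bZ/2$ detected by $(\cdot,\cdot)_2$ that is not killed by any local congruence condition on $\Sp(2g,\bZ_2)$, and strong approximation transfers this non-vanishing from the adelic side down to $\Lambda$. Combining the three steps, any finite-index subgroup of $\widetilde{\Sp(2g,\bZ)}$ must intersect the centre in $k\bZ$ with $k\mid 2$, hence contains $2\bZ$, proving non-residual finiteness.
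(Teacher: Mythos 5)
The paper does not prove this theorem: it is stated with the attribution ``[Deligne~\cite{Deligne:1978a}]'' and used as a black box in the proof of Theorem~\ref{cas general}. So there is no internal proof to compare against; what you have written is an attempt to reconstruct Deligne's external argument, and you candidly defer the ``hard part'' to his adelic computation, so even on its own terms it is a sketch rather than a proof.

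More seriously, the reduction you set up before the deferred step has a genuine gap. You claim that it suffices to show the mod-$2$ reduction of the extension class in $H^2(\Gamma(2g,N);\bZ)$ is non-trivial on every finite-index subgroup $\Lambda\leq\Gamma(2g,N)$. This fails on two counts. (a) The claim itself is false: by Theorem~\ref{th:main}~(i) and (xiii), the mod-$2$ reduction of the generator of $H^2(\Sp(2g,\bZ);\bZ)\cong\bZ$ --- that is, the class of the metaplectic double cover --- is inflated from a class on the finite group $\Sp(2g,\bZ/4)$, so it restricts to zero on $\Gamma(2g,4)$ and on every finite-index subgroup thereof. This vanishing is not a defect; it is precisely what allows the residual intersection to be $2\bZ$ rather than all of $\bZ$, since a finite-index subgroup of $\widetilde{\Sp(2g,\bZ)}$ meeting $\bZ$ in exactly $2\bZ$ exists over $\Gamma(2g,4)$. (b) Even if the mod-$2$ class never vanished, it would not give what you want: if $\Gamma$ is finite-index with $\Gamma\cap\bZ=k\bZ$, then the mod-$k$ reduction of the class splits over $u(\Gamma)$, so non-vanishing of the mod-$2$ reduction would only force $2\nmid k$, i.e.\ $k$ odd; that includes $k=3,5,\dots$ and is strictly weaker than the required $k\mid 2$. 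The statement one actually needs is that for every $n\geq 3$ the mod-$n$ class survives restriction to every finite-index subgroup, which is an intrinsically adelic assertion about the full image of $H_2(\Sp(2g,\bZ);\bZ)$ in the continuous $H_2$ of the congruence completion $\Sp(2g,\hat{\bZ})$, controlled by metaplectic $K_2$ at all finite places at once. It cannot be reduced to a local non-vanishing at $p=2$ the way your sketch attempts.
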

\begin{proof}[Proof of Theorem \ref{cas general}]
Let $Q=\Sp(2g,\bZ)/K$ be a finite quotient of $\Sp(2g, \bZ)$. Denote by $p\colon\Sp(2g,\bZ)\rightarrow Q$ the projection map. We have a natural isomorphism 
$$\begin{array}{ccc}
\widetilde{\Sp(2g, \bZ)}/\widetilde{K}&\longrightarrow &\Sp(2g, \bZ)/K\\
\widetilde{g}\widetilde{K}&\longmapsto &u(\tilde{g})K,
\end{array}$$
where $\widetilde{K}=u^{-1}(K)$ denotes the preimage of $K$ in $\widetilde{\Sp(2g, \bZ)}$. So $[\widetilde{\Sp(2g, \bZ)}:\widetilde{K}]<\infty$. By Theorem \ref{deligne}, the subgroup $\widetilde{K}$ then contains $2\bZ$.

Note that as $\widetilde{\Sp(2g, \bZ)}$ is the universal central extension of $\Sp(2g, \bZ)$ for $g\geqslant 4$, the central subgroup of the extension is isomorphic to the second homology group:
$$\bZ\cong H_2(\Sp(2g, \bZ); \bZ),\, \forall g\geqslant 4.$$

{\bfseries Claim 1:} The map induced by $p$ in homology annihilates the subgroup $2\bZ$:
$$2\bZ\leq \mathsf{ker}\left(p_*\colon H_2(\Sp(2g, \bZ); \bZ)\longrightarrow H_2(Q; \bZ)\right).$$

{\bfseries Proof of Claim 1:} As a quotient of the perfect group $\Sp(2g, \bZ)$, the group $Q$ is perfect. Hence it possesses a universal central extension $\widetilde{Q}$ satisfying:
$$\xymatrix{
1\ar[r]&H_2(Q; \bZ)\ar[r]&\widetilde{Q}\ar[r]_\pi&Q\ar[r]&1.
}$$
Denote by $E$ the pullback of $\widetilde{Q}$ and $\Sp(2g, \bZ)$ fitting in the following comutative diagram:
$$\xymatrix{
1\ar[r]&H_2(Q; \bZ)\times\{\mathrm{id}\}\ar[r]\ar[d]_{p_1}& E\ar[d]_{p_1}\ar[r]_-{p_2}& \Sp(2g, \bZ)\ar[r]\ar[d]_p&1\phantom{.}\\
1\ar[r]&H_2(Q; \bZ)\ar[r]&\widetilde{Q}\ar[r]_\pi&Q\ar[r]&1.
}$$
The maps $p_1$ and $p_2$ denote the projections to the first and the second factor respectively, and $$E=\left\{(\tilde{q}, g)\in\widetilde{Q}\times\Sp(2g, \bZ)\,\vline\, \pi(\tilde{q})=p(g)\right\}.$$
They are both surjective, as $\pi$ and $p$ are. Moreover one checks directly that $E$ is a central extension of $\Sp(2g, \bZ)$. Consequently by the properties of the universal central extension, there exists a unique map $\phi\colon \widetilde{\Sp(2g, \bZ)}\rightarrow E$ such that $p_2\circ\phi=u$. We now have the commutative diagram
$$\xymatrix{
1\ar[r]&H_2(\Sp(2g, \bZ); \bZ)\ar[r]\ar[d]_{\phi}& \widetilde{\Sp(2g, \bZ)}\ar[d]_{\phi}\ar[dr]^-{u}&&\\
1\ar[r]&H_2(Q; \bZ)\times\{\mathrm{id}\}\ar[r]\ar[d]_{p_1}& E\ar[d]_{p_1}\ar[r]_-{p_2}& \Sp(2g, \bZ)\ar[r]\ar[d]_p&1\phantom{.}\\
1\ar[r]&H_2(Q; \bZ)\ar[r]&\widetilde{Q}\ar[r]_\pi&Q\ar[r]&1.
}$$
By functoriality (and general knowledge on the second homology of perfect groups) the leftmost composition $p_1\circ\phi$ is precisely the induced map in homology $p_*\colon H_2(\Sp(2g, \bZ); \bZ)\rightarrow H_2(Q; \bZ)$.
Now we have to show that the kernel of this map contains $2 H_2(\Sp(2g, \bZ); \bZ)\cong 2\bZ$.

Note that 
$$\mathsf{ker}\left(p_*\colon H_2(\Sp(2g, \bZ); \bZ)\longrightarrow H_2(Q; \bZ)\right)=\mathsf{ker}\left(p_1\circ\phi\right)\cap  H_2(\Sp(2g, \bZ); \bZ).$$

To conclude, it is thus enough to show that $\mathsf{ker}\left(p_1\circ\phi\right)$ contains $2\bZ$. Thus by Theorem \ref{deligne} it is enough to show that $[\widetilde{\Sp(2g, \bZ)}:\mathsf{ker}\left(p_1\circ\phi\right)]<\infty$.

To this purpose, observe that, 
 as $Q$ is finite, the modules in its bar resolution are finitely generated, so that $H_2(Q; \bZ)$ is finitely generated. Also $H^2(Q; \bZ)$ is annihilated by $|Q|$ \cite[III, 10.2]{brown}, so it is torsion.  As $Q$ is perfect, this together with the universal coefficient theorem yields that $H_2(Q; \bZ)$ is torsion (see the diagram below with $A=\bZ$). Consequently, $H_2(Q;\bZ)$ is finite.

Therefore the exension $\widetilde{Q}$ of the finite groups $Q$ and $H_2(Q, \mathbb{Z})$ is finite as well. This directly implies that $[\widetilde{\Sp(2g, \bZ)}:\mathsf{ker}\left(p_1\circ\phi\right)]<\infty$ and proves Claim 1. 

As $\Sp(2g, \bZ)$ and $Q$ are perfect, the universal coefficient theorem for cohomology provides the following commutative diagram:
$$\xymatrix{
\mathsf{Hom}(H_2(Q; \bZ), A)\ar[rr]^-{\mathsf{Hom}(p_*)}\ar@{}[d]|*[@]{\cong}& & \mathsf{Hom}(H_2(\Sp(2g, \bZ); \bZ), A)\ar@{}[d]|*[@]{\cong}\\
H^2(Q; A)\ar[rr]^-{p^*}&& H^2(\Sp(2g, \bZ); A)
}$$
for any principal ideal domain $A$.

{\bfseries Claim 2:} $\mathsf{Im}(p^*)\subset T_2(H^2(\Sp(2g, \bZ); A))$, where $T_2$ denotes the $2$-torsion.

{\bfseries Proof of Claim 2:} As indicated by the commutative diagram, the map $p^*$ is defined as follows:
$$ H^2(Q; A)\cong \mathsf{Hom}(H_2(Q; \bZ), A)\ni\alpha\longmapsto \alpha\circ p_*\in \mathsf{Hom}(H_2(\Sp(2g, \bZ); \bZ), A)\cong H^2(\Sp(2g, \bZ); A).$$
Then let $g\geqslant 4$ and $n\in H_2(\Sp(2g, \bZ); \bZ)\cong\bZ$. If $2\mid n$, Claim $1$ implies $p_*(n)=0$. So $\mathsf{Hom}(p_*)(\alpha)$ is determined by $\mathsf{Hom}(p_*)(\alpha)(1)$, and we have
$$2\mathsf{Hom}(p_*)(\alpha)(1)=\mathsf{Hom}(p_*)(\alpha)(2)=0.$$

This proves Claim $2$.

Note that $H^2(\Sp(2g, \bZ); A)\cong\mathsf{Hom}(H_2(\Sp(2g, \bZ); \bZ), A)\cong \mathsf{Hom}(\bZ, A)\cong A$ if $g\geqslant 4$.

Now comes the final part of the proof. Let $[\tau_g]_A$ denote the image of $[\tau_g]$ in $H^2(\Sp(2g, \bZ); A)$. Suppose that there exists a class $c\in H^2(Q; A)$ such that $$p^*(c)=[\tau_g]_A.$$ By Claim $2$ we have $2p^*(c)=0$. On the other hand, $[\tau_g]_A$ is the image of $[\tau_g]=4\in \bZ\cong H^2(\Sp(2g, \bZ); \bZ)$ in $A$. So we have $$p^*(c)=4_A\in T_2(A).$$

Hence this class $c$ can provide at most the $2$-valued information $4_A$ on the signature: either it is $0$ in $A$ or not.
\end{proof}

\begin{rk}\label{Z/m}
In particular, setting $A=\bZ/N$, we find the following:
$4_N\in T_2(\bZ/N)$, and
$$T_2(\bZ/N)=
\begin{cases}
		\{0\}& \mbox{if }N\mbox{ is odd},\\
		\left\{0, \textstyle{\frac{N}{2}}\right\}& \mbox{if }N\mbox{ is even}.
		\end{cases}$$
Hence no information is obtained in this way for odd $N$. For even $N$, suppose $\textstyle{\frac{N}{2}}=4_N$ so that 
\begin{eqnarray}
\textstyle{\frac{N}{2}}=4\pmod{N}.
\end{eqnarray}
Therefore $N=8$.
\end{rk}
\begin{rk}
If $g=3$, the conclusions of Theorems \ref{cas Z/m} and \ref{cas general} still hold, with the same proof, as the only difference between $g=3$ and $g\geqslant 4$ is an additional $2$-torsion summand in the second homology of the symplectic group with integer coefficients (Lemma \ref{Lemma:co-homology computations} \eqref{H_2 Sp Z Z}) as well as a $2$-torsion factor in the universal extension of $\Sp(6,\bZ)$ (Theorem \ref{th:main} (v)).
\end{rk}

\appendix
\section{Cohomology computations}\label{app: co-homology computations}
This appendix presents computations of homology and cohomology groups often used in the main text.							
\begin{lemma}\label{Lemma:co-homology computations}						
\begin{enumerate}[\rm (i)] 
\item\label{mapping class group} The mapping class group $\Gamma_g$ of the closed oriented surface of genus $g$ has
$$H_1(\Gamma_g; \bZ)=\begin{cases}
													\bZ/10 & \mbox{if }  g=2,\\
													\{0\} & \mbox{if }  g\geq 3.
															\end{cases}$$
Indeed, the mapping class groups are perfect $\forall g\geq 3$.
							$$H_2(\Gamma_g; \bZ)=\begin{cases}
													\bZ/2 & \mbox{if }  g=2,\\
													\bZ\oplus\bZ/2 & \mbox{if }  g=3,\\
													\bZ & \mbox{if }  g\geq4.
															\end{cases}$$
															
$$H^2(\Gamma_g; \bZ)=\begin{cases}						\bZ/10 & \mbox{if }  g=2,\\
													\bZ & \mbox{if }  g\geq 3.
															\end{cases}$$
															$$H^2(\Gamma_g; \bZ/2)=\begin{cases}
													\bZ/2\oplus\bZ/2& \mbox{if } g=2, 3,\\
													\bZ/2 & \mbox{if }  g\geq4.
															\end{cases}$$
															
\item\label{H_1 Sp} $$H_1(\Sp(2g, \bZ); \bZ)=\begin{cases}		\bZ/2 & \mbox{if }  g=2,\\
													\{0\} & \mbox{if }  g\geq 3.
															\end{cases}$$
															$$H_1(\Sp(2g, \bZ/4); \bZ)=\begin{cases}
													\bZ/2 & \mbox{if }  g=2,\\
													\{0\} & \mbox{if }  g\geq 3.
															\end{cases}$$
In fact, the symplectic group $\Sp(2g, \bZ)$ and thus all of its quotients, as quotients of the mapping class group, are perfect $\forall g\geq 3$.
\item\label{H_2 Sp Z Z} $$H_2(\Sp(2g, \bZ); \bZ)=\begin{cases}        \bZ\oplus\bZ/2 & \mbox{if }  g=2, 3,\\
													\bZ & \mbox{if }  g\geq 4.
															\end{cases}$$
															
\item\label{H_2 Sp Z/4 Z} $$H_2(\Sp(2g, \bZ/4); \bZ)=\begin{cases}	\bZ/2 \oplus\bZ/2&\mbox{if } g=2, 3,\\
													\bZ/2 & \mbox{if }  g\geq 4.
															\end{cases}$$
															
\item\label{H^2 Sp Z Z} $$H^2(\Sp(2g, \bZ); \bZ)=\begin{cases}  	\bZ\oplus\bZ/2&\mbox{if }g=2,\\
													\bZ & \mbox{if }  g\geq 3.
															\end{cases}$$
															
\item\label{H^2 Sp Z Z/2} $$H^2(\Sp(2g, \bZ); \bZ/2)=\begin{cases}   \bZ/2\oplus\bZ/2\oplus\bZ/2 & \mbox{if }  g=2,\\
													\bZ/2\oplus\bZ/2 & \mbox{if }  g=3,\\
													\bZ/2& \mbox{if }  g\geq 4.
															\end{cases}$$
\item\label{H^2 Sp Z Z/8} $$H^2(\Sp(2g, \bZ); \bZ/8)=\begin{cases}	\bZ/8\oplus\bZ/2\oplus\bZ/2 & \mbox{if } g=2,\\
													\bZ/8\oplus\bZ/2  & \mbox{if }  g=3,\\
													\bZ/8& \mbox{if }  g\geq 4.
															\end{cases}$$
\item\label{H^2 Sp Z/4 Z/2} $$H^2(\Sp(2g, \bZ/4); \bZ/2)=\begin{cases}   \bZ/2\oplus\bZ/2\oplus\bZ/2& \mbox{if }  g=2,\\
												         	\bZ/2\oplus\bZ/2& \mbox{if }  g=3,\\
													         \bZ/2& \mbox{if }  g\geq 4.
															\end{cases}$$
\item\label{H^2 Sp Z/4 Z/8} $$H^2(\Sp(2g, \bZ/4); \bZ/8)=\begin{cases} \bZ/2\oplus\bZ/2 \oplus \bZ/2& \mbox{if }  g=2,\\
													\bZ/2\oplus\bZ/2 & \mbox{if }  g=3,\\
													\bZ/2& \mbox{if }  g\geq 4.
												\end{cases}$$
\item\label{H^2 Sp image r2} The image of the map $r_2\colon H^2(\Sp(2g, \bZ); \bZ)\rightarrow H^2(\Sp(2g, \bZ); \bZ/2)$ is
$$\begin{cases}
													\bZ/2\oplus\bZ/2 \oplus\{0\}& \mbox{if }  g=2,\\
													\bZ/2 \oplus\{0\} & \mbox{if }  g=3,\\
													\bZ/2& \mbox{if }  g\geq 4.
															\end{cases}$$
\item\label{H^2 Sp image r8} The image of the map $r_8\colon H^2(\Sp(2g, \bZ); \bZ)\rightarrow H^2(\Sp(2g, \bZ); \bZ/8)$ is
$$\begin{cases}
													\bZ/8\oplus\bZ/2 \oplus\{0\}& \mbox{if }  g=2,\\
													\bZ/8 \oplus\{0\} & \mbox{if }  g=3,\\
													\bZ/8& \mbox{if }  g\geq 4.
															\end{cases}$$
\item\label{H_2 map Gamma_g Sp} The map $p_*\colon H_2(\Gamma_g; \bZ)\rightarrow H_2(\Sp(2g, \bZ); \bZ)$ induced in homology by the canonical surjection $p\colon\Gamma_g\rightarrow \Sp(2g, \bZ)$ is an isomorphism for $g\geq 3$.
\item\label{H^2 map Gamma_g Sp} The map $p^*\colon H^2(\Sp(2g, \bZ); \bZ)\rightarrow H^2(\Gamma_g; \bZ)$ induced in cohomology by the canonical surjection $p\colon\Gamma_g\rightarrow \Sp(2g, \bZ)$ is an isomorphism for $g\geq 3$.
\item\label{H^2 map Gamma_g Sp Z/2} The map $p^*\colon H^2(\Sp(2g, \bZ); \bZ/2)\rightarrow H^2(\Gamma_g; \bZ/2)$ induced in cohomology by the canonical surjection $p\colon\Gamma_g\rightarrow \Sp(2g, \bZ)$ is an isomorphism for $g\geq 3$.

\end{enumerate}
\end{lemma}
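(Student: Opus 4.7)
The plan is to assemble these computations by combining classical results from the literature on the (co)homology of mapping class groups and symplectic groups with systematic applications of the universal coefficient theorem, the Hochschild--Serre spectral sequence, and the detailed calculations already carried out in the companion paper \cite{BensonCampagnoloRanickiRovi:2017a}.

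For item (\ref{mapping class group}) I would cite the classical results: $H_1(\Gamma_2;\bZ)=\bZ/10$ is due to Mumford and Birman--Hilden; $H_1(\Gamma_g;\bZ)=0$ for $g\geq 3$ is Powell's perfection theorem; $H_2(\Gamma_g;\bZ)\cong\bZ$ for $g\geq 4$ is Harer's stability theorem; the $g=3$ case with its extra $\bZ/2$ summand is due to Sakasai and Korkmaz--Stipsicz, while $H_2(\Gamma_2;\bZ)=\bZ/2$ is from Benson--Cohen / Korkmaz--Stipsicz. For items (\ref{H_1 Sp})--(\ref{H_2 Sp Z/4 Z}), the abelianization of $\Sp(2g,\bZ)$ is classical; $H_2(\Sp(2g,\bZ);\bZ)\cong\bZ$ for $g\geq 4$ is Stein's stability result, and the extra $\bZ/2$ summand for $g=2,3$ can be extracted from Sah--Wagoner-type calculations. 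For the $\bZ/4$ coefficient group I would apply the five-term exact sequence to the congruence extension $1\to\Gamma(2g,4)\to\Sp(2g,\bZ)\to\Sp(2g,\bZ/4)\to 1$, using the vanishing of $H_0(\Sp(2g,\bZ/4);H_1(\Gamma(2g,4)))$ established in \cite{BensonCampagnoloRanickiRovi:2017a} and the cohomology tables computed there.

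Items (\ref{H^2 Sp Z Z})--(\ref{H^2 Sp Z/4 Z/8}) are then purely formal consequences of the homology calculations via the universal coefficient theorem
\[ H^2(G;A) \cong \Hom(H_2(G;\bZ),A) \oplus \Ext^1(H_1(G;\bZ),A), \]
with the $\Ext^1$ contributions producing precisely the extra torsion summands in the low-genus cases where $H_1$ is nontrivial. For the images of reduction in (\ref{H^2 Sp image r2}) and (\ref{H^2 Sp image r8}), I would use the Bockstein long exact sequence of $0\to\bZ\xrightarrow{N}\bZ\to\bZ/N\to 0$: the free $\bZ$ summand of $H^2(\Sp(2g,\bZ);\bZ)$ reduces onto a $\bZ/N$ summand of the target, while torsion summands in $H^2(\Sp(2g,\bZ);\bZ)$ coming from $\Ext^1(H_1,\bZ)$ do survive to $H^2(\Sp(2g,\bZ);\bZ/N)$ but do \emph{not} lie in the image of reduction from $\bZ$ (since reduction factors through $\Hom(H_2,\bZ/N)$ up to a correction from $\Ext^1$ that must be analyzed by Bockstein).

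For the mapping-class-group comparison (\ref{H_2 map Gamma_g Sp})--(\ref{H^2 map Gamma_g Sp Z/2}), I would apply the Hochschild--Serre spectral sequence to the Torelli extension $1\to\mathcal{I}_g\to\Gamma_g\to\Sp(2g,\bZ)\to 1$. The essential input is Johnson's theorem identifying $H_1(\mathcal{I}_g;\bZ)$ as an $\Sp(2g,\bZ)$-module, from which one deduces vanishing of the relevant coinvariants. The five-term exact sequences in homology and cohomology then force $p_*$ and $p^*$ to be isomorphisms in the stated range, with the $\bZ/2$-coefficient case requiring an additional check that no transgression contributes. The main obstacle is item (\ref{H^2 Sp image r8}) in the low-genus cases $g=2,3$: identifying precisely which $\bZ/2$ summand of the target lies in the image of reduction requires choosing explicit cocycle representatives (for instance the Meyer cocycle) rather than merely knowing the abstract group structure, and it is here that the diagram chases of Theorem \ref{th:main} become essential.
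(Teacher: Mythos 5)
Your outline tracks the paper closely for items (\ref{mapping class group})--(\ref{H^2 Sp Z/4 Z/8}): the low-degree (co)homology of $\Gamma_g$ comes from the literature (Korkmaz, Farb--Margalit, Sakasai) exactly as you suggest, the $g\geq 3$ values for $\Sp(2g,\bZ)$ and $\Sp(2g,\bZ/4)$ come from \cite{BensonCampagnoloRanickiRovi:2017a}, and items (\ref{H^2 Sp Z Z})--(\ref{H^2 Sp Z/4 Z/8}) are universal coefficients as you say. One small difference: the paper obtains the $g=2$ values of $H_2(\Sp(4,\bZ);\bZ)$ and $H_2(\Sp(4,\bZ/4);\bZ)$ by direct GAP computation, not from Sah--Wagoner or a five-term argument; the latter only yields surjectivity statements and does not determine the groups without extra input.

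Your treatment of items (\ref{H^2 Sp image r2})--(\ref{H^2 Sp image r8}) contains a genuine error. You assert that the torsion in $H^2(\Sp(2g,\bZ);\bZ)$ coming from $\Ext^1(H_1,\bZ)$ ``does not lie in the image of reduction from $\bZ$.'' That is backwards. Taking the free resolution $0\to\bZ\xrightarrow{2}\bZ\to\bZ/2\to 0$ of $H_1(\Sp(4,\bZ);\bZ)=\bZ/2$ and applying $\Hom(-,\bZ)\to\Hom(-,\bZ/N)$ shows that the induced map $\Ext^1(\bZ/2,\bZ)\to\Ext^1(\bZ/2,\bZ/N)$ is \emph{surjective}, so this $\bZ/2$ summand \emph{is} in the image of $r_N$. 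The summand that is \emph{not} hit is the factor $\Hom(\bZ/2,\bZ/N)\subset\Hom(H_2(\Sp(4,\bZ);\bZ),\bZ/N)$ coming from the $\bZ/2$-torsion in $H_2$, precisely because $\Hom(\bZ/2,\bZ)=0$ already kills it in the source. If you follow your stated reasoning you get the wrong image (only one $\bZ/N$ summand for $g=2,3$, whereas the correct answer has an extra $\bZ/2$). The paper does this via naturality of the universal coefficient theorem and the explicit free resolution; the Bockstein sequence you invoke would additionally require knowledge of $H^3$ and is not what is used.

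For items (\ref{H_2 map Gamma_g Sp})--(\ref{H^2 map Gamma_g Sp Z/2}) your proposal via the Torelli extension and Johnson's theorem is a genuinely different route from the paper's. The paper instead combines Meyer's Satz 2 (the composite $H^2(\Sp(2g,\bZ);\bZ)\to\Hom(H_2(\Gamma_g;\bZ),\bZ)$ sends $[\tau_g]$ to a map with image $4\bZ$) with Barge--Ghys's identification $[\tau_g]=4c_1$, forcing $p_*$ and $p^*$ to be isomorphisms on the free parts; the $g=3$ torsion is then handled using Sakasai and Korkmaz--Stipsicz. Your sketch is plausible in outline but is not carried out: you still need to verify vanishing of the relevant $\Sp(2g,\bZ)$-coinvariants, and for $g=3$ surjectivity of $p_*$ between two groups abstractly isomorphic to $\bZ\oplus\bZ/2$ does not alone give injectivity without the order comparison you would need to make explicit. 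Finally, your closing suggestion to invoke ``the diagram chases of Theorem~\ref{th:main}'' for item (\ref{H^2 Sp image r8}) would be circular: Theorem~\ref{th:main} cites this Lemma (via Lemma~\ref{ix: reduction mod 8} and parts (\ref{H^2 Sp Z Z/8}), (\ref{H^2 Sp Z/4 Z/8}), (\ref{H^2 Sp image r8})) repeatedly in its proof.
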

\begin{proof}
\begin{enumerate}
\item[\rm (i)]
For $g=2$, we have $H_1(\Gamma_2; \bZ)=\bZ/10$ (see for example \cite[Theorem 5.1]{korkmaz}). For $H_2(\Gamma_2; \bZ)$ see \cite[after Theorem 6.1]{korkmaz}.
The universal coefficient theorem and the properties of $\mathsf{Ext}$ allow to easily compute $H^2(\Gamma_2; \bZ)$ and $H^2(\Gamma_2; \bZ/2)$.

For $g\geq 3$, we have $H_1(\Gamma_g; \bZ)=\{0\}$ (see for example \cite[Theorem 5.2]{Farb/Margalit:2012a}). The group $H_2(\Gamma_3; \bZ)$ is computed in \cite[Corollary 4.10]{sakasai}. For $g\geq 4$, we have $H_2(\Gamma_g; \bZ)=\bZ$ (see for example \cite[Theorem 6.1]{korkmaz}).
From that we easily obtain $H^2(\Gamma_g; \bZ)$ and $H^2(\Gamma_g; \bZ/2)$ using the universal coefficient theorem.
\item[\rm (ii)] The symplectic group $\Sp(2g, \bZ)$, and thus all of its quotients, is a quotient of the mapping class group $\Gamma_g$, which is perfect for $g\geq 3$ (see for example \cite[Theorems 5.2, 6.4]{Farb/Margalit:2012a}).

For $g=2$, one can abelianize the presentation of $\Sp(4, \bZ)$ given in \cite{bender} or \cite[Theorem 2]{lu}.

For $\Sp(4, \bZ/4)$, one can compute it using GAP.
\item[\rm (iii)] See for example \cite[Lemma 6.11]{BensonCampagnoloRanickiRovi:2017a} for $g\geq3$.

For $g=2$ it was computed using GAP.
\item[\rm (iv)] See for example \cite[Lemma 6.12]{BensonCampagnoloRanickiRovi:2017a} for $g\geq 3$.

For $g=2$, it can be computed using GAP.
\item[\rm (v)] For $g\geq3$, perfection of $\Sp(2g, \bZ)$ together with the universal coefficient theorem gives an isomorphism
$$
\xymatrix
{
H^2(\Sp(2g,\bZ); \bZ)\ar[r]^-\cong &\mathsf{Hom}(H_2(\Sp(2g, \bZ); \bZ), \bZ).
}
$$
Hence using \eqref{H_2 Sp Z Z} we obtain the stated cohomology group.

For $g=2$, the universal coefficient theorem gives a split exact sequence
$$
\xymatrix
{
\mathsf{Ext}^1_{\bZ}(H_1(\Sp(4,\bZ); \bZ), \bZ)\ar[r]& H^2(\Sp(4,\bZ); \bZ)\ar[r]&\mathsf{Hom}(H_2(\Sp(4, \bZ); \bZ), \bZ).
}
$$
Using \eqref{H_1 Sp}, \eqref{H_2 Sp Z Z} together with the properties of $\mathsf{Ext}$, one obtains $H^2(\Sp(4, \bZ); \bZ)\cong \bZ\oplus\bZ/2$ as stated.
\item[\rm (vi)] For $g\geq3$, perfection of $\Sp(2g, \bZ)$ together with the universal coefficient theorem gives an isomorphism
$$
\xymatrix
{
H^2(\Sp(2g,\bZ); \bZ/2)\ar[r]^-\cong &\mathsf{Hom}(H_2(\Sp(2g, \bZ); \bZ), \bZ/2).
}
$$
Hence using \eqref{H_2 Sp Z Z} we obtain the stated cohomology groups.

For $g=2$, the universal coefficient theorem yields the split exact sequence
$$
\xymatrix
{
\mathsf{Ext}^1_{\bZ}(H_1(\Sp(2g,\bZ); \bZ), \bZ/2)\ar[r]& H^2(\Sp(2g,\bZ); \bZ/2)\ar[r]&\mathsf{Hom}(H_2(\Sp(2g, \bZ); \bZ), \bZ/2).
}
$$
Using \eqref{H_1 Sp} and \eqref{H_2 Sp Z Z} and the properties of $\mathsf{Ext}$ we obtain the required results.
\item[\rm (vii)] For $g\geq3$, perfection of $\Sp(2g, \bZ)$ together with the universal coefficient theorem gives an isomorphism
$$
\xymatrix
{
H^2(\Sp(2g,\bZ); \bZ/8)\ar[r]^-\cong &\mathsf{Hom}(H_2(\Sp(2g, \bZ); \bZ), \bZ/8).
}
$$
Hence using \eqref{H_2 Sp Z Z} we obtain the stated cohomology groups.

For $g=2$, the universal coefficient theorem yields the split exact sequence
$$
\xymatrix
{
\mathsf{Ext}^1_{\bZ}(H_1(\Sp(2g,\bZ); \bZ), \bZ/8)\ar[r]& H^2(\Sp(2g,\bZ); \bZ/8)\ar[r]&\mathsf{Hom}(H_2(\Sp(2g, \bZ); \bZ), \bZ/8).
}
$$
Using \eqref{H_1 Sp} and \eqref{H_2 Sp Z Z} and the properties of $\mathsf{Ext}$ we obtain the required results.
\item[\rm (viii)] For $g\geq3$, perfection of $\Sp(2g, \bZ/4)$ together with the universal coefficient theorem gives an isomorphism
$$
\xymatrix
{
H^2(\Sp(2g,\bZ/4); \bZ/2)\ar[r]^-\cong &\mathsf{Hom}(H_2(\Sp(2g, \bZ/4); \bZ), \bZ/2).
}
$$
Hence using \eqref{H_2 Sp Z/4 Z} we obtain the stated cohomology groups.

For $g=2$, the universal coefficient theorem yields the split exact sequence
$$
\xymatrix
{
\mathsf{Ext}^1_{\bZ}(H_1(\Sp(2g,\bZ/4); \bZ), \bZ/2)\ar[r]& H^2(\Sp(2g,\bZ/4); \bZ/2)\ar[r]&\mathsf{Hom}(H_2(\Sp(2g, \bZ/4); \bZ), \bZ/2).
}
$$ 
Points \eqref{H_1 Sp} and \eqref{H_2 Sp Z/4 Z} and the properties of the $\mathsf{Ext}$-functor yield the stated results.
\item[\rm (ix)] For $g\geq3$, perfection of $\Sp(2g, \bZ/4)$ together with the universal coefficient theorem gives an isomorphism
$$
\xymatrix
{
H^2(\Sp(2g,\bZ/4); \bZ/8)\ar[r]^-\cong &\mathsf{Hom}(H_2(\Sp(2g, \bZ/4); \bZ), \bZ/8).
}
$$
Hence using \eqref{H_2 Sp Z/4 Z} we obtain the stated cohomology groups.

For $g=2$, the universal coefficient theorem yields the split exact sequence
$$
\xymatrix
{
\mathsf{Ext}^1_{\bZ}(H_1(\Sp(2g,\bZ/4); \bZ), \bZ/8)\ar[r]& H^2(\Sp(2g,\bZ/4); \bZ/8)\ar[r]&\mathsf{Hom}(H_2(\Sp(2g, \bZ/4); \bZ), \bZ/8).
}
$$ 
Points \eqref{H_1 Sp} and \eqref{H_2 Sp Z/4 Z} and the properties of the $\mathsf{Ext}$-functor yield the stated results.
\item[\rm (x)] Naturality of the universal coefficient theorem and perfection of $\Sp(2g,\bZ)$ for $g\geq3$ give a commutative diagram
$$
\xymatrix
{
H^2(\Sp(2g,\bZ); \bZ)\ar[r]^-\cong\ar[d]^{r_2} &\mathsf{Hom}(H_2(\Sp(2g, \bZ); \bZ), \bZ)\ar[d]\phantom{.}\\
H^2(\Sp(2g,\bZ); \bZ/2)\ar[r]^-\cong& \mathsf{Hom}(H_2(\Sp(2g, \bZ); \bZ), \bZ/2).
}
$$
For $g=2$, the universal coefficient theorem yields the split exact sequence
$$
\xymatrix
{
\mathsf{Ext}^1_{\bZ}(H_1(\Sp(4,\bZ); \bZ), \bZ)\ar[r]\ar[d]& H^2(\Sp(4,\bZ); \bZ)\ar[r]\ar[d]^{r_2}&\mathsf{Hom}(H_2(\Sp(4, \bZ); \bZ), \bZ)\ar[d]\phantom{.}\\
\mathsf{Ext}^1_{\bZ}(H_1(\Sp(4,\bZ); \bZ), \bZ/2)\ar[r]& H^2(\Sp(4,\bZ); \bZ/2)\ar[r]&\mathsf{Hom}(H_2(\Sp(4, \bZ); \bZ), \bZ/2).
}
$$
It is easy to understand the map induced by $r_2$ on the $\mathsf{Hom}$-part: it is just the concatenation with the reduction morphism on the coefficients $\bZ\rightarrow\bZ/2$.

For the $\mathsf{Ext}$-part, consider the projective resolution of $H_1(\Sp(4, \bZ), \bZ)=\bZ/2$ given by 
$$0\longrightarrow \bZ\stackrel{\cdot 2}{\longrightarrow} \bZ\longrightarrow \bZ/2\longrightarrow 0.$$
Apply $\mathsf{Hom}$ to obtain
\[
\xymatrix
{
0& \mathsf{Hom}(\bZ, \bZ)\ar[l]\ar[d]^r& \mathsf{Hom}(\bZ, \bZ)\ar[d]\ar[l]_-{\cdot 2}\phantom{.}\\
0& \mathsf{Hom}(\bZ, \bZ/2)\ar[l]& \mathsf{Hom}(\bZ, \bZ/2)\ar[l]_-{\cdot 2}.
}
\]
The groups we are interested in, $\mathsf{Ext}^1_{\bZ}(H_1(\Sp(4,\bZ); \bZ), \bZ)$ and $\mathsf{Ext}^1_{\bZ}(H_1(\Sp(4,\bZ); \bZ), \bZ/2)$, are the cohomology groups of this part of the chain complexes. We notice that the map induced in cohomology is surjective, as the map $r$ is.

The map $r_2$ thus has image $$\begin{cases}
													\bZ/2\oplus\bZ/2 \oplus\{0\}& \mbox{if }  g=2,\\
													\bZ/2 \oplus\{0\}& \mbox{if }  g=3,\\
													\bZ/2 & \mbox{if }  g\geq 4.
															\end{cases}$$
\item[\rm (xi)] Naturality of the universal coefficient theorem and perfection of $\Sp(2g,\bZ)$ for $g\geq3$ give a commutative diagram
$$
\xymatrix
{
H^2(\Sp(2g,\bZ); \bZ)\ar[r]^-\cong\ar[d]^{r_8} &\mathsf{Hom}(H_2(\Sp(2g, \bZ); \bZ), \bZ)\ar[d]\phantom{.}\\
H^2(\Sp(2g,\bZ); \bZ/8)\ar[r]^-\cong& \mathsf{Hom}(H_2(\Sp(2g, \bZ); \bZ), \bZ/8).
}
$$
For $g=2$, the  universal coefficient theorem yields the split exact sequence
$$
\xymatrix
{
\mathsf{Ext}^1_{\bZ}(H_1(\Sp(4,\bZ); \bZ), \bZ)\ar[r]\ar[d]& H^2(\Sp(4,\bZ); \bZ)\ar[r]\ar[d]^{r_8}&\mathsf{Hom}(H_2(\Sp(4, \bZ); \bZ), \bZ)\ar[d]\phantom{.}\\
\mathsf{Ext}^1_{\bZ}(H_1(\Sp(4,\bZ); \bZ), \bZ/8)\ar[r]& H^2(\Sp(4,\bZ); \bZ/8)\ar[r]&\mathsf{Hom}(H_2(\Sp(4, \bZ); \bZ), \bZ/8).
}
$$
It is easy to understand the map induced by $r_8$ on the $\mathsf{Hom}$-part: it is just the concatenation with the reduction morphism on the coefficients $\bZ\rightarrow\bZ/8$.

For the $\mathsf{Ext}$-part, consider the projective resolution of $H_1(\Sp(4, \bZ); \bZ)=\bZ/2$ given by 
$$0\longrightarrow \bZ\stackrel{\cdot 2}{\longrightarrow} \bZ\longrightarrow \bZ/2\longrightarrow 0.$$
Apply $\mathsf{Hom}$ to obtain
\[
\xymatrix
{
0& \mathsf{Hom}(\bZ, \bZ)\ar[l]\ar[d]^r& \mathsf{Hom}(\bZ, \bZ)\ar[d]\ar[l]_-{\cdot 2}\phantom{.}\\
0& \mathsf{Hom}(\bZ, \bZ/8)\ar[l]& \mathsf{Hom}(\bZ, \bZ/8)\ar[l]_-{\cdot 2}.
}
\]
The groups we are interested in, $\mathsf{Ext}^1_{\bZ}(H_1(\Sp(4,\bZ); \bZ), \bZ)$ and $\mathsf{Ext}^1_{\bZ}(H_1(\Sp(4,\bZ); \bZ), \bZ/8)$, are the cohomology groups of this part of the chain complexes. We notice that the map induced in cohomology is surjective, as the map $r$ is.

The map $r_8$ thus has image $$\begin{cases}
													\bZ/2\oplus\bZ/8 \oplus\{0\}& \mbox{if }  g=2,\\
													\bZ/8 \oplus\{0\}& \mbox{if }  g=3,\\
													\bZ/8 & \mbox{if } g\geq 4.
															\end{cases}$$
\item[\rm (xii)]
Meyer shows in \cite[Satz 2]{Meyer:1973a} that the image of the cocycle $\tau_g$ under the composition of 
\[
\xymatrix
{
 \mathsf{Hom}_\bZ(H_2(\Sp(2g, \bZ); \bZ), \bZ)\cong H^2(\Sp(2g, \bZ); \bZ)\ar[r]& H^2(\Gamma_g; \bZ)\ar[r]^-\cong&\mathsf{Hom}_\bZ(H_2(\Gamma_g; \bZ), \bZ)
}
\] is a map $\hat{k}\in \mathsf{Hom}_\bZ(H_2(\Gamma_g; \bZ), \bZ) $ with image $4\bZ$. On the other hand, Barge-Ghys show in \cite[Proposition 3.1, p. 258]{barge-ghys} that $\tau_g$ is four times the cocycle defined by the central extension $\widetilde{\Sp(2g, \bZ)}$, the restriction of the universal cover $\widetilde{\Sp(2g, \bR)}$ of $\Sp(2g, \bR)$ to $\Sp(2g, \bZ)$. This means that $\tau_g$ is four times the generator of $H^2(\Sp(2g, \bZ); \bZ)$, which maps to four times the generator of $H^2(\Gamma_g; \bZ)$. Now this means that the restriction of $p_*$ to the $\bZ$-summand of $H_2(\Gamma_g; \bZ)$ is an isomorphism to the $\bZ$-summand of $H_2(\Sp(2g, \bZ); \bZ)$.

The case $g\geq 4$ is thus settled. For $g=3$, we use Theorem 4.9, Corollary 4.10 and the paragraph before in \cite{sakasai}, together with \cite[Proposition 1.5]{Korkmaz/Stipsicz:2003a}: the map $H_2(\Gamma_{3, 1}; \bZ)\rightarrow H_2(\Gamma_3; \bZ)$ is surjective, thus an isomorphism, and this implies that the $\bZ/2$-summand of $H_2(\Gamma_3; \bZ)$ is sent isomorphically by $p_*$ to the $\bZ/2$-summand of $H_2(\Sp(6, \bZ); \bZ)$. The proof of the isomorphism is now complete.
\item[\rm (xiii)]
Meyer shows in \cite[Satz 2]{Meyer:1973a} that the image of the cocycle $\tau_g$ under the composition of 
\[
\xymatrix
{
H^2(\Sp(2g, \bZ); \bZ)\ar[r]& H^2(\Gamma_g; \bZ)\ar[r]&\mathsf{Hom}_\bZ(H_2(\Gamma_g; \bZ), \bZ)
}
\] is a map $\hat{k}\in \mathsf{Hom}_\bZ(H_2(\Gamma_g; \bZ), \bZ) $ with image $4\bZ$. On the other hand, Barge-Ghys show in \cite[Proposition 3.1, p. 258]{barge-ghys} that $\tau_g$ is four times the cocycle defined by the central extension $\widetilde{\Sp(2g, \bZ)}$, the restriction of the universal cover $\widetilde{\Sp(2g, \bR)}$ of $\Sp(2g, \bR)$ to $\Sp(2g, \bZ)$. This means that $\tau_g$ is four times the generator of $H^2(\Sp(2g, \bZ); \bZ)$, which maps to four times the generator of $H^2(\Gamma_g; \bZ)$, showing that $p^*$ is an isomorphism.
\item[\rm (xiv)]
We showed above that $p_*\colon H_2(\Gamma_g; \mathbb{Z})\rightarrow H_2(\Sp(2g, \bZ); \bZ)$ is an isomorphism for $g\geq 3$. The universal coefficient theorem and perfection of $\Gamma_g$ and $\Sp(2g, \bZ)$ yield a commutative diagram
\[
\xymatrix
{
H^2(\Sp(2g, \bZ); \bZ/2)\ar[r]^-\cong\ar[d]^{p^*} & \mathsf{Hom}(H_2(\Sp(2g, \bZ); \bZ), \bZ/2)\ar[d]^{\mathsf{Hom}(p_*)}\phantom{.}\\
 H^2(\Gamma_g; \bZ/2)\ar[r]^-\cong &\mathsf{Hom}(H_2(\Gamma_g; \bZ), \bZ/2).
}
\]
The left vertical arrow must therefore be an isomorphism as well.

For $g=2$, recall that by \eqref{mapping class group} and \eqref{H^2 Sp Z Z/2} the map $p^*$ cannot possibly be an isomorphism. We use the following commutative diagram to study its kernel:
$$
\xymatrix
{
H^2(\Sp(4, \bZ); \bZ)\ar[r]^{r_2}\ar[d]^{p^*}&H^2(\Sp(4, \bZ); \bZ/2)\ar[d]^{p^*}\phantom{.}\\
H^2(\Gamma_2; \bZ)\ar[r] & H^2(\Gamma_2; \bZ/2).
}
$$
The class $[\tau_2]\in H^2(\Sp(4, \bZ); \bZ)$ has non-zero image $$r_2([\tau_2])=[\tau_2]\pmod 2\in H^2(\Sp(4, \bZ); \bZ/2)$$
by \eqref{H^2 Sp image r2}, from which it follows also that this image has order $2$. By \cite[Satz 2]{Meyer:1973a}, the order of $p^*([\tau_2])$ is $5$ in $H^2(\Gamma_2;\bZ)$. But then its image in $H^2(\Gamma_2; \bZ/2)$ has to be $0$, by \eqref{mapping class group}. So $p^*\colon H^2(\Sp(4, \bZ); \bZ/2)\rightarrow H^2(\Gamma_2; \bZ/2)$ is not injective and has $r_2([\tau_2])$ in its kernel. \qedhere
\end{enumerate}
\end{proof}


\section{Werner Meyer (by Winfried Scharlau)}\label{app: biography}

We are grateful to Winfried Scharlau for allowing us to publish here his photo of Meyer, and the following English translation of an extract from his 2017 biography of Hirzebruch, \cite{Scharlau:2017a}.

\bigskip

\begin{minipage}{0.35\textwidth}
\centering
\includegraphics[width=5cm]{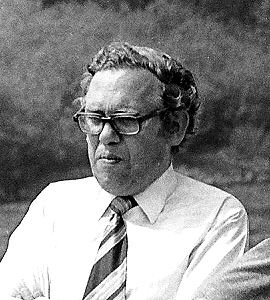}
\end{minipage}
\begin{minipage}{0.62\textwidth}

\medskip
Werner Meyer (1929--1991) was disabled; he was born with a cleft palate, which was difficult to operate on. His speech difficulties ruled out an academic career involving teaching. His parents were Jehovah's Witnesses: they were persecuted by the Nazis, and killed in the Ravensbr\"uck concentration camp in 1944.
Werner Meyer's brother was executed during the war, on account of being a conscientious objector. 
All these circumstances led to Meyer not getting the education his talents deserved.
\end{minipage}

\medskip

When Meyer started working at the Max Planck Institute for Mathematics in June 1983 he submitted the following autobiographical sketch. 

\textit{After the war, early in 1948, I completed an apprenticeship as an electrical fitter.  After that I worked for several companies as a fitter, a locksmith and a technical draftsman. From 1959 to 1961 I attended the local technical college in Rheydt and obtained a high school diploma on 31.3.1961. 
From the spring semester 1962 to the spring semester 1963 I studied mechanical engineering at the state school of Engineering in Aachen, taking the preliminary examination for a higher degree in engineering. In order to  become a university student I applied for a scholarship in Duesseldorf, which I obtained on 9th October 1963.} 

One should add that Werner Meyer was in a sense ``discovered" by Wilhelm Junkers, a mathematician who later became a professor in Duisburg. 
Junkers was once visiting a firm, when some of Meyer's colleagues  told Junkers about that eccentric who was always working on mathematics. Junkers made sure that Meyer could catch up on his high school diploma at evening classes and that he would enroll in engineering studies in Aachen. It was also Junkers who introduced Meyer to Hirzebruch so that Meyer could study mathematics in Bonn from the winter semester 1963/64. It soon became clear that he was extraordinarily gifted. He had very broad interests and acquired an extensive mathematical knowledge. He remains in the memories of many students and collaborators as being especially knowledgeable, competent and helpful. Meyer obtained his doctorate in 1971 under Hirzebruch's supervision and was awarded (jointly with Zagier) the  Hausdorff Memorial Prize for his dissertation.

After that he stayed on in Bonn under Hirzebruch's protective wing. He obtained long term positions at the Sonderforschungsbereich and later at the Max Planck Institute. 
In 1979 he obtained the ``Habilitation" with his work on ``Signature defects, Teichm\"uller groups and hyperelliptic fibrations" 
. 
In 1986 he was appointed by Bonn University as adjunct professor. The last years before his premature death  were overshadowed by ill health  (kidney failure) -- Mathematics was without doubt his greatest happiness in life.

 On 15.1.1993  a memorial colloquium took place in Meyer's honor. Hirzebruch gave a speech, talking extensively about Meyer's life and gave a lecture about his work on the signature of fibre bundles based on the paper ``Die Signatur von Fl\"achenb\"undeln". Wilhelm Plesken gave another lecture about ``Constructive methods in representation theory".

In his report for the Max Planck Society for the period from October 1991 to September 1994 Hirzebruch  mentioned Meyer on the very first page:

\textit{``He was able to go deep into the matter. In some lectures and seminars about new research I used to set problems that I couldn't solve myself, promising a bottle of wine to whoever could give a solution. Many times the prize went to Werner Meyer. With his great knowledge he was an outstanding advisor for Masters and Doctoral candidates. He helped many students even if they were not writing their dissertation officially with him."}




\subsection*{Acknowledgements}
The authors thank Christophe Pittet for a helpful correspondence about Theorem \ref{th:Borel}. We also want to thank the anonymous referee for providing very useful comments on this paper.
Campagnolo acknowledges support by the Swiss National Science Foundation, grant number PP00P2-128309/1, under which this research was started; Campagnolo and Rovi acknowledge support by the Deutsche Forschungsgemeinschaft (DFG, German Research Foundation) – 281869850 (RTG 2229), under which this research was continued and completed.

\medskip 

The writing of this paper was overshadowed by the death of Andrew Ranicki in February 2018. With deep sorrow, we dedicate this work to his memory.







\end{document}